\documentclass{article}

\usepackage[margin=25mm]{geometry}
\usepackage{amsmath,amssymb,amsthm,mathtools,bm}
\usepackage{booktabs}
\usepackage{enumitem}
\usepackage{hyperref}
\usepackage[nameinlink,noabbrev]{cleveref}

\hypersetup{
  colorlinks=true,
  linkcolor=blue,
  citecolor=blue,
  urlcolor=blue,
  pdftitle={Multivariate P- and Q-polynomial structures of generalized Johnson association schemes},
  pdfauthor={Hirotake Kurihara},
  pdfsubject={Association schemes, multivariate P- and Q-polynomial structures, eigenmatrices, orthogonality}
}

\numberwithin{equation}{section}
\newtheorem{theorem}{Theorem}[section]
\newtheorem*{maintheorem}{Theorem}
\newtheorem{proposition}[theorem]{Proposition}
\newtheorem{lemma}[theorem]{Lemma}

\theoremstyle{definition}
\newtheorem{definition}[theorem]{Definition}
\newtheorem{remark}[theorem]{Remark}

\crefname{theorem}{Theorem}{Theorems}
\crefname{proposition}{Proposition}{Propositions}
\crefname{lemma}{Lemma}{Lemmas}
\crefname{corollary}{Corollary}{Corollaries}
\crefname{definition}{Definition}{Definitions}
\crefname{remark}{Remark}{Remarks}
\crefname{example}{Example}{Examples}
\crefname{section}{Section}{Sections}
\crefname{subsection}{Section}{Sections}
\Crefname{theorem}{Theorem}{Theorems}
\Crefname{proposition}{Proposition}{Propositions}
\Crefname{lemma}{Lemma}{Lemmas}
\Crefname{corollary}{Corollary}{Corollaries}
\Crefname{definition}{Definition}{Definitions}
\Crefname{remark}{Remark}{Remarks}
\Crefname{example}{Example}{Examples}
\Crefname{section}{Section}{Sections}
\Crefname{subsection}{Section}{Sections}
\AtBeginDocument{%
}

\newcommand{\frakX}{\mathfrak{X}}
\newcommand{\frakJ}{\mathfrak{J}}
\newcommand{\cA}{\mathcal{A}}
\newcommand{\cB}{\mathcal{B}}
\newcommand{\cH}{\mathcal{H}}
\newcommand{\NN}{\mathbb{N}}
\newcommand{\RR}{\mathbb{R}}
\newcommand{\CC}{\mathbb{C}}
\newcommand{\one}{\bm{1}}
\newcommand{\balpha}{\boldsymbol{\alpha}}
\newcommand{\bbeta}{\boldsymbol{\beta}}
\newcommand{\bgamma}{\boldsymbol{\gamma}}

\newcommand{\ba}{\boldsymbol{a}}
\newcommand{\bb}{\boldsymbol{b}}
\newcommand{\bzero}{\boldsymbol{0}}
\newcommand{\be}{\boldsymbol{e}}
\newcommand{\bu}{\boldsymbol{u}}
\newcommand{\widealpha}{\widehat{\boldsymbol{\alpha}}}

\newcommand{\eps}{\boldsymbol{\varepsilon}}
\newcommand{\pr}{\operatorname{pr}}
\newcommand{\Span}{\operatorname{Span}}
\newcommand{\domn}{\operatorname{dom}}
\newcommand{\rank}{\operatorname{rank}}
\newcommand{\im}{\operatorname{Im}}

\newcommand{\grlex}{\mathrm{grlex}}
\newcommand{\lex}{\mathrm{lex}}

\title{Multivariate $P$- and $Q$-polynomial structures of generalized Johnson association schemes}

\author{Hirotake Kurihara\thanks{Department of Applied Science, Yamaguchi University,
2-16-1 Tokiwadai, Ube 755-8611, Japan.
Email: \href{mailto:kurihara-hiro@yamaguchi-u.ac.jp}{\texttt{kurihara-hiro@yamaguchi-u.ac.jp}}.}}
\date{}

\begin{document}
\maketitle

\begin{abstract}
Let $\mathfrak X$ be a commutative association scheme of class $d$ on $X$. We construct the generalized Johnson association scheme $\mathfrak J_{n,h}(\mathfrak X)$ on partial maps from $[n]$ to $X$ with domains of size $h$. For $1\le h\le n-1$, we prove that $\mathfrak J_{n,h}(\mathfrak X)$ has $(d+1)$-variate $P$- and $Q$-polynomial structures.
In particular, its multivariate $Q$-polynomial property answers affirmatively a problem posed for
generalized Johnson association schemes arising from finite Gelfand pairs.
We also determine the primitive idempotents and the two eigenmatrices,
expressing the first eigenmatrix as a product of an Eberlein polynomial and a multivariate
Krawtchouk polynomial, and the second eigenmatrix as a product of a Hahn polynomial and
a multivariate Krawtchouk polynomial.
For the multivariate factors, we use Mizukawa--Tanaka
$(d+1,2d+2)$-hypergeometric functions.
In an appendix, we identify the normalized columns of the second eigenmatrix with the known spherical functions in the finite Gelfand pair case.
We also discuss the orthogonality relations and the imprimitivity arising from partial maps
with the same domain, and explicitly describe the multivariate $P$- and $Q$-polynomial structures
inherited by the blocks and the quotient.
\end{abstract}

\medskip
\noindent\textbf{Keywords and phrases.}
Association schemes; generalized Johnson schemes; multivariate $P$- and $Q$-polynomial structures;
eigenmatrices; multivariate Krawtchouk polynomials; Hahn polynomials.

\smallskip
\noindent\textbf{2020 Mathematics Subject Classification.}
Primary 05E30; Secondary 33C50, 33C45.

\section{Introduction}\label{sec:introduction}

Throughout this paper, let $\NN=\{0,1,2,\ldots\}$.
For $n\in\NN$, write $[n]=\{1,2,\ldots,n\}$, and set $[0]=\varnothing$.
Let $\frakX=(X,\mathcal R,\{0,1,\ldots ,d\})$ be a commutative
association scheme of class $d$, and put $v=|X|$.
For terminology on association schemes, see \cref{subsec:association-schemes}.
Let $n$ be a positive integer and $0\le h\le n$.
A \emph{partial map} $\theta\colon[n]\rightharpoonup X$ is a map whose domain is a subset of $[n]$
and whose values lie in $X$. Denote its domain by $\domn\theta$, and define
\[
 \Theta
 =\bigl\{\theta\colon[n]\rightharpoonup X:|\domn\theta|=h\bigr\}.
\]
Thus, $\Theta$ is the set of all partial maps whose domains have size $h$.
In particular, $|\Theta|=\binom nh v^h$.
For $\theta,\eta\in\Theta$, define
\begin{equation}\label{eq:t-definition}
 t(\theta,\eta)=h-|\domn\theta\cap\domn\eta|.
\end{equation}
Also, for $0\le i\le d$, define
\[
\alpha_i(\theta,\eta)
 =\bigl|\{a\in\domn\theta\cap\domn\eta:(\theta(a),\eta(a))\in R_i\}\bigr|.
\]
Since $\{R_0,\ldots,R_d\}$ is a partition of $X\times X$, we have
\begin{equation}\label{eq:alpha-sum}
 \sum_{i=0}^{d}\alpha_i(\theta,\eta)=h-t(\theta,\eta).
\end{equation}

For $\balpha=(\alpha_1,\ldots,\alpha_d)\in\NN^d$, write
$|\balpha|=\sum_{i=1}^{d}\alpha_i$, and use $(t,\balpha)$ as an abbreviation for
$(t,\alpha_1,\ldots,\alpha_d)\in\NN^{d+1}$.
Define the index set
\begin{equation}\label{eq:D}
 \Lambda
 =\bigl\{(t,\balpha)\in\NN^{d+1}:
 0\le t\le n-h,\quad t+|\balpha|\le h\bigr\}.
\end{equation}

\begin{definition}\label[definition]{def:generalized-relation}
Define $\mathcal{S}\colon \Theta\times\Theta\longrightarrow \Lambda$ by
\[
 \mathcal{S}(\theta,\eta)
 =\bigl(t(\theta,\eta),
       \alpha_1(\theta,\eta),\ldots,\alpha_d(\theta,\eta)\bigr).
\]
For $(t,\balpha)\in \Lambda$, put $R_{t,\balpha}
 =\mathcal{S}^{-1}(t,\balpha)$, and let $A_{t,\balpha}$ be its adjacency matrix.
\end{definition}

Put $\frakJ_{n,h}(\frakX)=\bigl(\Theta,\mathcal S,\Lambda\bigr)$.
The purpose of this paper is to prove the following.
\begin{maintheorem}
The following hold.
\begin{enumerate}[label=\textup{(\roman*)},ref=\textup{(\roman*)}]
\item
$\frakJ_{n,h}(\frakX)$ is a commutative association scheme.
\textup{(\cref{thm:scheme} \ref{item:scheme-association})}
\item
Suppose that $1\le h\le n-1$. Then $\frakJ_{n,h}(\frakX)$ has a $(d+1)$-variate
$P$-polynomial structure. \textup{(\cref{thm:scheme} \ref{item:scheme-P})}
\item
Suppose that $1\le h\le n-1$. Then $\frakJ_{n,h}(\frakX)$ has a $(d+1)$-variate
$Q$-polynomial structure. \textup{(\cref{thm:Q})}
\end{enumerate}
\end{maintheorem}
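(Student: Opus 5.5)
We prove (i) directly from the wreath/symmetric structure, and obtain (ii) and (iii) from recursions for the adjacency matrices and for suitable idempotent bases. The guiding picture is that $\frakJ_{n,h}(\frakX)$ sits inside the extended scheme on total maps $[n]\to X\sqcup\{\ast\}$. Here $\ast$ marks ``undefined'' positions, and the relation between two positions is refined by whether each entry is $\ast$.

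\textbf{Step (i).} The relations $R_{t,\balpha}$ partition $\Theta\times\Theta$, $R_{0,\bzero}$ is the diagonal, and $R_{t,\balpha}^{\top}=R_{t,\balpha'}$, where $\balpha'$ permutes indices by $i\mapsto i'$. To get closure and commutativity, fix $(\theta,\eta)$ with $\mathcal S(\theta,\eta)=(t,\balpha)$ and count $\zeta$ with $\mathcal S(\theta,\zeta)=(s,\bbeta)$ and $\mathcal S(\zeta,\eta)=(u,\bgamma)$. Positions $a\in[n]$ split into classes according to membership of $a$ in $\domn\theta$, $\domn\eta$, and the relation index of $(\theta(a),\eta(a))$; the class sizes are determined by $(t,\balpha)$ and $h$. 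For each class we choose how many positions go into $\domn\zeta$. For each chosen position, the number of admissible values $\zeta(a)$ with prescribed relations to $\theta(a)$ and/or $\eta(a)$ is an intersection number $p^k_{ij}$ or a valency $k_i$ of $\frakX$. Summing products of binomial coefficients and these numbers gives a quantity depending only on $(t,\balpha)$, which proves closure. Commutativity follows because $p^k_{ij}=p^k_{ji}$ in $\frakX$ and the count is symmetric under swapping the roles of $\theta$ and $\eta$. Alternatively, the Bose--Mesner algebra can be identified with the symmetrized tensor algebra of the scheme on $X\sqcup\{\ast\}$, restricted to $\Theta$; this is commutative.

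\textbf{Step (ii).} Set $x_0=A_{1,\bzero}$ and $x_i=A_{0,\be_i}$ for $1\le i\le d$. We compute $x_0A_{t,\balpha}$ and $x_iA_{t,\balpha}$ using the counts of Step (i). A product $x_iA_{t,\balpha}$ changes one common position's relation index, so it equals $c\,A_{t,\balpha+\be_i-\be_j}$ summed over $j$, plus a term in $A_{t,\balpha+\be_i}$ that uses a position of relation $0$. For this to hold we should index so that $\alpha_0=h-t-|\balpha|$ is implicit. Similarly, $x_0A_{t,\balpha}$ moves one position out of the common domain, producing $A_{t+1,\balpha-\be_j}$ and $A_{t+1,\balpha}$ together with lower terms. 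Under a graded order such as $\grlex$ on $\NN^{d+1}$ with degree $t+|\balpha|$, the leading term of $x_0^{t}\prod x_i^{\alpha_i}$ is a positive multiple of $A_{t,\balpha}$, and all other terms have indices in $\Lambda$ of smaller order. An induction then shows that each $A_{t,\balpha}$ is a polynomial of the required shape in $x_0,\dots,x_d$ with nonzero leading coefficient. The hypothesis $1\le h\le n-1$ guarantees that $t=1$ occurs, so $x_0\neq 0$, and that positions of relation $0$ occur, which we need when raising $\balpha$.

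\textbf{Step (iii).} This is the main obstacle. I would first construct the primitive idempotents. The all-ones/identity decomposition on the domain part gives the Johnson-type idempotents of $J(n,h)$, indexed by $s\le\min(h,n-h)$. On the map part we take tensor powers of the idempotents $E_0,\dots,E_d$ of $\frakX$. Combining the two yields idempotents $E_{s,\bbeta}$ indexed by the same set $\Lambda$, with $s\leftrightarrow t$ after the appropriate range reduction. I would then verify the Krein-type recursions by computing the Hadamard products $E_{1,\bzero}\circ E_{s,\bbeta}$ and $E_{0,\be_i}\circ E_{s,\bbeta}$ via the second eigenmatrix. One route is the dual of Step (ii): use the orthogonality relations and express $Q$ as a Hahn polynomial in $s$ times a Mizukawa--Tanaka multivariate Krawtchouk polynomial in $\bbeta$. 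Then the three-term recursions of the Hahn polynomials and the $d+1$ recurrences of the multivariate Krawtchouk polynomials show that the Hadamard product with a generator shifts indices by at most one step in the graded order, with a nonzero leading Krein coefficient. The delicate points are establishing that these leading coefficients are nonzero for every index in $\Lambda$, and confirming that the index set is closed downward under the chosen order. I would attempt both by exhibiting explicit leading coefficients as products of binomial-type factors and Krein parameters $q^k_{1k'}$ of $\frakX$.
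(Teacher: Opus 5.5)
Parts (i) and (ii) are essentially sound. In (i) you count all intersection numbers directly. This is a legitimate alternative to the paper, which computes only products with the generators (\cref{lem:generator-products}), gets (i) and (ii) together from \cref{prop:scheme-from-generators}, and handles $h=0,n$ separately. Your count is uniform in $h$. However, commutativity needs the explicit bijection exchanging the two $t$-element sets $\domn\theta\setminus\domn\eta$ and $\domn\eta\setminus\domn\theta$ (together with $p_{ij}^{\ell}=p_{ji}^{\ell}$ and $k_{i'}=k_i$); literally swapping $\theta$ and $\eta$ only gives the transposition identity.

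In (ii), $A_{1,\bzero}A_{t,\balpha}$ contains $t(\alpha_\ell+1)A_{t,\balpha+\be_\ell}$, which has the same total degree as $A_{t+1,\balpha}$. So for $h\ge2$ a graded order works only if $(0,\be_\ell)\prec(1,\bzero)$. Grlex with $t$ compared first satisfies this; an arbitrary graded order does not. Your description of $A_{0,\be_i}A_{t,\balpha}$ also omits $tk_iA_{t,\balpha}$ and the terms $A_{t,\balpha-\be_j+\be_\ell}$ weighted by $p_{ij}^{\ell}$, but this is harmless for triangularity.

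The genuine gap is in (iii), and it starts with the idempotents. Suppose ``combining'' means multiplying the lifted $J(n,h)$-idempotents $E_{\bzero,s}$ by the block-diagonal tensor idempotents $\bigoplus_B E^{[h]}_{\bbeta}$. Then the product is $0$ for $\bbeta\ne\bzero$. The image of $E_{\bzero,s}$ consists of block-constant vectors, and every tensor containing a factor $E_j$ with $j\ge1$ annihilates these.

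The correct eigenspaces place $U_j$ $(j\ge1)$ at $|\bbeta|$ positions that must lie inside the domain, and put the Johnson structure on the remaining $n-|\bbeta|$ positions. That is, they use $J(n-|\bbeta|,h-|\bbeta|)$, not $J(n,h)$. You then still have to compute the entries of these projections (the paper uses \eqref{eq:extension-Q-coefficient}) to see that they lie in the Bose--Mesner algebra, and use $|\Lambda^\ast|=|\Lambda|$ to get primitivity. The outcome is $Q_{\bbeta,k}(t,\balpha)=\frac{\binom n{|\bbeta|}}{\binom h{|\bbeta|}}\mathsf H_k(n-|\bbeta|,h-|\bbeta|;t)\,Q^{[h-t]}_{\bbeta}(\balpha)$ for $|\bbeta|\le h-t$, and $0$ otherwise. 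So the Hahn-times-Krawtchouk shape you anticipate is right, but the Hahn parameters depend on $|\bbeta|$ and the Krawtchouk length depends on $t$.

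This coupling is why ``multiply the Hahn and Krawtchouk recurrences'' does not close the argument. In $Q^{[L]}_{\be_j}Q^{[L]}_{\bbeta}=\sum_{\boldsymbol\delta}q^{[L]\boldsymbol\delta}_{\be_j\bbeta}Q^{[L]}_{\boldsymbol\delta}$ one has $q^{[L]\,\bbeta-\be_s}_{\be_j\bbeta}=(L-|\bbeta|+1)\,q^{0}_{js}$ for $\beta_s\ge1$. Once $L=h-t$, this coefficient depends on the Hahn variable. Moreover, whenever $|\boldsymbol\delta|\ne|\bbeta|$, the factor $\mathsf H_k(n-|\bbeta|,h-|\bbeta|;t)$ belongs to the wrong Hahn family and must be re-expanded in $\mathsf H_r(n-|\boldsymbol\delta|,h-|\boldsymbol\delta|;t)$. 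No three-term recurrence of a single family supplies this.

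The missing idea is the paper's, in three steps:
\begin{enumerate}
\item Write each Krein parameter via \eqref{eq:Krein-eigenmatrix-formula2} as a $t$-sum of $\nu_t^{n-|\boldsymbol\delta|,h-|\boldsymbol\delta|}q^{[h-t]\boldsymbol\delta}_{\bbeta\bgamma}$ times three Hahn factors.
\item Dispose of the cases $\boldsymbol\delta\prec_{\grlex}\bbeta+\bgamma$ using the block order $\preceq_Q$.
\item For $\boldsymbol\delta=\bbeta+\bgamma$, use that the top extension Krein parameter $\prod_j\binom{\beta_j+\gamma_j}{\beta_j}$ is independent of $t$ (\cref{lem:extension-Q-krein}). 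Then \cref{lem:Johnson-quotient-degree} gives both $r\le k+\ell$ and nonvanishing at $r=k+\ell$.
\end{enumerate}
If you want a graded order with generators only, the identity $\nu_t^{N-1,H-1}=\frac{H-t}{H}\nu_t^{N,H}$ yields two-term re-expansions between adjacent Hahn families. Either way, some cross-family argument is indispensable.

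Likewise, the leading coefficients do not come from Krein parameters ``$q^{k}_{1k'}$'' of $\frakX$, which has no distinguished $E_1$. At $(\bbeta+\be_j,k)$ the leading coefficient is, up to positive factors, $\beta_j+1$ times the coefficient of $\mathsf H_k(n-|\bbeta|-1,h-|\bbeta|-1;\cdot)$ in $\mathsf H_k(n-|\bbeta|,h-|\bbeta|;\cdot)$. This is nonzero because the latter polynomial has exact degree $k\le\min\{h-|\bbeta|-1,n-h\}$.
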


The terminology in this theorem is defined in \cref{sec:prelim}, and more detailed
statements are given in \cref{thm:scheme,thm:Q}.
We henceforth call $\frakJ_{n,h}(\frakX)$ the \emph{generalized Johnson association scheme}
obtained from $\frakX$. We call $\frakX$ the \emph{base association scheme}
of $\frakJ_{n,h}(\frakX)$.

An early example of this construction is the \emph{nonbinary Johnson association scheme}.
Its points are words of constant weight over an alphabet with a distinguished zero symbol,
and its relations are determined by the size of the intersection of the two supports
and the number of coordinates on which the letters agree within that intersection.
In the present paper, this is the case where the base association scheme is the association scheme of a complete graph.
This construction appears in Dunkl's \cite{Dunkl1976} study of wreath product actions
and an addition theorem for Krawtchouk polynomials.
Tarnanen--Aaltonen--Goethals \cite{TarnanenAaltonenGoethals1985} gave an elementary
derivation of the eigenvalues of the nonbinary Johnson association scheme.

The generalization to an arbitrary finite Gelfand pair $(G,H)$ is due to
Ceccherini-Silberstein--Scarabotti--Tolli \cite{CST2006}.
They identify the space of partial maps into $X=G/H$ with
\[
 (G\wr\mathfrak S_n)\big/
 \bigl((H\wr\mathfrak S_h)\times(G\wr\mathfrak S_{n-h})\bigr),
\]
and call this space a generalized Johnson association scheme.
They also determine the multiplicity-free decomposition of the permutation representation
and the spherical functions, giving an expression that combines Hahn polynomials with
a symmetrization of the spherical functions of the base Gelfand pair
\cite{CST2006}.
Mishra--Srinivasan \cite{MishraSrinivasan2015} realize this space as the
rank-$h$ part of a generalized Boolean algebra and give combinatorial
derivations of the decomposition and eigenvalue formulas.

The construction is also related to the exponentiation of coherent
configurations; see Section~4 in \cite{CaiGuoGavrilyukPonomarenko2025}.
Here we use products of adjacency matrices and orthogonal projections
to determine the eigenmatrices and prove the multivariate $P$- and
$Q$-polynomial properties for every commutative base association scheme.

The multivariate $P$- and $Q$-polynomial properties describe adjacency matrices and
primitive idempotents as polynomials in several generators.
The relationship between multivariate orthogonal polynomials and association schemes
has long been studied. For example, Mizukawa--Tanaka \cite{MizukawaTanaka2004}
expressed the eigenmatrices of symmetric tensor powers of character algebras in terms of
multivariate hypergeometric functions.
As polynomial structures of association schemes, bivariate versions were formulated by
Bernard--Cramp\'e--Poulain d'Andecy--Vinet--Zaimi \cite{BCPVZ2024}.
Subsequently, Bannai--Kurihara--Zhao--Zhu \cite{BKZZ2025} gave definitions and criteria
in an arbitrary number of variables using general monomial orders. We use the latter definition in this paper.

For generalized Johnson association schemes, Theorem~4.5 in \cite{BKZZ2025} establishes the
multivariate $P$-polynomial property in the finite Gelfand pair case.
As for the $Q$-polynomial property, Cramp\'e--Vinet--Zaimi--Zhang \cite{CVZZ2024}
gave a bivariate $Q$-polynomial structure for the nonbinary Johnson association scheme under the condition
$n\ge2h-1$, and Bannai--Kurihara--Zhao--Zhu \cite{BKZZ2024} removed this parameter
restriction under the definition using general monomial orders.
For the general construction from finite Gelfand pairs, Remark~10 in \cite{BKZZ2025}
lists the multivariate $Q$-polynomial property as an open problem.
\Cref{thm:Q} answers this problem affirmatively and further shows that the base may be
any commutative association scheme, without assuming a group action.
We do not assume that the base association scheme itself is $P$-polynomial or $Q$-polynomial.
Moreover, for $1\le h \le n$,
\cref{prop:GJ-schurian} shows that a non-Schurian base association scheme gives a non-Schurian generalized Johnson association scheme.

We prove the multivariate $P$-polynomial property using $A_{1,\bzero}$
and $A_{0,\be_i}$ $(1\le i\le d)$ as generators, where $\be_1,\ldots,\be_d$
are the standard unit vectors of $\NN^d$.
Here \cref{prop:scheme-from-generators} gives a general criterion for recognizing
commutative association schemes from products of the generators, without first
computing all intersection numbers. It applies to other partitions satisfying
the same triangularity conditions, independently of the present construction.
We then combine the primitive idempotents of $\frakX$ with those of Johnson
association schemes to construct the primitive idempotents of
$\frakJ_{n,h}(\frakX)$ and obtain its two eigenmatrices in
\cref{thm:factorization-main}.
The first eigenmatrix of $\frakJ_{n,h}(\frakX)$ is a product of an Eberlein polynomial and a first eigenvalue of
an extension, and the second eigenmatrix is a product of a Hahn polynomial and a second eigenvalue
of an extension. The extension factors are the multivariate Krawtchouk polynomials of
Mizukawa--Tanaka.
To prove the multivariate $Q$-polynomial property, we use the formula expressing
Krein parameters in terms of the second eigenmatrix to compute those of
$\frakJ_{n,h}(\frakX)$. We then verify the required triangularity and
nonvanishing conditions using the triangularity of the extension Krein
parameters and the orthogonality of Hahn polynomials.

The two appendices collect supplements obtained from these calculations.
In the finite Gelfand pair case, the columns of the second eigenmatrix
correspond to spherical functions.
Appendix~\ref{sec:kernel} makes this correspondence explicit and compares the
resulting formula with Theorem~6.18 in \cite{CST2006}.
For the normalized columns, we give the weight, squared norms, and the orthogonality and dual orthogonality relations.
The parameters of the Hahn polynomial depend on $|\bbeta|$,
where $(\bbeta,k)$ indexes a primitive idempotent, while the
length parameter of the multivariate Krawtchouk polynomial is $h-t$.
This dependence resembles Tratnik's \cite{Tratnik1991} product construction.
This feature can be viewed as an extension of the bivariate polynomials of the nonbinary
Johnson association scheme \cite{CVZZ2024} to an arbitrary commutative base association scheme.
In Appendix~\ref{sec:imprimitive}, we examine the imprimitivity arising from partial maps
with the same domain. The block association scheme is the extension of $\frakX$ of length $h$, and the quotient association scheme is
$J(n,h)$. We explicitly describe their respective $d$-variate and univariate
$P$- and $Q$-polynomial structures.
These polynomial structures agree with the inheritance formulas in
Theorems~4.6 and~4.7 in \cite{HigashitaniKurihara2026}.

\Cref{sec:prelim} recalls the definitions of association schemes and
multivariate polynomial structures, and states the criteria used in the proofs.
\Cref{sec:scheme-examples} records the formulas for Johnson association schemes
and extensions needed in the eigenmatrix calculation.
\Cref{sec:construction} proves the construction and the $P$-polynomial property of
generalized Johnson association schemes, and relates their Schurian property to that of the base association scheme.
\Cref{sec:primitive-idempotents_of_GJS} determines the primitive idempotents and
eigenmatrices, and proves the multivariate $Q$-polynomial property.
Appendices~\ref{sec:kernel} and~\ref{sec:imprimitive} treat, respectively, spherical
functions and orthogonality relations, and imprimitivity and the polynomial structures
of the blocks and the quotient.

\section{Association schemes and multivariate polynomial properties}\label{sec:prelim}

This section fixes notation for commutative association schemes, their
eigenmatrices, intersection numbers, and Krein parameters. We then recall
monomial orders and multivariate $P$- and $Q$-polynomial structures,
and state the criteria used to prove these properties in
\cref{sec:construction,sec:primitive-idempotents_of_GJS}.

\subsection{Commutative association schemes}\label{subsec:association-schemes}
For details of this subsection, see \cite{BannaiIto1984}.
For a finite set $X$, write $v=|X|$ and $\Delta_X=\{(x,x):x\in X\}$.
Define the transpose of a relation $R\subseteq X\times X$ by
$R^{\mathsf T}=\{(y,x):(x,y)\in R\}$.
Let $\mathcal I$ be a finite set and
$\mathcal R\colon X\times X\longrightarrow\mathcal I$ a surjection.
For $i\in\mathcal I$, put $R_i=\mathcal R^{-1}(i)$.
The \emph{adjacency matrix} $A_i$ of $R_i$ is the $0$--$1$ matrix with rows and
columns indexed by $X$ given by
\[
 (A_i)_{xy}=
 \begin{cases}
 1,&(x,y)\in R_i,\\
 0,&(x,y)\notin R_i.
 \end{cases}
\]
Let $I_X$ and $J_X$ be the identity matrix and the all-ones matrix on $X$,
respectively, and let $\one$ be the all-ones column vector.
Since $\{R_i\}_{i\in\mathcal I}$ is a partition of $X\times X$, we have
$\sum_{i\in\mathcal I}A_i=J_X$.

\begin{definition}\label[definition]{def:association-scheme}
The triple $\frakX=(X,\mathcal R,\mathcal I)$ is called a \emph{commutative association scheme} if the following conditions hold.
\begin{enumerate}[
 label=\textup{(AS\arabic*)},
 ref=\textup{(AS\arabic*)},
 leftmargin=3.6em]
\item\label{item:as-identity}
There exists $0\in\mathcal I$ such that $R_0=\Delta_X$, that is, $A_0=I_X$.
\item\label{item:as-transpose}
For each $i\in\mathcal I$, there exists $i'\in\mathcal I$ such that
$R_i^{\mathsf T}=R_{i'}$, that is, $A_i^{\mathsf T}=A_{i'}$.
\item\label{item:as-intersection}
For $i,j,\ell\in\mathcal I$ and $(x,y)\in R_\ell$, the number
\[
 \bigl|\{z\in X:(x,z)\in R_i,\ (z,y)\in R_j\}\bigr|
\]
is independent of the choice of $(x,y)$. 
Denote this number by $p_{ij}^{\ell}$.
Then we have
\[
 A_iA_j=\sum_{\ell\in\mathcal I}p_{ij}^{\ell}A_\ell
 \qquad(i,j\in\mathcal I).
\]
\item\label{item:as-commutativity}
We have $p_{ij}^{\ell}=p_{ji}^{\ell}$ for all $i,j,\ell\in\mathcal I$,
that is, $A_iA_j=A_jA_i$ for all $i,j\in\mathcal I$.
\end{enumerate}
The numbers $p_{ij}^{\ell}$ are called the \emph{intersection numbers}.
\end{definition}

If $i'=i$ for all $i\in\mathcal I$, then $\frakX$ is called a
\emph{symmetric association scheme}.
We call $d=|\mathcal I|-1$ the \emph{class} of $\frakX$.
Write the \emph{valency} of $R_i$ as
$k_i=\bigl|\{y\in X:(x,y)\in R_i\}\bigr|=p_{i\,i'}^0$.
By \ref{item:as-intersection}, this value is independent of the choice of
$x\in X$; in particular, $k_0=1$.
Moreover, \ref{item:as-transpose} and \ref{item:as-commutativity}, together
with $\sum_{r\in\mathcal I}A_r=J_X$, give
$A_i\one=k_i\one$, $\one^{\mathsf T}A_i=k_i\one^{\mathsf T}$,
and $k_{i'}=k_i$ for all $i\in\mathcal I$.
By \ref{item:as-transpose}--\ref{item:as-commutativity},
$\cA(\frakX)=\Span_{\CC}\{A_i:i\in\mathcal I\}$ is a
$(d+1)$-dimensional commutative algebra closed under transposition.
It is called the \emph{Bose--Mesner algebra} of $\frakX$.

Equip $\CC^X$ with the inner product
\[
 \langle f,g\rangle_X=\sum_{x\in X}\overline{f(x)}g(x),
\]
and write $\{e_y:y\in X\}$ for its standard orthonormal basis and $M^*$ for
the conjugate transpose of a matrix $M$.
The algebra $\cA(\frakX)$ is commutative and closed under $*$, so its \emph{primitive idempotents} $\{E_j\}_{j\in\mathcal J}$ are the orthogonal projections onto the
maximal common eigenspaces $U_j=E_j\CC^X$. They form a basis of $\cA(\frakX)$ and satisfy
\[
 E_jE_\ell=\delta_{j\ell}E_j,\qquad E_j^*=E_j,\qquad
 \sum_{j\in\mathcal J}E_j=I_X,\qquad E_0=\frac1vJ_X.
\]
Although $0\in\mathcal I$ and $0\in\mathcal J$ are different objects, we use
the same notation since they can be distinguished from the context.
Here $|\mathcal J|=d+1$, and the \emph{multiplicity} of $E_j$ is
$m_j=\rank E_j=\dim U_j$.
We will use the following formula:
$(E_j)_{x,y}=\langle E_je_x,E_je_y\rangle_X$.
Define the \emph{first eigenmatrix} $P$ and the \emph{second eigenmatrix} $Q$
by the coefficients between the two bases:
\begin{equation}\label{eq:base-PQ}
 A_i=\sum_{j\in\mathcal J}P_i(j)E_j,\qquad
 vE_j=\sum_{i\in\mathcal I}Q_j(i)A_i.
\end{equation}
Here $i\in\mathcal I$ and $j\in\mathcal J$. In particular,
\begin{equation}\label{eq:PQ-zero}
 P_i(0)=k_i\quad(i\in\mathcal I),\qquad
 Q_j(0)=m_j\quad(j\in\mathcal J).
\end{equation}

The entrywise product of matrices is called the \emph{Hadamard product} and is denoted by $M\circ N$.
Note that $A_i\circ A_r=\delta_{ir}A_i$ for all $i,r\in\mathcal I$.
Hence $\cA(\frakX)$ is also closed under the Hadamard product.
Define the \emph{Krein parameters} $q_{jk}^{\ell}$ $(j,k,\ell\in\mathcal J)$ by
\begin{equation}\label{eq:Krein-definition}
 (vE_j)\circ(vE_k)=\sum_{\ell\in\mathcal J}q_{jk}^{\ell}\,vE_\ell.
\end{equation}
The $q_{jk}^{\ell}$ are known to be nonnegative real numbers.
Note that, for all $j,\ell\in\mathcal J$ and $i\in\mathcal I$,
\begin{align}
 Q_j(i')&=\overline{Q_j(i)},\label{eq:Q-transpose}\\
 \frac{P_i(j)}{k_i}
 &=\frac{Q_j(i')}{m_j},\label{eq:PQ-entry-duality}\\
 \sum_{i\in\mathcal I}k_i\overline{Q_j(i)}Q_\ell(i)
 &=vm_j\delta_{j\ell}.\label{eq:Q-orthogonality}
\end{align}
We henceforth regard the $j$th column of the second eigenmatrix $Q$ as a function
$Q_j$ on $\mathcal I$.
By \eqref{eq:Krein-definition}, their entrywise product can be written as
\[
 Q_jQ_k=\sum_{\ell\in\mathcal J}q_{jk}^{\ell}Q_\ell.
\]
Thus, by \eqref{eq:Q-orthogonality} and the reality of $q_{jk}^{\ell}$,
for $j,k,\ell\in\mathcal J$ we have
\begin{equation}\label{eq:Krein-eigenmatrix-formula2}
 q_{jk}^{\ell}=\frac{1}{vm_\ell}
 \sum_{i\in\mathcal I}k_i\overline{Q_j(i)Q_k(i)}Q_\ell(i).
\end{equation}

\subsection{Monomial orders and multivariate polynomial properties}\label{subsec:orders}
For details of this subsection, see \cite{BKZZ2025,HigashitaniKurihara2026}.
For $r$ variables $x_1,\ldots,x_r$, identify
$\balpha=(\alpha_1,\ldots,\alpha_r)\in\NN^r$ with the monomial
$x^{\balpha}=x_1^{\alpha_1}\cdots x_r^{\alpha_r}$.

A total order $\preceq$ on $\NN^r$ is a \emph{monomial order} if
$\bzero\preceq\balpha$ for every $\balpha\in\NN^r$ and
\[
 \balpha\preceq\bbeta
 \quad\Longrightarrow\quad
 \balpha+\bgamma\preceq\bbeta+\bgamma
 \qquad(\balpha,\bbeta,\bgamma\in\NN^r).
\]
The \emph{lexicographic order} $\preceq_{\lex}$ compares components from left to right,
using the first component at which they differ.
The \emph{graded lexicographic order} $\preceq_{\grlex}$ first compares total degrees
$|\balpha|=\sum_i\alpha_i$ and breaks ties by $\preceq_{\lex}$.

Let $1\le s<r$, and regard $\NN^r=\NN^s\times\NN^{r-s}$.

\begin{definition}\label[definition]{def:elimination-block-order}
A monomial order $\preceq$ is of \emph{$s$-elimination type} if
\[
 (\bzero,\bbeta_2)\prec(\balpha,\bbeta_1)
\]
for every $\balpha\in\NN^s\setminus\{\bzero\}$ and
$\bbeta_1,\bbeta_2\in\NN^{r-s}$.
If, in addition,
\[
 (\balpha_1,\bzero)\prec(\balpha_2,\bzero)
 \quad\Longrightarrow\quad
 (\balpha_1,\bbeta_1)\prec(\balpha_2,\bbeta_2)
\]
holds for all $\bbeta_1,\bbeta_2\in\NN^{r-s}$, then $\preceq$ is said to be
of \emph{$s$-block type}.
\end{definition}

Every order of $s$-block type is of $s$-elimination type; the lexicographic
order is an example of both, with respect to this decomposition
\cite{HigashitaniKurihara2026}.
A subset $D\subseteq\NN^r$ is \emph{downward closed} if $\balpha\in D$ and
$0\le\beta_i\le\alpha_i$ for all $i$ imply $\bbeta\in D$.
In particular, a nonempty downward closed set contains $\bzero$.
Let $\eps_1,\ldots,\eps_r$ be the standard unit vectors of $\NN^r$.

\begin{definition}\label[definition]{def:multivariate-polynomial}
Let $D\subseteq\NN^r$ be a finite downward closed subset.
Suppose that $\eps_1,\ldots,\eps_r\in D$.
Suppose that the adjacency matrices of a commutative association scheme are
indexed as $\{A_{\balpha}\}_{\balpha\in D}$.
Write $A^{\balpha}=A_{\eps_1}^{\alpha_1}\cdots A_{\eps_r}^{\alpha_r}$.
The association scheme is said to be \emph{$r$-variate $P$-polynomial} on $D$ with respect to
a monomial order $\preceq$ if the following two conditions hold.
\begin{enumerate}[
 label=\textup{(P\arabic*)},
 ref=\textup{(P\arabic*)},
 leftmargin=3.2em]
\item\label{item:p-polynomial-representation}
For each $\balpha\in D$, there is a polynomial
$v_{\balpha}\in\CC[x_1,\ldots,x_r]$ with
$A_{\balpha}=v_{\balpha}(A_{\eps_1},\ldots,A_{\eps_r})$.
The leading term of $v_{\balpha}$ with respect to $\preceq$ is a nonzero
constant multiple of $x^{\balpha}$, and the exponent $\bbeta$ of every monomial
$x^{\bbeta}$ occurring in $v_{\balpha}$ belongs to $D$.
\item\label{item:p-polynomial-triangularity}
For every $i$ and $\balpha\in D$, we have
\[
 A_{\eps_i}A^{\balpha}
 \in\Span\{A^{\bbeta}:\bbeta\in D,\
                         \bbeta\preceq\balpha+\eps_i\}.
\]
\end{enumerate}
We call $v_{\balpha}$ an \emph{associated polynomial}.
\end{definition}

\begin{definition}\label[definition]{def:multivariate-Q-polynomial}
Let $D^*\subseteq\NN^{r^*}$ be a finite downward closed subset containing
$\eps_1,\ldots,\eps_{r^*}$, and let $\preceq^*$ be a monomial order on
$\NN^{r^*}$.
Suppose that the primitive idempotents of a commutative association scheme
are indexed as $\{E_{\balpha}\}_{\balpha\in D^*}$, with
$E_{\bzero}=|X|^{-1}J_X$, and put $F_{\balpha}=|X|E_{\balpha}$.
The scheme is \emph{$r^*$-variate $Q$-polynomial} on $D^*$ with respect to
$\preceq^*$ if conditions \ref{item:p-polynomial-representation} and
\ref{item:p-polynomial-triangularity} of \cref{def:multivariate-polynomial}
hold with $(D,r,\preceq,A_{\balpha})$ replaced by
$(D^*,r^*,\preceq^*,F_{\balpha})$ and all products interpreted
as Hadamard products. In particular, the constant polynomial $1$
is evaluated as $F_{\bzero}=J_X$.
The associated polynomials are denoted by $v_{\balpha}^*$.
\end{definition}

We first state equivalent conditions for the multivariate $P$-polynomial property
in terms of intersection numbers.
\begin{proposition}
\label[proposition]{prop:P-criterion}
Let $D\subseteq\NN^r$ be a finite downward closed subset containing
$\eps_1,\ldots,\eps_r$, and let $\preceq$ be a monomial order on $\NN^r$.
Suppose that the adjacency matrices of a commutative association scheme
$\frakX$ are indexed as $\{A_{\balpha}\}_{\balpha\in D}$, with $A_{\bzero}=I_X$.
Then the following three conditions are equivalent.
\begin{enumerate}[label=\textup{(\roman*)},ref=\textup{(\roman*)}]
\item\label{item:p-criterion-polynomial}
$\frakX$ is $r$-variate $P$-polynomial on $D$ with respect to $\preceq$.
\item\label{item:p-criterion-intersection}
For all $1\le i\le r$ and $\balpha,\bbeta\in D$, we have
\[
 p_{\eps_i\balpha}^{\bbeta}\ne0
 \quad\Longrightarrow\quad
 \bbeta\preceq\balpha+\eps_i.
\]
Moreover, if $\balpha+\eps_i\in D$, then
\[
 p_{\eps_i\balpha}^{\balpha+\eps_i}\ne0.
\]
\item\label{item:p-criterion-intersection-triangularity}
For arbitrary $\balpha,\bbeta,\bgamma\in D$, we have
\[
 p_{\balpha\bbeta}^{\bgamma}\ne0
 \quad\Longrightarrow\quad
 \bgamma\preceq\balpha+\bbeta.
\]
Moreover, if $\balpha+\bbeta\in D$, then
\[
 p_{\balpha\bbeta}^{\balpha+\bbeta}\ne0.
\]
\end{enumerate}
\end{proposition}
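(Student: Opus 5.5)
The plan is to prove the cycle of implications \ref{item:p-criterion-polynomial}$\Rightarrow$\ref{item:p-criterion-intersection}$\Rightarrow$\ref{item:p-criterion-intersection-triangularity}$\Rightarrow$\ref{item:p-criterion-polynomial}. Throughout we use that $\{A_{\balpha}\}_{\balpha\in D}$ is a basis of $\cA(\frakX)$ and that $A_{\balpha}A_{\bbeta}=\sum_{\bgamma}p_{\balpha\bbeta}^{\bgamma}A_{\bgamma}$ with nonnegative coefficients. A key observation is that, for any $S\subseteq D$ with $\bgamma\preceq\bbeta$ for $\bgamma\in S$, the spans $\Span\{A^{\bgamma}:\bgamma\in D,\bgamma\preceq\bbeta\}$ and $\Span\{A_{\bgamma}:\bgamma\in D,\bgamma\preceq\bbeta\}$ coincide once \ref{item:p-polynomial-representation} is known.

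\emph{Step 1: \ref{item:p-criterion-polynomial}$\Rightarrow$\ref{item:p-criterion-intersection}.} By \ref{item:p-polynomial-representation}, the transition from $\{A^{\bgamma}\}_{\bgamma\in D}$ to $\{A_{\bgamma}\}_{\bgamma\in D}$ is triangular with respect to $\preceq$ and has nonzero diagonal. Hence both families are bases and, for each $\bbeta$, they span the same space $V_{\preceq\bbeta}$. Write $A_{\balpha}=cA^{\balpha}+(\text{lower }A^{\bgamma})$. Multiplying by $A_{\eps_i}$ and applying \ref{item:p-polynomial-triangularity} termwise (using the monomial order property $\bgamma\prec\balpha\Rightarrow\bgamma+\eps_i\prec\balpha+\eps_i$) gives $A_{\eps_i}A_{\balpha}\in V_{\preceq\balpha+\eps_i}$. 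Expanding in the $A$-basis then gives the vanishing part of \ref{item:p-criterion-intersection}. For nonvanishing when $\balpha+\eps_i\in D$: the $A^{\balpha+\eps_i}$-coefficient of $A_{\eps_i}A_{\balpha}$ is $c\cdot c_i$, where $A_{\eps_i}=c_iA^{\eps_i}$ up to lower terms. Lower terms in $v_{\eps_i}$ are only $x^{\bzero}$, and $A_{\eps_i}\ne$ a multiple of $I$, so $c_i\neq 0$ (indeed $c_i=1$). Translating back to the $A$-basis by triangularity, the $A_{\balpha+\eps_i}$-coefficient, which is $p^{\balpha+\eps_i}_{\eps_i\balpha}$, is nonzero.

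\emph{Step 2: \ref{item:p-criterion-intersection}$\Rightarrow$\ref{item:p-criterion-intersection-triangularity}.} This is the main step. We use induction on $\balpha$ along $\preceq$ restricted to the finite set $D$. For $\balpha=\bzero$ the claim is trivial. Otherwise pick $i$ with $\alpha_i>0$; then $\balpha-\eps_i\in D$ by downward closure, and by \ref{item:p-criterion-intersection}
\[
A_{\eps_i}A_{\balpha-\eps_i}=p\,A_{\balpha}+\sum_{\bdelta\prec\balpha}p_{\eps_i,\balpha-\eps_i}^{\bdelta}A_{\bdelta},\qquad p\neq0 .
\]
Solve this for $A_{\balpha}$ and multiply by $A_{\bbeta}$. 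The term $A_{\eps_i}(A_{\balpha-\eps_i}A_{\bbeta})$ lies in the span of $A_{\bgamma}$ with $\bgamma\preceq\balpha+\bbeta$: apply induction to $\balpha-\eps_i$, then the one-step condition together with the monomial order property. The terms $A_{\bdelta}A_{\bbeta}$ with $\bdelta\prec\balpha$ lie in that span by induction. Since the coefficients $p_{\balpha\bbeta}^{\bgamma}$ are the expansion coefficients of $A_{\balpha}A_{\bbeta}$, vanishing follows.

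For nonvanishing of $p_{\balpha\bbeta}^{\balpha+\bbeta}$, the cleanest route is positivity rather than tracking a coefficient through cancellations. Take a chain $\bzero=\bgamma_0,\bgamma_1,\dots,\bgamma_m=\balpha$ of unit steps, staying inside $D$ by downward closure. Use the combinatorial meaning of intersection numbers: $p_{\balpha\bbeta}^{\bgamma}\ne0$ iff some $(x,y)\in R_{\bgamma}$ admits $z$ with $(x,z)\in R_{\balpha}$ and $(z,y)\in R_{\bbeta}$. Starting from $(x,y)\in R_{\balpha+\bbeta}$, repeatedly apply the nonvanishing part of \ref{item:p-criterion-intersection} (transposed via $p_{ij}^{\ell}=p_{ji}^{\ell}$) to peel off unit steps, producing a walk from $x$ to $y$. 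Then regroup the first $|\balpha|$ steps: the relation between $x$ and the intermediate point must be $\balpha$, since all relations on the way are $\preceq$-maximal by the vanishing part. Concretely, $A_{\eps_{i_1}}\cdots A_{\eps_{i_m}}$ has a positive $A_{\balpha}$-coefficient and is supported on $\preceq\balpha$. This positivity bookkeeping is what I expect to be the delicate point.

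\emph{Step 3: \ref{item:p-criterion-intersection-triangularity}$\Rightarrow$\ref{item:p-criterion-polynomial}.} Specializing \ref{item:p-criterion-intersection-triangularity} to $\bbeta=\eps_i$ gives the data of \ref{item:p-criterion-intersection}. Construct $v_{\balpha}$ by induction on $\preceq$: set $A_{\balpha}=p^{-1}\bigl(A_{\eps_i}A_{\balpha-\eps_i}-\sum_{\bdelta\prec\balpha}p^{\bdelta}A_{\bdelta}\bigr)$. Its leading monomial is $x^{\balpha}$, and every exponent lies in $D$, because the exponents come from earlier polynomials shifted by $\eps_i$ and these are dominated componentwise, hence lie in $D$ by downward closure. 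For \ref{item:p-polynomial-triangularity}, the invertible triangular change of basis gives $V_{\preceq\bbeta}=\Span\{A^{\bgamma}:\bgamma\preceq\bbeta\}$. Writing $A^{\balpha}$ via $A_{\bgamma}$ with $\bgamma\preceq\balpha$, multiplying by $A_{\eps_i}$ and using the vanishing condition lands in $V_{\preceq\balpha+\eps_i}$.
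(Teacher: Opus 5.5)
Your overall route (a self-contained cycle (i)$\Rightarrow$(ii)$\Rightarrow$(iii)$\Rightarrow$(i)) is sound in outline. The paper's proof is different: it simply cites Proposition~2.15 of \cite{BKZZ2025} for (i)$\Leftrightarrow$(ii) and Lemma~2.16(ii) of \cite{HigashitaniKurihara2026} for (i)$\Rightarrow$(iii), and notes that (iii)$\Rightarrow$(ii) is trivial.

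\textbf{The gap in Step 3.} You do not establish the last clause of (P1), that every exponent occurring in $v_{\balpha}$ lies in $D$. In your recursion $v_{\balpha}=p^{-1}\bigl(x_iv_{\balpha-\eps_i}-\sum_{\boldsymbol\delta\prec\balpha}p^{\boldsymbol\delta}v_{\boldsymbol\delta}\bigr)$, an exponent $\bgamma$ of $v_{\balpha-\eps_i}$ is only known to satisfy $\bgamma\in D$ and $\bgamma\preceq\balpha-\eps_i$. A monomial order does not make $\bgamma$ componentwise smaller than $\balpha-\eps_i$, so $\bgamma+\eps_i$ need not lie in $D$. Concretely, take the lexicographic order on $\NN^2$, where $(0,k)\prec(1,0)$ for every $k$, and let $D$ be the downward closure of $\{(2,1),(0,3)\}$. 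Nothing in (ii) forbids $p^{(0,2)}_{\eps_1,(0,1)}\ne0$. In that case $v_{(1,1)}$ picks up $x_2^2$ through $v_{(0,2)}$, and $x_1v_{(1,1)}$ then puts $x_1x_2^2$ into $v_{(2,1)}$, although $(1,2)\notin D$. Choosing $i=2$ does not help, since $v_{(2,0)}$ may contain $x_2^3$ through $v_{(0,3)}$, producing $x_2^4$. Because the polynomial representing $A_{\balpha}$ is not unique, such monomials need not cancel. Removing them is exactly where (ii) must be applied to products $A_{\eps_i}A^{\bgamma}$ with $\bgamma+\eps_i\notin D$.

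\textbf{How to repair it.} Reverse the construction. First show, by induction along $\preceq$ and using only (ii), that for every $\bgamma\in D$ the matrix $A^{\bgamma}=A_{\eps_i}A^{\bgamma-\eps_i}$ lies in $V_{\preceq\bgamma}=\Span\{A_{\boldsymbol\delta}:\boldsymbol\delta\in D,\ \boldsymbol\delta\preceq\bgamma\}$ with nonzero $A_{\bgamma}$-coefficient. Then $\{A^{\bgamma}\}_{\bgamma\in D}$ is a basis related to $\{A_{\bgamma}\}_{\bgamma\in D}$ by an invertible triangular matrix. Define $v_{\balpha}$ as the expansion of $A_{\balpha}$ in this basis; it has leading monomial $x^{\balpha}$ and all exponents in $D$ by construction. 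Your argument for (P2) then works as written.

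\textbf{Step 1.} Your claim that the lower terms of $v_{\eps_i}$ can only be $x^{\bzero}$ is false for general monomial orders: in lex, $(0,k)\prec(1,0)$. It is also unnecessary. Since $A_{\eps_i}=A^{\eps_i}$ literally, $A_{\eps_i}A^{\bgamma}=A^{\bgamma+\eps_i}$. The $A^{\balpha+\eps_i}$-coefficient of $A_{\eps_i}A_{\balpha}$ is therefore just the leading coefficient of $v_{\balpha}$, once (P2) pushes the terms with $\bgamma\prec\balpha$ and $\bgamma+\eps_i\notin D$ strictly below $\balpha+\eps_i$.

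\textbf{Step 2, nonvanishing.} As phrased, the argument is unfinished. Knowing that $A_{\eps_{i_1}}\cdots A_{\eps_{i_m}}$ is supported on $\preceq\balpha$ only gives $\bgamma\preceq\balpha$ for the relation $\bgamma$ between $x$ and the intermediate point. To exclude $\bgamma\prec\balpha$ you must invoke the vanishing half of (iii), which you have just proved. Peel the $\balpha$-steps off at $x$, so the remaining leg lies in $R_{\bbeta}$; then $p^{\balpha+\bbeta}_{\bgamma\bbeta}\ne0$ forces $\balpha+\bbeta\preceq\bgamma+\bbeta$.

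In fact you do not need positivity here. In your own induction, every contribution other than the top one lies in some $V_{\preceq\boldsymbol\sigma}$ with $\boldsymbol\sigma\prec\balpha+\bbeta$. So, with $p=p^{\balpha}_{\eps_i,\balpha-\eps_i}$, you can read off directly, with no cancellation, $p^{\balpha+\bbeta}_{\balpha\bbeta}=p^{-1}\,p^{\balpha-\eps_i+\bbeta}_{\balpha-\eps_i,\bbeta}\,p^{\balpha+\bbeta}_{\eps_i,\balpha-\eps_i+\bbeta}\ne0$.

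With these repairs you have a complete self-contained proof.
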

\begin{proof}
The equivalence of \ref{item:p-criterion-polynomial} and
\ref{item:p-criterion-intersection} is proved by Proposition~2.15 in \cite{BKZZ2025}.
The implication from \ref{item:p-criterion-polynomial} to
\ref{item:p-criterion-intersection-triangularity} follows from Lemma~2.16(ii) in
\cite{HigashitaniKurihara2026}.
Since \ref{item:p-criterion-intersection} is a special case of
\ref{item:p-criterion-intersection-triangularity},
\ref{item:p-criterion-intersection-triangularity} implies
\ref{item:p-criterion-intersection}.
Thus the three conditions are equivalent.
\end{proof}

The following criterion can be used to verify that a candidate partition
forms a commutative association scheme without first computing all intersection
numbers. 
It suffices to verify that the selected generators form a set closed under transposition, commute with each other, and satisfy the triangular conditions in
\cref{prop:P-criterion} \ref{item:p-criterion-intersection}.

\begin{proposition}
\label[proposition]{prop:scheme-from-generators}
Let $r$ be a positive integer, $X$ a finite set, and $D\subseteq\NN^r$ a finite
downward closed set containing $\eps_1,\ldots,\eps_r$.
Let $\preceq$ be a monomial order on $\NN^r$.
For a surjection
\[
 \mathcal R\colon X\times X\longrightarrow D,
\]
put $R_{\balpha}=\mathcal R^{-1}(\balpha)$ and let $A_{\balpha}$ be its relation matrix.
Assume the following.
\begin{enumerate}[label=\textup{(\roman*)},ref=\textup{(\roman*)}]
\item\label{item:generator-partition}
$R_{\bzero}=\Delta_X$, and $\{A_{\eps_1},\ldots,A_{\eps_r}\}$ is closed under transposition.
\item\label{item:generator-commutativity}
$A_{\eps_1},\ldots,A_{\eps_r}$ commute with each other.
\item\label{item:generator-triangularity}
For every $1\le i\le r$ and $\balpha\in D$, we can write
\begin{equation}\label{eq:generator-product-criterion}
 A_{\eps_i}A_{\balpha}
 =\sum_{\bbeta\in D}c_{i,\balpha}^{\bbeta}A_{\bbeta},
\end{equation}
where
\[
 c_{i,\balpha}^{\bbeta}\ne0
 \quad\Longrightarrow\quad
 \bbeta\preceq\balpha+\eps_i.
\]
Moreover, if $\balpha+\eps_i\in D$, then $c_{i,\balpha}^{\balpha+\eps_i}\ne0$.
\end{enumerate}
Then $(X,\mathcal R,D)$ is a commutative association scheme and is $r$-variate
$P$-polynomial on $D$ with respect to $\preceq$.
In particular, the coefficient $c_{i,\balpha}^{\bbeta}$ in
\eqref{eq:generator-product-criterion} equals the intersection number
$p_{\eps_i\balpha}^{\bbeta}$ of this association scheme.
\end{proposition}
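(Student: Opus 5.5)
The plan is to show by induction along $\preceq$ that every $A_{\balpha}$ lies in the algebra $\cB$ generated by $A_{\eps_1},\ldots,A_{\eps_r}$ and $I_X$. We then show that $\cA=\Span\{A_{\balpha}:\balpha\in D\}$ is a commutative algebra closed under transposition, and read off the axioms. Since $\{A_{\balpha}\}$ are $0$--$1$ matrices of a partition of $X\times X$, they are linearly independent, $\sum_{\balpha}A_{\balpha}=J_X$, and $\cA$ is closed under the Hadamard product.

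\emph{Step 1 ($\cA\subseteq\cB$).} Write every $\bgamma\in D\setminus\{\bzero\}$ as $\balpha+\eps_i$ with $\balpha\in D$; this is possible by downward closedness. By \eqref{eq:generator-product-criterion},
\[
 c_{i,\balpha}^{\bgamma}A_{\bgamma}
 =A_{\eps_i}A_{\balpha}-\sum_{\bbeta\prec\bgamma}c_{i,\balpha}^{\bbeta}A_{\bbeta},
 \qquad c_{i,\balpha}^{\bgamma}\ne0 .
\]
Since $\balpha\prec\bgamma$ (monomial order, $\bzero\prec\eps_i$) and $D$ is finite and well-ordered by $\preceq$, induction on $\bgamma$ gives $A_{\bgamma}=v_{\bgamma}(A_{\eps_1},\ldots,A_{\eps_r})$. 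Here $v_{\bgamma}$ has leading monomial $x^{\bgamma}$ with nonzero coefficient, and all its exponents lie in $D$ and are $\preceq\bgamma$. The base case is $A_{\bzero}=I_X$.

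\emph{Step 2 ($\cB\subseteq\cA$).} By (iii), $A_{\eps_i}\cA\subseteq\cA$ for each $i$, and $I_X\in\cA$. Hence every monomial in the generators applied to $I_X$ stays in $\cA$, so $\cB\subseteq\cA$ and $\cA=\cB$. By (ii), $\cB$ is commutative. By (i), $\cB$ is generated by a transposition-closed commuting set, so $\cB^{\mathsf T}=\cB$.

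\emph{Step 3 (the axioms).} Axiom \ref{item:as-identity} is assumed. For \ref{item:as-transpose}: $A_{\balpha}^{\mathsf T}\in\cA$ is a $0$--$1$ matrix. Expanding it in the basis $\{A_{\bbeta}\}$, whose supports are disjoint, gives $A_{\balpha}^{\mathsf T}=\sum_{\bbeta\in S}A_{\bbeta}$ for some set $S$. Since $R_{\balpha}^{\mathsf T}$ is a union of classes for every $\balpha$, and transposition is an involution permuting the finite partition, each $R_{\balpha}^{\mathsf T}$ is a single class. The cleanest route is a counting argument: the map $\balpha\mapsto S_{\balpha}$ sends $|D|$ classes to disjoint nonempty unions, and the union of all $S_{\balpha}$ is $D$, so each $|S_{\balpha}|=1$.

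For \ref{item:as-intersection}: $A_{\balpha}A_{\bbeta}\in\cA$ gives $A_{\balpha}A_{\bbeta}=\sum_{\bgamma}p_{\balpha\bbeta}^{\bgamma}A_{\bgamma}$, and the $(x,y)$ entry of the left side is exactly the count in \ref{item:as-intersection}, so it is constant on each $R_{\bgamma}$. Axiom \ref{item:as-commutativity} is commutativity of $\cA$. By linear independence, the coefficient $c_{i,\balpha}^{\bbeta}$ equals $p_{\eps_i\balpha}^{\bbeta}$. The hypothesis (iii) is then exactly condition \ref{item:p-criterion-intersection} of \cref{prop:P-criterion}, so the scheme is $r$-variate $P$-polynomial on $D$.

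The main obstacle is Step 3's transpose axiom, which needs the counting argument that transposition maps classes to single classes rather than unions. The induction in Step 1 is routine once the well-ordering of $D$ under $\preceq$ is noted.
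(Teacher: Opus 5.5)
Your proof is correct and follows the paper's argument. It uses the same two inclusions between the span of the $A_{\balpha}$ and the algebra generated by $I_X$ and the $A_{\eps_i}$ (induction along $\preceq$ in one direction, hypothesis (iii) in the other), gets commutativity from (ii) and transpose-closure from (i), and draws the $P$-polynomial conclusion from \cref{prop:P-criterion}; your counting argument for (AS2) is a concrete version of the paper's remark that transposition is a Hadamard-product automorphism and therefore permutes the primitive Hadamard idempotents $A_{\balpha}$.

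One small caveat: your Step~1 claim that every exponent of the recursively built $v_{\bgamma}$ lies in $D$ does not follow from that recursion, since multiplying by $x_i$ can push an exponent $\boldsymbol\delta\preceq\balpha$ of $v_{\balpha}$ outside $D$; this does no harm, because you never use the claim.
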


\begin{proof}
Since $\mathcal R$ is surjective, the nonempty relations $R_{\balpha}$ form a
partition of $X\times X$.
In particular, $A_{\balpha}$ are linearly independent and
$\sum_{\balpha\in D}A_{\balpha}=J_X$.
Let $\cA=\CC[A_{\eps_1},\ldots,A_{\eps_r}]$ be the unital matrix algebra
generated by $A_{\eps_1},\ldots,A_{\eps_r}$, and let
$\cB=\Span_{\CC}\{A_{\balpha}:\balpha\in D\}$ be the vector space spanned
by $\{A_{\balpha}: \balpha\in D\}$.
By condition~\ref{item:generator-triangularity}, $\cB$ is preserved by left
multiplication by each generator. Hence $\cA\subseteq\cB$.

We prove the reverse inclusion by induction with respect to $\preceq$.
We have $A_{\bzero}=I_X\in\cA$.
Let $\balpha\ne\bzero$, choose $i$ with $\alpha_i>0$, and put
$\bgamma=\balpha-\eps_i$.
Since $D$ is downward closed, $\bgamma\in D$, and $\bgamma\prec\balpha$.
Applying condition~\ref{item:generator-triangularity} to $\bgamma$ gives
\[
 A_{\eps_i}A_{\bgamma}
 =c_{i,\bgamma}^{\balpha}A_{\balpha}
  +\sum_{\substack{\bbeta\in D\\\bbeta\prec\balpha}}
   c_{i,\bgamma}^{\bbeta}A_{\bbeta},
 \qquad
 c_{i,\bgamma}^{\balpha}\ne0.
\]
By the induction hypothesis, $A_{\bgamma}$ and all the $A_{\bbeta}$ occurring
in the sum belong to $\cA$. Solving this equality for $A_{\balpha}$ therefore
gives $A_{\balpha}\in\cA$.
Thus $\cB\subseteq\cA$, and hence $\cA=\cB$.
By condition~\ref{item:generator-commutativity}, this is a commutative algebra
closed under ordinary matrix multiplication.

By \ref{item:generator-partition}, the relation matrices indexed by
$\eps_1,\ldots,\eps_r$ are closed under transposition. Since $\cA=\cB$,
we have $\cB^{\mathsf T}=\cB$.
Also, since $A_{\balpha}\circ A_{\bbeta}=\delta_{\balpha\bbeta}A_{\balpha}$
and $\sum_{\balpha\in D}A_{\balpha}=J_X$, the space $\cB$ is an algebra
under the Hadamard product as well.
Transposition is an automorphism of $\cB$ with respect to the Hadamard product,
and the pairwise disjoint nonzero $0$--$1$ matrices $A_{\balpha}$ are the
primitive idempotents of this Hadamard algebra.
Hence transposition permutes $\{A_{\balpha}:\balpha\in D\}$.

Since $\cB$ is closed under multiplication, for arbitrary $\balpha,\bbeta\in D$
we can write uniquely
\[
 A_{\balpha}A_{\bbeta}
 =\sum_{\bgamma\in D}p_{\balpha\bbeta}^{\bgamma}A_{\bgamma}.
\]
By \ref{item:as-intersection}, the $p_{\balpha\bbeta}^{\bgamma}$ are
precisely the intersection numbers.
Together with $A_{\bzero}=I_X$, closure under transposition, and commutativity
of $\cB$, this verifies all the conditions in \cref{def:association-scheme}.
Finally, condition~\ref{item:generator-triangularity} and \cref{prop:P-criterion}
show that this association scheme is $r$-variate $P$-polynomial.
\end{proof}

For the multivariate $Q$-polynomial property, we use the following criterion.

\begin{proposition}
\label[proposition]{prop:Q-criterion}
Let $D^*\subseteq\NN^{r^\ast}$ be a finite downward closed subset containing
$\eps_1,\ldots,\eps_{r^\ast}$, and let $\preceq^\ast$ be a monomial order on
$\NN^{r^\ast}$.
Suppose that the primitive idempotents of a commutative association scheme
$\frakX$ are indexed as $\{E_{\balpha}\}_{\balpha\in D^*}$, with
$E_{\bzero}=|X|^{-1}J_X$.
Then the following three conditions are equivalent.
\begin{enumerate}[label=\textup{(\roman*)},ref=\textup{(\roman*)}]
\item\label{item:q-criterion-polynomial}
$\frakX$ is $r^\ast$-variate $Q$-polynomial on $D^*$ with respect to $\preceq^\ast$.
\item\label{item:q-criterion-krein}
For all $1\le i\le r^\ast$ and $\balpha,\bbeta\in D^*$, we have
\[
 q_{\eps_i\balpha}^{\bbeta}\ne0
 \quad\Longrightarrow\quad
 \bbeta\preceq^\ast\balpha+\eps_i.
\]
Moreover, if $\balpha+\eps_i\in D^*$, then
$q_{\eps_i\balpha}^{\balpha+\eps_i}\ne0$.
\item \label{item:q-criterion-krein-triangularity}
For arbitrary $\balpha,\bbeta,\bgamma\in D^*$, we have
\[
 q_{\balpha\bbeta}^{\bgamma}\ne0
 \quad\Longrightarrow\quad
 \bgamma\preceq^\ast\balpha+\bbeta.
\]
Moreover, if $\balpha+\bbeta\in D^*$, then
\[
 q_{\balpha\bbeta}^{\balpha+\bbeta}\ne0.
\]
\end{enumerate}
\end{proposition}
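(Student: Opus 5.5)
The plan is to transfer \cref{prop:P-criterion} to the dual setting, in the same way the paper handled the $P$-polynomial case. The equivalence of \ref{item:q-criterion-polynomial} and \ref{item:q-criterion-krein} should be the dual statement of Proposition~2.15 in \cite{BKZZ2025}, and I would cite it. The implication from \ref{item:q-criterion-polynomial} to \ref{item:q-criterion-krein-triangularity} should likewise be the Krein-parameter analogue of Lemma~2.16(ii) in \cite{HigashitaniKurihara2026}. The implication from \ref{item:q-criterion-krein-triangularity} to \ref{item:q-criterion-krein} is the special case $\balpha=\eps_i$.

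If a self-contained argument is preferred, I would replace matrix multiplication by the Hadamard product throughout. The relevant dictionary is as follows.
\begin{enumerate}[label=\textup{(\alph*)}]
\item The Bose--Mesner algebra with $\circ$ is commutative and associative.
\item Its unit is $F_{\bzero}=J_X$.
\item The basis $\{F_{\balpha}\}_{\balpha\in D^*}$ has structure constants $q_{\balpha\bbeta}^{\bgamma}$, by \eqref{eq:Krein-definition}.
\end{enumerate}
The proof of \cref{prop:P-criterion} uses only these algebraic features, namely a commutative unital algebra with a distinguished basis indexed by a downward closed $D$ and a monomial order. Hence the same argument should go through verbatim.

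Concretely, for \ref{item:q-criterion-krein}$\Rightarrow$\ref{item:q-criterion-polynomial} I would build the polynomials $v_{\balpha}^*$ by induction on $\preceq^*$, exactly as in the proof of \cref{prop:scheme-from-generators}. Choose $i$ with $\alpha_i>0$, set $\bgamma=\balpha-\eps_i$, and solve $F_{\eps_i}\circ F_{\bgamma}=q_{\eps_i\bgamma}^{\balpha}F_{\balpha}+\sum_{\bbeta\prec^*\balpha}(\cdots)$ for $F_{\balpha}$. This yields a leading monomial $x^{\balpha}$ with exponents in $D^*$. Condition (P2) then follows from the triangularity of $F_{\eps_i}\circ F_{\bbeta}$, expanded back into Hadamard monomials.

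For \ref{item:q-criterion-polynomial}$\Rightarrow$\ref{item:q-criterion-krein-triangularity}, write $F_{\balpha}$ and $F_{\bbeta}$ as polynomials with leading monomials $x^{\balpha}$ and $x^{\bbeta}$. Their Hadamard product is then a polynomial whose exponents are all $\preceq^*\balpha+\bbeta$, by compatibility of monomial orders with addition. Using (P2) repeatedly, one reduces each monomial $F^{\circ\bdelta}$ into $\Span\{F_{\bgamma}:\bgamma\preceq^*\bdelta\}$. The coefficient of $F_{\balpha+\bbeta}$ is the product of the leading coefficients, up to a nonzero factor, when $\balpha+\bbeta\in D^*$.

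The main obstacle is this last reduction step. One must check that (P2), which is stated only for single generators times monomials in $D^*$, controls monomials whose exponents may leave $D^*$. The resulting expansion must stay triangular with a nonvanishing top coefficient. I would handle this by induction on $\bdelta$ along $\preceq^*$, which is a well-order because it is a monomial order, mirroring Lemma~2.16 of \cite{HigashitaniKurihara2026}.
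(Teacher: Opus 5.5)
Your proposal matches the paper's proof: the paper cites Proposition~2.16 of \cite{BKZZ2025} for (i)$\Leftrightarrow$(ii) and Lemma~2.18(ii) of \cite{HigashitaniKurihara2026} for (i)$\Rightarrow$(iii), and observes that (iii)$\Rightarrow$(ii) is the special case $\balpha=\eps_i$; these are the $Q$-side statements themselves, so no separate dualization is needed. One caution about your optional self-contained route for (ii)$\Rightarrow$(i): solving $F_{\eps_i}\circ F_{\bgamma}=\cdots$ for $F_{\balpha}$ does not by itself keep every exponent in $D^*$, since multiplying $v^*_{\bgamma}$ by $x_i$ can leave $D^*$, so you should instead show that for each $\balpha\in D^*$ the Hadamard monomial $F_{\eps_1}^{\circ\alpha_1}\circ\cdots\circ F_{\eps_{r^\ast}}^{\circ\alpha_{r^\ast}}$ is a nonzero multiple of $F_{\balpha}$ plus a combination of $F_{\bbeta}$ with $\bbeta\prec^\ast\balpha$, and then invert this triangular system.
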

\begin{proof}
The equivalence of \ref{item:q-criterion-polynomial} and
\ref{item:q-criterion-krein} follows from Proposition~2.16 in
\cite{BKZZ2025}.
The implication from \ref{item:q-criterion-polynomial} to
\ref{item:q-criterion-krein-triangularity} follows from Lemma~2.18(ii) in
\cite{HigashitaniKurihara2026}.
Finally, \ref{item:q-criterion-krein-triangularity} implies
\ref{item:q-criterion-krein}, which completes the proof.
\end{proof}

\section{Johnson association schemes and extensions}\label{sec:scheme-examples}

The eigenmatrices of generalized Johnson association schemes combine factors
from Johnson association schemes and extensions. We give their formulas,
together with the two lemmas used to prove the multivariate
$Q$-polynomial property.

\subsection{Johnson association schemes}\label{subsec:Johnson}
This subsection summarizes basic facts about Johnson association schemes.
For detailed proofs, see \cite{BannaiIto1984}.
Let $N,H$ be nonnegative integers satisfying $0\le H\le N$, and put
$T=\min\{H,N-H\}$.
Write $\binom{[N]}{H}=\{A\subseteq[N]:|A|=H\}$ and define the \emph{Johnson distance} by
\begin{equation}\label{eq:Johnson-distance}
 \delta(A,B)=H-|A\cap B|
 \qquad\bigl(A,B\in\tbinom{[N]}{H}\bigr).
\end{equation}
Then $J(N,H)=\bigl(\binom{[N]}{H},\delta,\{0,1,\ldots,T\}\bigr)$ is 
a symmetric association scheme,
and is called the \emph{Johnson association scheme}.
Write $A_t^{J(N,H)}$ for the adjacency matrix of the relation $\delta(A,B)=t$.
Its valency is
\begin{equation}\label{eq:Johnson-valency}
 \nu_t^{N,H}=\binom Ht\binom{N-H}{t}.
\end{equation}
In particular, the degree of the distance-$1$ Johnson graph is $H(N-H)$.
Throughout, a binomial coefficient is taken to be $0$ if its lower index is
negative or exceeds its upper index.

Index the primitive idempotents $E_k^{J(N,H)}$ by $0\le k\le T$ in the
order specified by the eigenmatrix formulas below. Their multiplicities are
\begin{equation}\label{eq:johnson-dimension}
 d_{k}^{N}:=\rank E_k^{J(N,H)}
 =\binom Nk-\binom N{k-1}
 \qquad(0\le k\le T).
\end{equation}
In particular, $d_{0}^{N}=1$ and
$E_0^{J(N,H)}=\binom NH^{-1}J_{\binom{[N]}{H}}$.

The entries of the first and second eigenmatrices of the Johnson association
scheme are known to be expressed by the following orthogonal polynomials.
For $0\le t,k\le T$, put
\begin{equation}\label{eq:Eberlein}
 \mathsf E_t(N,H;k)
 =\sum_{u=0}^t(-1)^u
 \binom ku\binom{H-k}{t-u}\binom{N-H-k}{t-u}.
\end{equation}
This is an \emph{Eberlein (dual Hahn) polynomial} of degree $t$ in $k(k-N-1)$.
Also, let the \emph{Pochhammer symbol} be
\[
 (a)_0=1,\qquad (a)_s=\prod_{j=0}^{s-1}(a+j)\quad(s\ge1),
\]
and define
\begin{equation}\label{eq:Hahn-eigenvalue}
 \mathsf H_k(N,H;t)
 =d_{k}^{N}\sum_{s=0}^k
 \frac{(-k)_s(k-N-1)_s(-t)_s}
 {(H-N)_s(-H)_s\,s!}.
\end{equation}
The denominator does not vanish in the range $s\le k\le T$.
In particular,
\begin{equation}\label{eq:Hahn-zero-one}
 \mathsf H_k(N,H;0)=d_{k}^{N},\qquad
 \mathsf H_1(N,H;t)=(N-1)\left(1-\frac{N}{H(N-H)}t\right)
 \quad(1\le H\le N-1).
\end{equation}
The polynomial $\mathsf H_k(N,H;x)$ is a real polynomial of degree $k$ in $x$,
called a \emph{Hahn polynomial}.
In particular, $\mathsf H_0(N,H;t)=1$.
In the convention of \eqref{eq:base-PQ}, the eigenmatrices of $J(N,H)$ are
\begin{equation}\label{eq:Johnson-eigenmatrices}
 P_t^{J(N,H)}(k)=\mathsf E_t(N,H;k),\qquad
 Q_k^{J(N,H)}(t)=\mathsf H_k(N,H;t)
 \quad(0\le t,k\le T).
\end{equation}
In particular, $P_t^{J(N,H)}(0)=\nu_t^{N,H}$ and $Q_k^{J(N,H)}(0)=d_{k}^{N}$.
Thus the entries of $E_k^{J(N,H)}$ are given by
\begin{equation}\label{eq:johnson-projector}
 \bigl(E_k^{J(N,H)}\bigr)_{A,B}
 =\frac{1}{\binom NH}
   \mathsf H_k\bigl(N,H;\delta(A,B)\bigr)
 \qquad(A,B\in\binom{[N]}{H}).
\end{equation}
Note that, for $T\ge1$, $J(N,H)$ is a univariate $P$- and $Q$-polynomial association scheme.

By symmetry and \eqref{eq:PQ-entry-duality}, we have
\begin{equation}\label{eq:Johnson-PQ-relation}
 \mathsf E_t(N,H;k)
 =\frac{\nu_t^{N,H}}{d_{k}^{N}}\mathsf H_k(N,H;t).
\end{equation}
We give the orthogonality relation for Hahn polynomials that will be used later.
By \eqref{eq:Q-orthogonality} and \eqref{eq:Johnson-eigenmatrices}, we have
\begin{equation}\label{eq:E-orthogonality-Krein}
 \sum_{t=0}^T\nu_t^{N,H}
 \mathsf H_k(N,H;t)\mathsf H_\ell(N,H;t)
 =\binom NH d_{k}^{N}\delta_{k\ell}
 \qquad(0\le k,\ell\le T).
\end{equation}
Since $\mathsf H_k(N,H;t)$ is a real polynomial of degree $k$ in $t$, it is
orthogonal to every polynomial of degree less than $k$ with respect to the weight
$\nu_t^{N,H}$.
If $H=0$ or $H=N$, then $T=0$, and this formula holds with $t=k=\ell=0$.

\begin{lemma}\label[lemma]{lem:Johnson-quotient-degree}
Let $0\le k\le T$ and let $p\in\RR[x]$ be a nonzero polynomial.
Then
\[
 \sum_{t=0}^T\nu_t^{N,H}\mathsf H_k(N,H;t)p(t)\ne0
 \quad\Longrightarrow\quad \deg p\ge k.
\]
Moreover, if $\deg p=k$, then
\[
 \sum_{t=0}^T\nu_t^{N,H}\mathsf H_k(N,H;t)p(t)\ne0.
\]
\end{lemma}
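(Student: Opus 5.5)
The plan is to combine the orthogonality relation \eqref{eq:E-orthogonality-Krein} with the fact that $\mathsf H_0,\ldots,\mathsf H_T$ form a basis of the polynomials of degree at most $T$, viewed as functions on $\{0,\ldots,T\}$.

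First I would show that $\mathsf H_\ell(N,H;x)$ has degree exactly $\ell$ for $0\le\ell\le T$. Its top coefficient comes from the $s=\ell$ term of \eqref{eq:Hahn-eigenvalue}:
\[
d_\ell^N\,\frac{(-\ell)_\ell(\ell-N-1)_\ell(-1)^\ell}{(H-N)_\ell(-H)_\ell\,\ell!}.
\]
This is nonzero for three reasons. First, $d_\ell^N>0$ for $\ell\le T\le N/2$. Second, $(-\ell)_\ell=(-1)^\ell\ell!\ne0$. Third, $(\ell-N-1)_\ell=\prod_{j=0}^{\ell-1}(\ell-N-1+j)$, and every factor is at most $2\ell-N-2<0$ because $2\ell\le 2T\le N$. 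The denominator is nonzero, as the paper notes.

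For the first claim, suppose $\deg p<k$. Then $p$ is a real polynomial of degree less than $k$, so the orthogonality remark after \eqref{eq:E-orthogonality-Krein} gives $\sum_t\nu_t^{N,H}\mathsf H_k(N,H;t)p(t)=0$. Concretely, $p=\sum_{\ell<k}c_\ell\mathsf H_\ell$ as polynomials, since the $\mathsf H_\ell$ with $\ell<k$ have the exact degrees $0,\ldots,k-1$ and so span the polynomials of degree below $k$. Each term $\sum_t\nu_t^{N,H}\mathsf H_k(N,H;t)\mathsf H_\ell(N,H;t)$ then vanishes by \eqref{eq:E-orthogonality-Krein}. The contrapositive is the first claim.

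For the second claim, take $\deg p=k$ and write $p=c\,\mathsf H_k+r$ with $c\ne0$ and $\deg r<k$. This decomposition exists because $\deg\mathsf H_k=k$ exactly. The $r$-part contributes zero by the previous step, so the sum equals
\[
c\binom NH d_k^N\ne0
\]
by \eqref{eq:E-orthogonality-Krein}.

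The degenerate cases $H\in\{0,N\}$ force $T=k=0$, so $p$ is a nonzero constant and the sum is $\nu_0^{N,H}\mathsf H_0\,p(0)=p(0)\ne0$. The one step needing care is the exact-degree check, which relies on $2\ell\le N$; everything else is routine.
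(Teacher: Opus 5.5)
Your proof is correct and matches the paper's argument. For $\deg p<k$ the sum vanishes by \eqref{eq:E-orthogonality-Krein}, since $\mathsf H_0,\ldots,\mathsf H_{k-1}$ span the polynomials of degree below $k$; for $\deg p=k$ you expand $p$ in the Hahn basis with nonzero top coefficient $c$ and get $c\binom NH d_k^N\ne0$. Your explicit check that $\mathsf H_\ell(N,H;x)$ has exact degree $\ell$, using $2\ell\le N$, is a detail the paper only asserts.
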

\begin{proof}
If $\deg p<k$, the sum vanishes by \eqref{eq:E-orthogonality-Krein},
since the Hahn polynomials of degrees less than $k$ span all polynomials
of degree less than $k$.
If $\deg p=k$, write uniquely
$p(x)=\sum_{j=0}^k c_j\mathsf H_j(N,H;x)$ with $c_k\ne0$.
Then \eqref{eq:E-orthogonality-Krein} gives
\[
 \sum_{t=0}^T\nu_t^{N,H}\mathsf H_k(N,H;t)p(t)
 =\binom NH d_{k}^{N}c_k\ne0.
\]
\end{proof}

\subsection{Extensions and multivariate Krawtchouk polynomials}\label{sec:factorization-prelim}

Let $\frakX=(X,\mathcal R,\{0,1,\ldots,d\})$ be a commutative association
scheme of class $d$, put $v=|X|$, and let $L\in\NN$.
Write $A_0,\ldots,A_d$ and $E_0,\ldots,E_d$ for its adjacency matrices and
primitive idempotents, with $E_0=v^{-1}J_X$.
Set
\[
 D_L^{\mathrm{ext}}=\{\boldsymbol\nu\in\NN^d:|\boldsymbol\nu|\le L\},
 \qquad \nu_0=L-|\boldsymbol\nu|\quad(\boldsymbol\nu\in D_L^{\mathrm{ext}}).
\]
We take zero-fold Cartesian powers to be singletons, empty sums to be $0$,
empty products and empty multinomial coefficients to be $1$, and the empty
tensor product to be the $1\times1$ matrix $(1)$.
For $\mathbf x,\mathbf y\in X^L$ and $0\le i\le d$, put
\[
 \tau_i(\mathbf x,\mathbf y)
 =\bigl|\{\ell\in[L]:(x_\ell,y_\ell)\in R_i\}\bigr|,
\]
so that $\sum_{i=0}^d\tau_i(\mathbf x,\mathbf y)=L$.

\begin{definition}\label[definition]{def:extension}
Define
\[
 \mathcal R^{[L]}\colon X^L\times X^L\longrightarrow D_L^{\mathrm{ext}},
 \qquad
 \mathcal R^{[L]}(\mathbf x,\mathbf y)
 =\bigl(\tau_1(\mathbf x,\mathbf y),\ldots,\tau_d(\mathbf x,\mathbf y)\bigr),
\]
and let $R_{\balpha}^{[L]}=(\mathcal R^{[L]})^{-1}(\balpha)$.
This map is surjective: a label $\balpha$ is realized by choosing a pair
from each nonempty $R_i$ and placing it in $\alpha_i$ coordinates
$(0\le i\le d)$.
The triple
\[
 \frakX^{[L]}=\bigl(X^L,\mathcal R^{[L]},D_L^{\mathrm{ext}}\bigr)
\]
is called the \emph{extension} of $\frakX$ of length $L$.
\end{definition}

For $\boldsymbol\nu\in D_L^{\mathrm{ext}}$, put
\[
 \mathcal T_{\boldsymbol\nu}^{(L)}
 =\left\{(i_1,\ldots,i_L)\in\{0,\ldots,d\}^L:
   \bigl|\{\ell:i_\ell=s\}\bigr|=\nu_s\ (0\le s\le d)\right\}.
\]
The extension of a base association scheme $\frakX$ is a commutative association
scheme; its adjacency matrices and primitive idempotents are
\begin{align}
 A_{\balpha}^{[L]}
 &=\sum_{(i_1,\ldots,i_L)\in\mathcal T_{\balpha}^{(L)}}
   A_{i_1}\otimes\cdots\otimes A_{i_L},
 \label{eq:extension-adjacency-tensor}\\
 E_{\bbeta}^{[L]}
 &=\sum_{(j_1,\ldots,j_L)\in\mathcal T_{\bbeta}^{(L)}}
   E_{j_1}\otimes\cdots\otimes E_{j_L},
 \label{eq:extension-idempotent-tensor}
\end{align}
(cf.~\cite{BKZZ2025}).
With the notation $\binom{L}{r_0,\ldots,r_d}=L!/(r_0!\cdots r_d!)$ for
$r_0+\cdots+r_d=L$, the valencies and multiplicities are given by
\begin{align}
 \kappa_{\balpha}^{(L)}
 &=\binom{L}{L-|\balpha|,\alpha_1,\ldots,\alpha_d}
   \prod_{i=1}^dk_i^{\alpha_i},
\label{eq:extension-valency}\\
 \mu_{\bbeta}^{(L)}
 &=\binom{L}{L-|\bbeta|,\beta_1,\ldots,\beta_d}
   \prod_{j=1}^dm_j^{\beta_j}.
\label{eq:extension-multiplicity}
\end{align}

To express the eigenmatrices, let $p,q$ be nonnegative integers,
$\ba\in\NN^p$, $\bb\in\NN^q$, and $Z=(z_{ij})\in M_{p,q}(\CC)$.
For $C=(c_{ij})\in M_{p,q}(\NN)$, write
\[
 \rho_i(C)=\sum_{j=1}^qc_{ij},\qquad
 \sigma_j(C)=\sum_{i=1}^pc_{ij},\qquad
 |C|=\sum_{i=1}^p\sum_{j=1}^qc_{ij},
\]
and put $\boldsymbol\rho(C)=(\rho_1(C),\ldots,\rho_p(C))$.
When $p=0$ or $q=0$, the matrix set consists of a unique empty matrix.
Define the finite sum
\begin{equation}\label{eq:MT-hypergeom}
 \mathsf F_{p,q}(-\ba,-\bb;-L;Z)
 =
 \sum_{\substack{C\in M_{p,q}(\NN)\\|C|\le L}}
 \frac{\displaystyle
       \prod_{i=1}^p(-a_i)_{\rho_i(C)}
       \prod_{j=1}^q(-b_j)_{\sigma_j(C)}}
      {(-L)_{|C|}}
       \prod_{i=1}^p\prod_{j=1}^q
 \frac{z_{ij}^{c_{ij}}}{c_{ij}!}.
\end{equation}
Only matrices satisfying $\rho_i(C)\le a_i$ and $\sigma_j(C)\le b_j$
contribute, and $(-L)_{|C|}\ne0$ throughout the summation range.
This is the \emph{$(p+1,p+q+2)$-hypergeometric function} of Mizukawa--Tanaka
\cite{MizukawaTanaka2004}, with $n=p$ and $m-n-1=q$ in their notation;
they attribute its definition to K.~Aomoto and I.~M.~Gelfand.

For $0\le j,i\le d$, put
\[
 \omega^P_{ji}=\frac{P_i(j)}{k_i},\qquad
 \omega^Q_{ji}=\frac{Q_j(i)}{m_j}.
\]
These values are $1$ if $j=0$ or $i=0$, and \eqref{eq:PQ-entry-duality} gives
\begin{equation}\label{eq:omega-PQ}
 \omega^P_{ji}
 =\frac{P_i(j)}{k_i}
 =\frac{Q_j(i')}{m_j}
 =\omega^Q_{j,i'}.
\end{equation}
Define the following matrices, with rows indexed by the idempotent labels
$j$ and columns indexed by the relation labels $i$:
\begin{equation}\label{eq:Omega-PQ}
 \Omega^P=(1-\omega^P_{ji})_{1\le j,i\le d},
 \qquad
 \Omega^Q=(1-\omega^Q_{ji})_{1\le j,i\le d}.
\end{equation}
If $\frakX$ is symmetric, then $i'=i$, so $\Omega^P=\Omega^Q$.
In the convention of \eqref{eq:base-PQ}, the eigenmatrices of $\frakX^{[L]}$ are
\begin{align}
 P_{\balpha}^{[L]}(\bbeta)
 &=\kappa_{\balpha}^{(L)}
   \mathsf F_{d,d}(-\bbeta,-\balpha;-L;\Omega^P),
\label{eq:extension-P}\\
 Q_{\bbeta}^{[L]}(\balpha)
 &=\mu_{\bbeta}^{(L)}
   \mathsf F_{d,d}(-\bbeta,-\balpha;-L;\Omega^Q).
\label{eq:extension-Q}
\end{align}
The first formula is Theorem~1.1 and (6) in \cite{MizukawaTanaka2004},
rewritten in our notation; the second follows by applying the same formula
to the dual character algebra, as in Remark~1.3 in \cite{MizukawaTanaka2004}.
For $\bbeta,\balpha\in D_L^{\mathrm{ext}}$, the $\mathsf F_{d,d}$ functions
are called the \emph{multivariate Krawtchouk polynomials} \cite{Mizukawa2011}.

If $L=0$, the extension is the association scheme on a singleton, with
$A_{\bzero}^{[0]}=E_{\bzero}^{[0]}=(1)$ and
$\kappa_{\bzero}^{(0)}=\mu_{\bzero}^{(0)}
=P_{\bzero}^{[0]}(\bzero)=Q_{\bzero}^{[0]}(\bzero)=1$;
\eqref{eq:MT-hypergeom} has only the term $C=0$, equal to $1$.
If $d=0$, the base set $X$ is a singleton, $\NN^0=\{\bzero\}$, and
$\Omega^P,\Omega^Q$ are $0\times0$ matrices. Then $\mathsf F_{0,0}=1$
for every $L$, so all the formulas remain valid.

For $d\ge1$ and $L\ge1$, the extension is both $d$-variate $P$-polynomial
and $d$-variate $Q$-polynomial on $D_L^{\mathrm{ext}}$ with respect to
$\preceq_{\grlex}$ \cite{BKZZ2025}.
The following formulas for the Krein parameters, used in the proof of \cref{thm:Q},
also hold for $L=0$ or $d=0$.

\begin{lemma}\label[lemma]{lem:extension-Q-krein}
Let $L\ge0$.
For $\bbeta,\bgamma,\boldsymbol\delta\in D_L^{\mathrm{ext}}$ and the Krein parameters
$q_{\bbeta\bgamma}^{[L]\boldsymbol\delta}$ of $\frakX^{[L]}$, we have
\begin{equation}
  \label{eq:extension-Q-krein-triangularity}
  q_{\bbeta\bgamma}^{[L]\boldsymbol\delta}\neq 0
  \quad\text{implies}\quad
  \boldsymbol\delta\preceq_{\grlex}\bbeta+\bgamma.
\end{equation}
Moreover, if $\bbeta+\bgamma\in D_L^{\mathrm{ext}}$, that is,
$|\bbeta|+|\bgamma|\le L$, then explicitly
\begin{equation}\label{eq:extension-Q-top-krein}
 q_{\bbeta\bgamma}^{[L]\bbeta+\bgamma}
 =\prod_{j=1}^d\binom{\beta_j+\gamma_j}{\beta_j}>0,
\end{equation}
and this value is independent of $L$.
\end{lemma}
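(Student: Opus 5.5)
We compute the Krein parameters of $\frakX^{[L]}$ directly from the tensor description \eqref{eq:extension-idempotent-tensor}, without using the explicit hypergeometric formula \eqref{eq:extension-Q}.

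\textbf{Step 1: the Hadamard product of tensor idempotents.} Since the Hadamard product of Kronecker products factors, we have
\[
 (v^LE_{j_1}\otimes\cdots\otimes E_{j_L})\circ(v^LE_{k_1}\otimes\cdots\otimes E_{k_L})
 =\bigotimes_{\ell=1}^L (vE_{j_\ell}\circ vE_{k_\ell})
 =\bigotimes_{\ell=1}^L\sum_{m_\ell}q_{j_\ell k_\ell}^{m_\ell}\,vE_{m_\ell}.
\]
Summing over $(j_\ell)\in\mathcal T_{\bbeta}^{(L)}$ and $(k_\ell)\in\mathcal T_{\bgamma}^{(L)}$ and collecting tensor words $(m_\ell)\in\mathcal T_{\boldsymbol\delta}^{(L)}$, we obtain
\[
 q_{\bbeta\bgamma}^{[L]\boldsymbol\delta}
 =\sum_{(j_\ell)\in\mathcal T_{\bbeta},\,(k_\ell)\in\mathcal T_{\bgamma}}\ \prod_{\ell}q_{j_\ell k_\ell}^{m_\ell}
\]
for any fixed word $(m_\ell)\in\mathcal T_{\boldsymbol\delta}^{(L)}$. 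This value is independent of the chosen word by symmetry under coordinate permutations. Every term is nonnegative.

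\textbf{Step 2: the triangularity \eqref{eq:extension-Q-krein-triangularity}.} Use the base facts $q_{0k}^m=\delta_{km}$ and $q_{j0}^m=\delta_{jm}$, which hold because $vE_0=J_X$ is the Hadamard identity. If a term is nonzero, then every coordinate with $j_\ell=k_\ell=0$ has $m_\ell=0$. Hence the number of coordinates with $m_\ell\ne0$ is at most the number with $j_\ell\ne0$ or $k_\ell\ne0$, which is at most $|\bbeta|+|\bgamma|$. This gives $|\boldsymbol\delta|\le|\bbeta|+|\bgamma|$.

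If $|\boldsymbol\delta|<|\bbeta|+|\bgamma|$, then $\boldsymbol\delta\prec_{\grlex}\bbeta+\bgamma$ and we are done. If equality holds, the counting above forces three things:
\begin{enumerate}[label=\textup{(\alph*)}]
\item the supports of $(j_\ell)$ and $(k_\ell)$ are disjoint;
\item every $\ell$ in the union of the supports has $m_\ell\ne0$;
\item at each such $\ell$, exactly one of $j_\ell,k_\ell$ is nonzero, so $m_\ell$ equals that nonzero entry.
\end{enumerate}
Then $\boldsymbol\delta=\bbeta+\bgamma$ exactly. In either case $\boldsymbol\delta\preceq_{\grlex}\bbeta+\bgamma$.

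\textbf{Step 3: the top coefficient \eqref{eq:extension-Q-top-krein}.} Now let $\boldsymbol\delta=\bbeta+\bgamma$ with $|\boldsymbol\delta|\le L$. By Step 2, the only contributing pairs are those in which, for the fixed word $(m_\ell)$, the nonzero positions of $(m_\ell)$ split into the nonzero positions of $(j_\ell)$ and of $(k_\ell)$, with values copied from $(m_\ell)$. Each such pair contributes the product $1$.

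Among the $\delta_s=\beta_s+\gamma_s$ positions where $m_\ell=s$, we must choose which $\beta_s$ go to $(j_\ell)$; the remaining $\gamma_s$ go to $(k_\ell)$. This gives $\prod_s\binom{\beta_s+\gamma_s}{\beta_s}$ choices, which is positive and independent of $L$.

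The cases $L=0$ and $d=0$ are trivial, since all labels are $\bzero$ and the value is $1$.

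The main obstacle is the bookkeeping in Step 1, namely checking that the coefficient is well defined independently of the representative word. This follows from the invariance of $\sum_{\mathcal T}$ under coordinate permutations together with the linear independence of the tensor idempotents.
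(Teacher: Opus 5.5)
Your proof is correct. It differs from the paper mainly in how the triangularity \eqref{eq:extension-Q-krein-triangularity} is obtained.

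\textbf{The paper's route.} The paper does not compute the triangularity at all. For $d,L\ge1$ it imports the $d$-variate $Q$-polynomial property of $\frakX^{[L]}$ with respect to $\preceq_{\grlex}$ from \cite{BKZZ2025}. It then invokes \cref{prop:Q-criterion}\ref{item:q-criterion-krein-triangularity}, which itself rests on a lemma of Higashitani--Kurihara, and it disposes of $d=0$ or $L=0$ separately. Only the top coefficient \eqref{eq:extension-Q-top-krein} is computed from the tensor expansion, by the same disjoint-support count you give in Step~3. That part of your argument is essentially the paper's, written out more carefully.

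\textbf{Your route.} Your Step~1 formula $q_{\bbeta\bgamma}^{[L]\boldsymbol\delta}=\sum\prod_\ell q_{j_\ell k_\ell}^{m_\ell}$, combined with $q_{0k}^{m}=\delta_{km}$, gives a self-contained and elementary proof of a stronger, graded statement. Namely, $q_{\bbeta\bgamma}^{[L]\boldsymbol\delta}\ne0$ forces $|\boldsymbol\delta|\le|\bbeta|+|\bgamma|$, with equality only when $\boldsymbol\delta=\bbeta+\bgamma$. Consequently the conclusion holds for every monomial order that compares total degree first, not just for $\preceq_{\grlex}$. Together with Step~3 and the criterion \cref{prop:Q-criterion} (direction \ref{item:q-criterion-krein}$\Rightarrow$\ref{item:q-criterion-polynomial}), your computation even recovers the $Q$-polynomiality of the extension that the paper cites.

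\textbf{Comparison.} The paper's route is shorter on the page but depends on those external results. Yours makes the lemma independent of them and treats $L=0$ and $d=0$ uniformly.

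\textbf{A minor point on Step~1.} The tensors $\bigotimes_\ell E_{m_\ell}$ are linearly independent, and by definition of the Krein parameters the Hadamard product expands in the $E_{\boldsymbol\delta}^{[L]}$. These two facts already force the coefficient to be constant on $\mathcal T_{\boldsymbol\delta}^{(L)}$. So the coordinate-permutation symmetry is a consistency check rather than a step you actually need.
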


\begin{proof}
For $d,L\ge1$, \eqref{eq:extension-Q-krein-triangularity} follows from
\cref{prop:Q-criterion} \ref{item:q-criterion-krein-triangularity}, since the
extension is $d$-variate $Q$-polynomial with respect to $\preceq_{\grlex}$.
If $d=0$ or $L=0$, the extension is the association scheme on a singleton,
so both formulas are immediate.
To prove \eqref{eq:extension-Q-top-krein} for $d,L\ge1$, consider
$(v^LE_{\bbeta}^{[L]})\circ(v^LE_{\bgamma}^{[L]})$ using the tensor
expression \eqref{eq:extension-idempotent-tensor}.
Since $vE_0=J_X$ is the identity for the Hadamard product, if the positions
of the nontrivial factors $vE_j$ $(j\ge1)$ in the two tensors are disjoint,
their Hadamard product has $\beta_j+\gamma_j$ factors of type $j$, with
coefficient $1$.
If the positions overlap, expanding the products at the common positions
gives tensors with fewer than $|\bbeta|+|\bgamma|$ nontrivial factors.
Thus, when $|\bbeta|+|\bgamma|\le L$, only disjoint positions
contribute to $v^LE_{\bbeta+\bgamma}^{[L]}$.
For a fixed tensor with these factors, there are
$\binom{\beta_j+\gamma_j}{\beta_j}$ ways to choose which factors of type
$j$ come from the first tensor. Multiplying over $j$ gives
\eqref{eq:extension-Q-top-krein}.
\end{proof}

\section{Construction of generalized Johnson association schemes}\label{sec:construction}

Starting from a commutative association scheme $\frakX$, we construct
$\frakJ_{n,h}(\frakX)=\bigl(\Theta,\mathcal{S},\Lambda\bigr)$
on partial maps from $[n]$ to $X$, as described in \cref{sec:introduction}.
We compute products with the basic adjacency matrices and prove that
$\frakJ_{n,h}(\frakX)$ is a commutative association scheme and has the
multivariate $P$-polynomial property.
For the definition and criteria for the multivariate $P$-polynomial property,
see \cref{subsec:orders}.

\subsection{The construction theorem and the multivariate \texorpdfstring{$P$}{P}-polynomial structure}\label{subsec:main-results-P}

Define an order $\preceq_P$ on $\NN^{d+1}=\NN\times\NN^d$ by
\begin{equation}\label{eq:P-block-order}
 (t_1,\balpha_1)\prec_P(t_2,\balpha_2)
 \quad\Longleftrightarrow\quad
 \left\{
 \begin{array}{l}
 t_1<t_2,\quad\text{or}\\
 t_1=t_2\ \text{and}\ \balpha_1\prec_{\grlex}\balpha_2,
 \end{array}
 \right.
\end{equation}
and denote its restriction to $\Lambda$ by the same symbol.
This is a monomial order of $1$-block type, with the first coordinate as one
block and the remaining $d$ coordinates ordered by graded lexicographic order
for $d\ge1$. When $d=0$, it is the usual order on $\NN$.

\begin{theorem}\label{thm:scheme}
For any commutative association scheme
$\frakX=(X,\mathcal R,\{0,1,\ldots ,d\})$, any positive integer $n$,
and any $0\le h\le n$, the following hold.
\begin{enumerate}[label=\textup{(\roman*)},ref=\textup{(\roman*)}]
\item\label{item:scheme-association}
$\frakJ_{n,h}(\frakX)=\bigl(\Theta,\mathcal{S},\Lambda\bigr)$
is a commutative association scheme.
\item\label{item:scheme-P}
Suppose that $1\le h\le n-1$.
Then $\frakJ_{n,h}(\frakX)$ is $(d+1)$-variate $P$-polynomial on $\Lambda$
with respect to $\preceq_P$.
\end{enumerate}
\end{theorem}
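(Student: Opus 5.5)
The plan is to apply \cref{prop:scheme-from-generators} with $r=d+1$, $D=\Lambda$, the order $\preceq_P$, and generators $A_{1,\bzero}$ and $A_{0,\be_i}$ $(1\le i\le d)$. First, $\Lambda$ is finite and downward closed. When $1\le h\le n-1$, it contains all unit vectors, and $\mathcal S$ is surjective onto $\Lambda$: choose domains meeting in $h-t$ points, then realize $\balpha$ on the common part using pairs from $R_i$. The degenerate cases $h=0$ and $h=n$ are handled separately for part \ref{item:scheme-association}. For $h=0$, $\Theta$ is a singleton. For $h=n$, we have $t\equiv0$, so $\frakJ_{n,n}(\frakX)$ is exactly the extension $\frakX^{[n]}$, which is already known to be a commutative scheme; the general case will also cover $h=n$ by dropping the $t$-generator. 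Condition \ref{item:generator-partition} of \cref{prop:scheme-from-generators} holds because $R_{0,\bzero}$ is the diagonal, $A_{1,\bzero}$ is symmetric, and $A_{0,\be_i}^{\mathsf T}=A_{0,\be_{i'}}$.

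For commutativity, I will order $\Theta$ by domain and write each matrix in block form indexed by pairs $(B,C)$ of $h$-subsets. Then $A_{0,\be_i}$ is block diagonal, with block $\sum_{a\in B}I\otimes\cdots\otimes A_i\otimes\cdots\otimes I$ in the coordinate $a$. The $(B,C)$ block of $A_{1,\bzero}$ with $|B\cap C|=h-1$ is the tensor product $I^{\otimes(B\cap C)}\otimes J$, where the $J$ factor couples the lone coordinate of $B$ with that of $C$. The two products agree because $A_iJ=k_iJ=JA_i$ and $A_iA_j=A_jA_i$: on $B\cap C$ both products contribute the same sum, and on the exceptional coordinate both contribute $k_iJ$. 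The $A_{0,\be_i}$ commute among themselves because the base scheme is commutative.

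The main work is condition \ref{item:generator-triangularity}, the computation of $A_{0,\be_i}A_{t,\balpha}$ and $A_{1,\bzero}A_{t,\balpha}$ in terms of the $A_{s,\bgamma}$.

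For $A_{0,\be_i}A_{t,\balpha}$, fix $(\theta,\eta)\in R_{s,\bgamma}$ and count $\zeta$ with the same domain as $\theta$, differing from it in exactly one coordinate $a$, with $(\theta(a),\zeta(a))\in R_i$ and $\mathcal S(\zeta,\eta)=(t,\balpha)$. We need $s=t$. If $a\notin\domn\eta$, then $\bgamma=\balpha$ and the count is $t k_i$. If $a\in\domn\eta$ with $(\theta(a),\eta(a))\in R_l$ and $(\zeta(a),\eta(a))\in R_m$, the count is a sum of terms $\gamma_l\,p_{i m}^{l}$ with $\bgamma=\balpha+\be_l-\be_m$ (where $\be_0:=0$). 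Every such $\bgamma$ satisfies $|\bgamma|\le|\balpha|+1$, with equality only when $\bgamma=\balpha+\be_i$, $l=i$, and $m=0$. That coefficient is $(\alpha_i+1)p_{i0}^{i}=\alpha_i+1\ne0$. All other terms have the same $t$ and smaller grlex degree, so they are $\preceq_P$-smaller.

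For $A_{1,\bzero}A_{t,\balpha}$, the domain of $\zeta$ replaces one point of $\domn\theta$, so the resulting $s$ lies in $\{t-1,t,t+1\}$. The terms with $s\le t$ are $\preceq_P$-below $(t+1,\balpha)$ because the order is of $1$-block type. For $s=t+1$, the domain $\domn\zeta$ must lose a point of $\domn\theta\cap\domn\eta$ and gain a point outside $\domn\eta$. This changes $\alpha_l$ by at most one: $\bgamma=\balpha-\be_l$, or $\bgamma=\balpha$ if $l=0$. Hence $\bgamma\preceq\balpha$. The coefficient of $A_{t+1,\balpha}$ counts $\zeta$ obtained by deleting a point $a$ of $\domn\theta\cap\domn\eta$ on which $\zeta$ and $\eta$ agree in relation $0$, and adding a point outside $\domn\theta\cup\domn\eta$. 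This count is positive, and it requires $t+1\le n-h$, which is exactly the condition $(t+1,\balpha)\in\Lambda$. The remaining point is that every coefficient is independent of the chosen pair $(\theta,\eta)\in R_{s,\bgamma}$. This follows because the counts depend only on the numbers of coordinates of each type together with the intersection numbers of $\frakX$.

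I expect this bookkeeping to be the main obstacle, especially the $s=t+1$ case for $A_{1,\bzero}$ and verifying positivity at the top term. Once it is done, \cref{prop:scheme-from-generators} yields both \ref{item:scheme-association} and \ref{item:scheme-P}.
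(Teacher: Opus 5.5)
Your plan is the paper's proof. It applies \cref{prop:scheme-from-generators} with generators $A_{1,\bzero}$, $A_{0,\be_i}$ and the order $\preceq_P$, handles the endpoint cases via the singleton and $\frakX^{[n]}$, and uses the counting of \cref{lem:generator-products}, which the paper writes out in full in \eqref{eq:C-product} and \eqref{eq:B-product}. Those formulas confirm your claim that every coefficient depends only on the labels. Your block-tensor proof of commutativity is a valid variant. The paper instead reads commutativity off those formulas, e.g.\ $A_{0,\be_i}A_{1,\bzero}=k_iA_{1,\bzero}+A_{1,\be_i}=A_{1,\bzero}A_{0,\be_i}$. Your two surviving pieces $k_i\,I\otimes J$ and $\sum_{a\in B\cap C}(A_i)_a\otimes J$ (with $(A_i)_a$ acting in coordinate $a$) are exactly the $(B,C)$ blocks of $k_iA_{1,\bzero}$ and $A_{1,\be_i}$.

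There is, however, an error in the case you singled out as the main obstacle: for $s=t+1$ you have interchanged the roles of $\theta$ and $\zeta$.
\begin{itemize}
\item With $\domn\zeta=(\domn\theta\setminus\{a\})\cup\{b\}$, taking $a\in\domn\theta\cap\domn\eta$ and $b\notin\domn\eta$ shrinks the common domain with $\eta$. That is the case $s=t-1$, with $\bgamma=\balpha+\be_l$. Also, ``$\zeta$ and $\eta$ agree at $a$'' is meaningless, since $a\notin\domn\zeta$.
\item The correct configuration is $a\in\domn\theta\setminus\domn\eta$ and $b\in\domn\eta\setminus\domn\theta$. Then $\domn\zeta\cap\domn\eta=(\domn\theta\cap\domn\eta)\sqcup\{b\}$, and $\zeta$ agrees with $\theta$ on $\domn\theta\cap\domn\eta$. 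Hence $\bgamma=\balpha-\be_l$, where $l$ is the type of $(\zeta(b),\eta(b))$, and $\bgamma=\balpha$ if $l=0$. This is the conclusion you stated.
\item The top coefficient is $(t+1)^2$: there are $t+1$ choices each for $a$ and $b$, and $\zeta(b)=\eta(b)$ is forced.
\item The inequalities $t+1\le n-h$ and $t+1+|\balpha|\le h$ play no role in this count. They only say that $R_{t+1,\balpha}$ is nonempty.
\end{itemize}
Your paragraph is correct only if read as the reverse construction, producing $\theta$ from a given pair $(\zeta,\eta)\in R_{t,\balpha}$. Read that way, it yields positivity by exhibiting a single triple, once the coefficient is known to be constant on $R_{t+1,\balpha}$. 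Either repair works, but as written the paragraph describes the wrong case.
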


\begin{remark}\label[remark]{rem:endpoint-cases}
When $h=0$, the set $\Theta$ is a singleton consisting only of the partial map
with empty domain (the empty map).
We have $\Lambda=\{(0,\bzero)\}$, and the only relation is
$R_{0,\bzero}=\Delta_{\Theta}=\Theta\times\Theta$,
so the construction in this case is the association scheme on a singleton.
When $h=n$, the elements of $\Theta$ are maps defined on all of $[n]$.
Thus $\Theta$ can be naturally identified with $X^n$, and $t$ is always $0$.
It follows that $\frakJ_{n,n}(\frakX)$ is isomorphic to $\frakX^{[n]}$.
\end{remark}

We now establish the basic properties needed to prove \cref{thm:scheme}
using \cref{prop:scheme-from-generators}.
For $(t,\balpha)\in \Lambda$, put $\alpha_0=h-t-|\balpha|$.

\begin{lemma}\label[lemma]{lem:nonempty-valency}
For every $(t,\balpha)\in \Lambda$, the relation $R_{t,\balpha}$ is nonempty.
In particular, $\mathcal{S}$ is surjective.
The valency of $R_{t,\balpha}$ is
\begin{equation}\label{eq:valency-generalized}
\begin{aligned}
 \kappa_{t,\balpha}
 &=\binom ht\binom{n-h}{t}v^t
 \binom{h-t}{\alpha_0,\alpha_1,\ldots,\alpha_d}
 \prod_{i=1}^{d}k_i^{\alpha_i}\\
 &=\binom ht\binom{n-h}{t}v^t\kappa_{\balpha}^{(h-t)}.
\end{aligned}
\end{equation}
\end{lemma}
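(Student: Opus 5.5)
I will prove this by a direct count: fix $\theta\in\Theta$ and count the $\eta\in\Theta$ with $\mathcal S(\theta,\eta)=(t,\balpha)$, building $\eta$ in independent stages. Since the count will not depend on $\theta$, and will be positive for every $(t,\balpha)\in\Lambda$, nonemptiness, surjectivity of $\mathcal S$, and the valency formula all follow at once.

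The stages are as follows. Put $S=\domn\theta$, with $|S|=h$.
\begin{enumerate}[label=\textup{(\arabic*)}]
\item Choose the common domain $C=\domn\theta\cap\domn\eta\subseteq S$ with $|C|=h-t$. There are $\binom{h}{h-t}=\binom ht$ ways.
\item Choose the rest of the domain, $\domn\eta\setminus C$, as a $t$-subset of $[n]\setminus S$. There are $\binom{n-h}{t}$ ways.
\item Choose the values of $\eta$ on these $t$ new points freely in $X$. There are $v^t$ ways.
\item On $C$, require $\alpha_i$ points $a$ with $(\theta(a),\eta(a))\in R_i$ for each $0\le i\le d$, where $\alpha_0=h-t-|\balpha|\ge0$ because $(t,\balpha)\in\Lambda$.
\end{enumerate}

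For stage (4), first partition $C$ into classes of sizes $\alpha_0,\ldots,\alpha_d$, which gives the multinomial $\binom{h-t}{\alpha_0,\alpha_1,\ldots,\alpha_d}$. Then, for each $a$ in the $i$th class, choose $\eta(a)$ among the $k_i$ elements $y$ with $(\theta(a),y)\in R_i$. This gives $\prod_{i=0}^d k_i^{\alpha_i}$, and since $k_0=1$ it equals $\prod_{i=1}^d k_i^{\alpha_i}$.

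Each such choice produces a distinct $\eta$ with $t(\theta,\eta)=t$ and $\alpha_i(\theta,\eta)=\alpha_i$. Conversely, every $\eta$ with $\mathcal S(\theta,\eta)=(t,\balpha)$ arises exactly once, because $C$, the new domain, the new values, the class partition, and the values on $C$ are all determined by $\eta$. Multiplying the stage counts gives the first line of \eqref{eq:valency-generalized}. Comparing with \eqref{eq:extension-valency} at $L=h-t$, where $L-|\balpha|=\alpha_0$, gives the second line.

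It remains to check that the count is positive, which is the only point needing care. Since $0\le t\le n-h$ and $t\le h$, both binomials $\binom ht$ and $\binom{n-h}{t}$ are nonzero, and $\alpha_0\ge0$ makes the multinomial nonzero. Each $k_i\ge1$ because every $R_i$ is nonempty and $A_i\one=k_i\one$. Also $v\ge1$ since $X$ is nonempty. Hence $\kappa_{t,\balpha}>0$, so $R_{t,\balpha}\ne\varnothing$. The edge cases $h=0$ or $t=0$ are covered by the stated conventions that empty products equal $1$ and $\binom{0}{0}=1$.
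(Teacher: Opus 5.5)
Your proposal is correct and follows essentially the paper's proof: fix $\theta$, count the admissible domains ($\binom ht\binom{n-h}t$), the free values on the new points ($v^t$), the type partition of the common domain (the multinomial), and the $k_i$ choices per coordinate, then conclude positivity factor by factor. The one step the paper includes and you omit is the preliminary check that $\mathcal S$ takes values in $\Lambda$, namely $t\le n-h$ from $2h-|\domn\theta\cap\domn\eta|\le n$ and $t+|\balpha|\le h$ from \eqref{eq:alpha-sum}; the definition presupposes this, and surjectivity onto $\Lambda$ needs it.
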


\begin{proof}
For arbitrary $\theta,\eta\in\Theta$, the inequality
$2h-|\domn\theta\cap\domn\eta|\le n$ holds.
This gives $t(\theta,\eta)\le n-h$. Also, \eqref{eq:alpha-sum} gives
\[
 t(\theta,\eta)+\sum_{i=1}^d\alpha_i(\theta,\eta)\le h.
\]
Thus the map $\mathcal{S}$ in \cref{def:generalized-relation} is well defined.

Fix a partial map $\theta\in\Theta$, and count the partial maps
$\eta\in\Theta$ such that $(\theta,\eta)\in R_{t,\balpha}$.
Put $A=\domn\theta$ and $B=\domn\eta$.
First, there are $\binom ht\binom{n-h}t$ ways to form $B$ by removing $t$ points
from $A$ and adding $t$ points from $[n]\setminus A$.
The values of $\eta$ at points of $B\setminus A$ are arbitrary,
so there are $v^t$ ways to choose them.

The number of ways to partition the $h-t$ coordinates of $A\cap B$ into parts
of sizes $\alpha_0,\alpha_1,\ldots,\alpha_d$ is the multinomial coefficient
$\binom{h-t}{\alpha_0,\ldots,\alpha_d}$.
For each coordinate $a$ assigned type $i$, there are $k_i$ choices of $\eta(a)$
satisfying $(\theta(a),\eta(a))\in R_i$ for the fixed value $\theta(a)$.
Since $k_0=1$, multiplying these numbers gives \eqref{eq:valency-generalized}.
Every factor is positive, so the relation is nonempty.
\end{proof}

Let $i\mapsto i'$ be the correspondence in \ref{item:as-transpose} of
\cref{def:association-scheme}.
We have $0'=0$ and $(i')'=i$.
For $\balpha=(\alpha_1,\ldots,\alpha_d)$, define
\begin{equation}\label{eq:alpha-transpose}
 (\balpha')_i=\alpha_{i'}.
\end{equation}
Thus $\balpha'$ is obtained by permuting the components of $\balpha$
according to the transposition of relations $R_i^{\mathsf T}=R_{i'}$.
\begin{lemma}\label[lemma]{lem:partition-transpose}
We have $R_{0,\bzero}=\Delta_{\Theta}$ and
$R_{t,\balpha}^{\mathsf T}=R_{t,\balpha'}$ for $(t,\balpha)\in \Lambda$.
In particular, if $1\le h\le n-1$ and $\be_1,\ldots,\be_d$ are the
standard unit vectors of $\NN^d$, then
\[
 \{A_{(1,\bzero)},A_{(0,\be_1)},\ldots,A_{(0,\be_d)}\}
\]
is closed under transposition.
\end{lemma}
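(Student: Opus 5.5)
We argue directly from the definitions of $t$ and $\alpha_i$ in \cref{def:generalized-relation}. First, $(\theta,\eta)\in R_{0,\bzero}$ means $t(\theta,\eta)=0$ and $\alpha_i(\theta,\eta)=0$ for $1\le i\le d$. The condition $t=0$ says $\domn\theta=\domn\eta$, since both domains have size $h$. Then \eqref{eq:alpha-sum} gives $\alpha_0(\theta,\eta)=h$, so $(\theta(a),\eta(a))\in R_0=\Delta_X$ for every $a$ in the common domain. Hence $\theta=\eta$. Conversely, the pair $(\theta,\theta)$ clearly has label $(0,\bzero)$. Thus $R_{0,\bzero}=\Delta_\Theta$.

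For the transpose, fix $\theta,\eta\in\Theta$. Since $t$ depends only on $|\domn\theta\cap\domn\eta|$, it is symmetric: $t(\eta,\theta)=t(\theta,\eta)$. For each $a\in\domn\theta\cap\domn\eta$, we have $(\eta(a),\theta(a))\in R_i$ if and only if $(\theta(a),\eta(a))\in R_i^{\mathsf T}=R_{i'}$. Summing over the common domain gives $\alpha_i(\eta,\theta)=\alpha_{i'}(\theta,\eta)$ for $1\le i\le d$, using $0'=0$ so that the index set $\{1,\dots,d\}$ is preserved. By \eqref{eq:alpha-transpose}, this says $\mathcal S(\eta,\theta)=(t,\balpha')$ whenever $\mathcal S(\theta,\eta)=(t,\balpha)$. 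We also need $(t,\balpha')\in\Lambda$; this holds because $|\balpha'|=|\balpha|$, as $i\mapsto i'$ permutes $\{1,\dots,d\}$. Therefore $R_{t,\balpha}^{\mathsf T}\subseteq R_{t,\balpha'}$. Applying the same inclusion to $\balpha'$ and using $(\balpha')'=\balpha$ gives the reverse inclusion, so $R_{t,\balpha}^{\mathsf T}=R_{t,\balpha'}$.

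For the final assertion, assume $1\le h\le n-1$. Then $(1,\bzero)\in\Lambda$, because $1\le n-h$ and $1\le h$. Also $(0,\be_i)\in\Lambda$, because $1\le h$. Since $\bzero'=\bzero$, the first part gives $A_{(1,\bzero)}^{\mathsf T}=A_{(1,\bzero)}$. For the unit vectors, $(\be_i')_j=(\be_i)_{j'}$ equals $1$ exactly when $j'=i$, that is, when $j=i'$. Hence $\be_i'=\be_{i'}$, and $A_{(0,\be_i)}^{\mathsf T}=A_{(0,\be_{i'})}$, which again lies in the set. So the set is closed under transposition. No step is a real obstacle; the only point needing care is checking that $\balpha'$ stays in $\Lambda$ and that $0'=0$, so that index $0$ never mixes with the indices $1,\dots,d$.
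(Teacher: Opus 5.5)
Your proof is correct and follows essentially the same route as the paper. Both arguments identify label $(0,\bzero)$ with equal domains and agreeing values, note that reversing the pair preserves $t$ and replaces each relation index $i$ by $i'$, and use $\be_i'=\be_{i'}$ with $i\mapsto i'$ an involution fixing $0$. You additionally spell out details the paper leaves implicit, namely the double inclusion via $(\balpha')'=\balpha$ and the membership of the generator labels in $\Lambda$.
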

\begin{proof}
Having label $(0,\bzero)$ is equivalent to having the same domain and agreeing
in all values. Hence $R_{0,\bzero}=\Delta_{\Theta}$.
For $(\theta,\eta)\in R_{t,\balpha}$, reversing the pair to $(\eta,\theta)$
preserves the size of the intersection of the domains and replaces the relation
index $i$ in each common coordinate by $i'$.
This gives $R_{t,\balpha}^{\mathsf T}=R_{t,\balpha'}$.

Clearly $R_{1,\bzero}^{\mathsf T}=R_{1,\bzero}$.
Since $\be_i'=\be_{i'}$, we also have
$R_{0,\be_i}^{\mathsf T}=R_{0,\be_{i'}}$ for $1\le i\le d$.
As $0'=0$ and $i\mapsto i'$ is an involution, it permutes $\{1,\ldots,d\}$.
Hence the transpose of every displayed matrix belongs to the same set.
\end{proof}

In what follows, for $\balpha=(\alpha_1,\ldots,\alpha_d)$, put
\[
 \widealpha=(h-t-|\balpha|,\alpha_1,\ldots,\alpha_d)
             =(\alpha_0,\alpha_1,\ldots,\alpha_d)\in\NN^{d+1},
\]
and let $\bu_0,\bu_1,\ldots,\bu_d$ be the standard unit vectors of $\NN^{d+1}$.
Also put
\[
 \pr(a_0,a_1,\ldots,a_d)=(a_1,\ldots,a_d).
\]

The next lemma is needed to verify condition~\ref{item:generator-triangularity}
of \cref{prop:scheme-from-generators}.
The counting used in this lemma is essentially the same as that in
Theorem~4.5 in \cite{BKZZ2025}, but we give it again for the reader's convenience.
Note that we omit any term for which $\bgamma\in\mathbb Z^{d+1}$
has a negative component or $(u,\pr(\bgamma))\notin \Lambda$.
Also, $\bzero$ denotes the zero vector of the required dimension.

\begin{lemma}\label[lemma]{lem:generator-products}
The following hold.
\begin{enumerate}[label=\textup{(\roman*)},ref=\textup{(\roman*)}]
\item\label{item:C-product}
Let $h\ge1$, $1\le i\le d$, and $(t,\balpha)\in \Lambda$.
Then
\begin{equation}\label{eq:C-product}
 A_{0,\be_i}A_{t,\balpha}
 =t k_i A_{t,\balpha}
 +\sum_{j,\ell=0}^{d}
 (\alpha_\ell+1-\delta_{j\ell})p_{ij}^{\ell}
 A_{t,\pr(\widealpha-\bu_j+\bu_\ell)}.
\end{equation}
In particular, if $(t,\balpha+\be_i)\in \Lambda$, then the coefficient of
$A_{t,\balpha+\be_i}$ in $A_{0,\be_i}A_{t,\balpha}$ is $\alpha_i+1$.
Every other term $A_{t,\widetilde{\balpha}}$ satisfies $\widetilde{\balpha}\in\NN^d$ and
$|\widetilde{\balpha}|\le|\balpha|$.
\item\label{item:B-product}
Let $1\le h\le n-1$ and $(t,\balpha)\in \Lambda$.
Then
\begin{align}
 A_{1,\bzero}A_{t,\balpha}
 &=(t+1)^2\sum_{j=0}^{d}k_j
 A_{t+1,\pr(\widealpha-\bu_j)}
 \label{eq:B-product}\\
 &\quad+v t(n-h-t)A_{t,\balpha}\notag\\
 &\quad+t\sum_{j,\ell=0}^{d}
 (\alpha_\ell+1-\delta_{j\ell})k_j
 A_{t,\pr(\widealpha-\bu_j+\bu_\ell)}\notag\\
 &\quad+v(n-h-t+1)\sum_{\ell=0}^{d}(\alpha_\ell+1)
 A_{t-1,\pr(\widealpha+\bu_\ell)}.\notag
\end{align}
In particular, if $(t+1,\balpha)\in \Lambda$, then the coefficient of
$A_{t+1,\balpha}$ in $A_{1,\bzero}A_{t,\balpha}$ is $(t+1)^2$.
Every other term $A_{u,\widetilde{\balpha}}$ satisfies either
$u<t+1$ or $u=t+1$ and $|\widetilde{\balpha}|<|\balpha|$.
\end{enumerate}
\end{lemma}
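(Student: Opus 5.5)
The plan is to compute each coefficient directly as an intersection-type count. For labels $g$ and $(t,\balpha)$, and for $(\theta,\zeta)\in R_{u,\bgamma}$, we have
\[
 (A_gA_{t,\balpha})_{\theta\zeta}
 =\bigl|\{\eta\in\Theta:(\theta,\eta)\in R_g,\ (\eta,\zeta)\in R_{t,\balpha}\}\bigr|.
\]
Fix $(\theta,\zeta)$ and put $A=\domn\theta$ and $B=\domn\zeta$. We then enumerate the $\eta$ adjacent to $\theta$ in the generator relation, and for each one we record how $\mathcal S(\eta,\zeta)$ differs from $\mathcal S(\theta,\zeta)=(u,\bgamma)$. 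If the count depends only on $(u,\bgamma)$, the expansion follows. Each $\eta$ is a local modification of $\theta$ at one or two points, so only the types at those points change. The same-domain case $h\ge1$ (part (i)) is handled first, then the domain-changing case (part (ii)).

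\textbf{Part (i).} If $(\theta,\eta)\in R_{0,\be_i}$, then $\eta$ has domain $A$ and differs from $\theta$ at exactly one point $a$, with $(\theta(a),\eta(a))\in R_i$. Hence $t(\eta,\zeta)=u$, so only labels with $u=t$ occur. We split into two cases according to the position of $a$.
\begin{itemize}
\item If $a\in A\setminus B$, which gives $t$ choices, then $\eta(a)$ can be any of the $k_i$ values and the label is unchanged. This yields the term $tk_iA_{t,\balpha}$.
\item If $a\in A\cap B$, suppose $(\theta(a),\zeta(a))\in R_\ell$ and we require $(\eta(a),\zeta(a))\in R_j$. There are $p_{ij}^{\ell}$ choices of $\eta(a)$, and $\widehat{\mathcal S}(\eta,\zeta)=\widehat{\bgamma}-\bu_\ell+\bu_j$. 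Setting this equal to $\widealpha$ forces $\widehat\bgamma=\widealpha-\bu_j+\bu_\ell$. The number of positions $a$ of type $\ell$ is then $\gamma_\ell=\alpha_\ell+1-\delta_{j\ell}$, which gives \eqref{eq:C-product}.
\end{itemize}
For the leading term, $\widehat\bgamma=\widealpha-\bu_0+\bu_i$ forces $j=0$ and $\ell=i$, because $p_{i0}^{\ell}=\delta_{i\ell}$ rules out every other pair with $j=0$. The coefficient is therefore $\alpha_i+1$. For every other pair $(j,\ell)$, either $\ell=0$ or $j\ne0$, and in both cases $|\pr(\widealpha-\bu_j+\bu_\ell)|\le|\balpha|$.

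\textbf{Part (ii).} If $(\theta,\eta)\in R_{1,\bzero}$, then $\eta$ is obtained by deleting a point $a\in A$ and adding a point $b\in[n]\setminus A$ with an arbitrary value $\eta(b)$; here $\theta$ and $\eta$ agree on $A\setminus\{a\}$. There are four cases according to whether $a\in B$ and whether $b\in B$. We write $u$ for $t(\theta,\zeta)$ and record $t(\eta,\zeta)$ in each case.
\begin{itemize}
\item Case $a\in A\setminus B$, $b\in B\setminus A$: here $t(\eta,\zeta)=u-1$. To obtain the term $A_{t+1,\cdot}$ we take $u=t+1$, which gives $(t+1)^2$ choices of $(a,b)$. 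Requiring $(\eta(b),\zeta(b))\in R_j$ gives $k_j$ choices of $\eta(b)$, and $\widealpha=\widehat\bgamma+\bu_j$. This is the first line of \eqref{eq:B-product}.
\item Case $a\in A\setminus B$, $b\notin A\cup B$: here $t(\eta,\zeta)=u$ and the label is unchanged. There are $t$ choices of $a$, $n-h-t$ choices of $b$, and $v$ choices of $\eta(b)$, which gives $vt(n-h-t)$.
\item Case $a\in A\cap B$ of type $\ell$, $b\in B\setminus A$ with new type $j$: here $t(\eta,\zeta)=u$, and the count produces the third line, with $t$ choices of $b$, $k_j$ choices of $\eta(b)$, and $\alpha_\ell+1-\delta_{j\ell}$ choices of $a$.
\item Case $a\in A\cap B$ of type $\ell$, $b\notin A\cup B$: here $t(\eta,\zeta)=u+1$, so $u=t-1$. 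There are $n-h-t+1$ choices of $b$, $v$ choices of $\eta(b)$, and $\alpha_\ell+1$ choices of $a$. This is the last line.
\end{itemize}

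The final assertions follow by inspection. The only terms with first index $t+1$ are $\pr(\widealpha-\bu_j)$, and $j=0$ gives the coefficient $(t+1)^2k_0=(t+1)^2$ on $A_{t+1,\balpha}$. For $j\ge1$ we get $|\widetilde\balpha|=|\balpha|-1$. The main obstacle is the bookkeeping of orientation: we must solve for $\widehat\bgamma$ in terms of $\widealpha$, and keep track of which relation, $(\theta,\zeta)$ or $(\eta,\zeta)$, carries the type whose multiplicity is counted. We must also check that terms with negative entries or labels outside $\Lambda$ correctly receive coefficient zero.
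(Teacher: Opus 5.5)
Your proposal is correct and follows the paper's proof essentially verbatim. It uses the same entrywise count over the intermediate partial map, the same two-case split for $A_{0,\be_i}$ and four-case split for $A_{1,\bzero}$, and the same bookkeeping $\widehat\bgamma=\widealpha-\bu_j+\bu_\ell$ (respectively $\widealpha-\bu_j$ and $\widealpha+\bu_\ell$). One small precision for part (i): the vector identity $-\bu_j+\bu_\ell=\bu_i-\bu_0$ by itself forces $(j,\ell)=(0,i)$ for the leading label. The fact $p_{i0}^{\ell}=\delta_{i\ell}$ is what is actually needed to kill the remaining pairs with $j=0$, $\ell\ne0$ in the claim $|\widetilde\balpha|\le|\balpha|$.
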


\begin{proof}
We prove \ref{item:C-product}.
Fix two partial maps $\theta,\eta\in\Theta$.
The $(\theta,\eta)$ entry of $A_{0,\be_i}A_{t,\balpha}$ is the number of
partial maps $\xi\in\Theta$ satisfying
$(\theta,\xi)\in R_{0,\be_i}$ and $(\xi,\eta)\in R_{t,\balpha}$.
The maps $\theta$ and $\xi$ have the same domain, and
$(\theta(a),\xi(a))\in R_i$ at exactly one coordinate of that domain.

If this coordinate belongs to $\domn\theta\setminus\domn\eta$, there are
$t$ choices for it.
For the fixed value $\theta(a)$, there are $k_i$ choices of $\xi(a)$,
and the relation between $\theta$ and $\eta$ is unchanged.
Here $\balpha=(\alpha_1,\ldots,\alpha_d)\in\NN^d$ records the number of
common coordinates of $(\theta,\eta)$ belonging to each relation $R_i$.
This gives $t k_iA_{t,\balpha}$.

Next, suppose that the chosen coordinate $a$ belongs to $\domn\theta\cap\domn\eta$.
If $(\xi(a),\eta(a))\in R_j$ and $(\theta(a),\eta(a))\in R_\ell$, then
$\ell$ is the index of the relation containing the fixed pair
$(\theta(a),\eta(a))$.
By the definition of the intersection numbers,
\[
 \bigl|\{y\in X:(\theta(a),y)\in R_i,\ (y,\eta(a))\in R_j\}\bigr|
 =p_{ij}^{\ell}.
\]
Hence there are $p_{ij}^{\ell}$ choices of $\xi(a)\in X$ satisfying these two conditions.
The coordinate of type $j$ between the intermediate point $\xi$ and $\eta$
becomes a coordinate of type $\ell$ between $\theta$ and $\eta$,
so the latter type counts are $\widealpha-\bu_j+\bu_\ell$.
Here type $0$ means a coordinate whose two values agree, since $R_0=\Delta_X$.
For a fixed pair $(\theta,\eta)$ with these counts, there are
$\alpha_\ell+1-\delta_{j\ell}$ choices of the coordinate $a$ of type $\ell$.
Summing over $j,\ell$ gives \eqref{eq:C-product}.

For $|\widetilde{\balpha}|=|\balpha|+1$, it is necessary that $j=0$ and $\ell\ne0$.
Since $A_iA_0=A_i$, we have $p_{i0}^{\ell}=\delta_{i\ell}$,
so only $\ell=i$ occurs in this case.
Its coefficient is $\alpha_i+1$.

Next we prove \ref{item:B-product}.
We count the $(\theta,\eta)$ entry of $A_{1,\bzero}A_{t,\balpha}$.
Since $(\theta,\xi)\in R_{1,\bzero}$, we can write
\[
 \domn\xi=(\domn\theta\setminus\{a\})\cup\{b\}
\]
for some $a\in \domn\theta$ and $b\in[n]\setminus \domn\theta$.
Moreover, $\theta$ and $\xi$ agree on $\domn\theta\cap \domn\xi$, and $\xi(b)$ is arbitrary.
Suppose that $(\xi,\eta)\in R_{t,\balpha}$, and put $u=h-|\domn\theta\cap \domn\eta|$.
Since one point is replaced, $u$ is one of $t+1,t,t-1$.

Consider the case $u=t+1$.
Then we must have $a\in \domn\theta\setminus \domn\eta$ and $b\in \domn\eta\setminus \domn\theta$.
There are $t+1$ choices each for $a$ and $b$.
If $(\xi(b),\eta(b))\in R_j$, there are $k_j$ choices of $\xi(b)$.
The coordinate $b$, which has type $j$ between the intermediate point and $\eta$,
is not in the common domain of $\theta$ and $\eta$, so the latter type counts
are $\widealpha-\bu_j$.
This gives the first line of \eqref{eq:B-product}.

There are two possibilities when $u=t$.
The first is $a\in \domn\theta\setminus \domn\eta$ and $b\in[n]\setminus(\domn\theta\cup \domn\eta)$.
In this case, $|\domn\theta\setminus \domn\eta|=u=t$, so there are $t$ choices for $a$.
Also, $|\domn\theta\cup \domn\eta|=h+t$, so there are $n-h-t$ choices for $b$ and $v$ choices
for $\xi(b)$.
The relation type is unchanged, giving the second line.
The second possibility is $a\in \domn\theta\cap \domn\eta$ and $b\in \domn\eta\setminus \domn\theta$.
Suppose that $(\xi(b),\eta(b))\in R_j$ and $(\theta(a),\eta(a))\in R_\ell$.
There are $t$ choices for $b$, $k_j$ choices for $\xi(b)$, and
$\alpha_\ell+1-\delta_{j\ell}$ choices for $a$.
The type counts become $\widealpha-\bu_j+\bu_\ell$, giving the third line.

If $u=t-1$, then we must have $a\in \domn\theta\cap \domn\eta$ and
$b\in[n]\setminus(\domn\theta\cup \domn\eta)$.
There are $n-h-t+1$ choices for $b$ and $v$ choices for $\xi(b)$.
The coordinate $a$, absent from the common domain of the intermediate point
and $\eta$, becomes a coordinate of type $\ell$ in the common domain of
$\theta$ and $\eta$.
There are $\alpha_\ell+1$ choices for it, and the type counts become
$\widealpha+\bu_\ell$. This gives the fourth line.

Finally, only the case $j=0$ in the first line gives
$(u,\widetilde{\balpha})=(t+1,\balpha)$, and its coefficient is
$(t+1)^2k_0=(t+1)^2$, since $k_0=1$.
For every other term in the first line, $u=t+1$ and
$|\widetilde{\balpha}|=|\balpha|-1$.
All terms in the remaining lines have $u\le t$.
\end{proof}

\begin{lemma}\label[lemma]{lem:commuting}
If $h\ge1$, then $A_{0,\be_1},\ldots,A_{0,\be_d}$ commute with each other.
If $1\le h\le n-1$, then $A_{1,\bzero},A_{0,\be_1},\ldots,A_{0,\be_d}$
commute with each other.
\end{lemma}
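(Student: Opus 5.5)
The cleanest route is to write each generator as a tensor-type operator and check commutation directly, without using the product formulas of \cref{lem:generator-products}. Decompose $\CC^\Theta=\bigoplus_{A\in\binom{[n]}{h}}\CC^{X^A}$, where $\CC^{X^A}$ is spanned by the partial maps with domain $A$. A pair $(\theta,\eta)\in R_{0,\be_i}$ has equal domains. So $A_{0,\be_i}$ is block diagonal, and on the block of $A$ it acts as
\[
 \sum_{a\in A} I\otimes\cdots\otimes A_i^{(a)}\otimes\cdots\otimes I,
\]
with $A_i$ in the tensor slot $a$. This is exactly the adjacency matrix $A^{[h]}_{\be_i}$ of the extension $\frakX^{[h]}$, by \eqref{eq:extension-adjacency-tensor}. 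For $h\ge1$, any two of these block operators commute blockwise, since $A_iA_j=A_jA_i$ in $\frakX$ and operators in different tensor slots commute. This proves the first statement.

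For the second statement, describe $A_{1,\bzero}$ through its blocks. For domains $A,B$ with $|A\cap B|=h-1$, write $A=C\cup\{a\}$ and $B=C\cup\{b\}$. The $(A,B)$ block of $A_{1,\bzero}$ sends $\theta$ to the sum of all $\eta$ that agree with $\theta$ on $C$ and take an arbitrary value at $b$. As a map $\CC^{X^C}\otimes\CC^{X^{\{a\}}}\to\CC^{X^C}\otimes\CC^{X^{\{b\}}}$, this block is $I_{X^C}\otimes \one\one^{\mathsf T}$: the identity on the common slots, tensored with the $1\times v$ all-ones map from slot $a$ to slot $b$ (more precisely, $\one_b\one_a^{\mathsf T}$).

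It then suffices to check $A_{0,\be_i}A_{1,\bzero}=A_{1,\bzero}A_{0,\be_i}$ block by block, that is, $M_BK_{BA}=K_{BA}M_A$, where $M_A=\sum_{c\in A}A_i^{(c)}$ and $K_{BA}$ is the block above. Split $M_A=\sum_{c\in C}A_i^{(c)}+A_i^{(a)}$, and similarly $M_B$ with the extra term $A_i^{(b)}$. The terms with $c\in C$ act on the common tensor factor, where $K_{BA}$ is the identity, so they commute through $K_{BA}$. The remaining terms give
\[
 A_i\one\one^{\mathsf T}=k_i\one\one^{\mathsf T}=\one\one^{\mathsf T}A_i,
\]
using $A_i\one=k_i\one$ and $\one^{\mathsf T}A_i=k_i\one^{\mathsf T}$ from \cref{subsec:association-schemes}. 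Hence the two sides agree on every block, and $A_{1,\bzero}$ commutes with each $A_{0,\be_i}$.

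The main point needing care is the identification of the $(A,B)$ block of $A_{1,\bzero}$ with $I\otimes\one\one^{\mathsf T}$ once the slots $a$ and $b$ are matched. The $v$ choices of $\eta(b)$ produce $\one_b$, and forgetting $\theta(a)$ produces $\one_a^{\mathsf T}$. Once this is set up, the identity $A_i\one\one^{\mathsf T}=\one\one^{\mathsf T}A_i$ is immediate.

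A mild alternative is to compare entries directly: count intermediate $\xi$ for $(\theta,\eta)$ with domains differing in one point, in both orders. In either order the count equals the number of $c\in C$ with $(\theta(c),\eta(c))\in R_i$ plus $k_i$, and it is zero unless $\theta$ and $\eta$ agree on $C$ except at one point of type $i$. The tensor argument above organizes exactly this count.
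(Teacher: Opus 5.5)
Your argument is correct, but it follows a different route from the paper. The paper never sets up the tensor/block picture here. It specializes the counting formulas of \cref{lem:generator-products}:
\begin{itemize}
\item putting $(t,\balpha)=(0,\be_j)$ in part (i) gives $A_{0,\be_i}A_{0,\be_j}=hp_{ij}^0I_\Theta+\sum_{\ell=1}^d p_{ij}^\ell A_{0,\be_\ell}+(1+\delta_{ij})A_{0,\be_i+\be_j}$, which is symmetric in $i,j$ because $p_{ij}^\ell=p_{ji}^\ell$;
\item putting $(t,\balpha)=(1,\bzero)$ in (i) and $(0,\be_i)$ in (ii) gives $A_{0,\be_i}A_{1,\bzero}=A_{1,\bzero}A_{0,\be_i}=k_iA_{1,\bzero}+A_{1,\be_i}$.
\end{itemize}

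The paper's route costs nothing extra, since the counting lemma is needed anyway for the triangularity condition in \cref{prop:scheme-from-generators}. It also yields explicit product expansions rather than bare commutation.

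Your route is independent of that lemma and isolates exactly what is used from $\frakX$:
\begin{itemize}
\item commutativity of the $A_i$ within a single tensor slot;
\item constant row and column sums $k_i=k_{i'}$ of $A_i$, so that $A_iJ_X=J_XA_i$.
\end{itemize}
It also exhibits $A_{0,\be_i}$ as block diagonal with blocks $A^{[h]}_{\be_i}$. This is the imprimitivity structure the paper only makes explicit in \cref{prop:imprimitive-blocks}. One point you leave implicit is worth stating: blocks $(A,B)$ with $|A\cap B|\ne h-1$ vanish on both sides, since $A_{0,\be_i}$ is block diagonal and $A_{1,\bzero}$ is zero there.

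There is one slip in your closing aside. The entry count is not ``the number of $c\in C$ with $(\theta(c),\eta(c))\in R_i$ plus $k_i$''. In either order the count is:
\begin{itemize}
\item $k_i$ if $\theta$ and $\eta$ agree on $C$;
\item $1$ if they differ on $C$ at exactly one point and that point has type $i$;
\item $0$ otherwise.
\end{itemize}
This is exactly the identity $k_iA_{1,\bzero}+A_{1,\be_i}$ that the paper obtains. Your main argument does not depend on this remark.
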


\begin{proof}
Let $1\le i,j\le d$.
Substituting $(t,\balpha)=(0,\be_j)$ into
\cref{lem:generator-products}\ref{item:C-product} and simplifying gives
\begin{equation}\label{eq:CiCj-product}
 A_{0,\be_i}A_{0,\be_j}
 =hp_{ij}^{0}I_{\Theta}
  +\sum_{\ell=1}^{d}p_{ij}^{\ell}A_{0,\be_\ell}
  +(1+\delta_{ij})A_{0,\be_i+\be_j}.
\end{equation}
Here we used $p_{i0}^{\ell}=\delta_{i\ell}$.
If $(0,\be_i+\be_j)\notin \Lambda$, the last term is omitted.
By commutativity of the base association scheme, $p_{ij}^{\ell}=p_{ji}^{\ell}$,
so the right-hand side of \eqref{eq:CiCj-product} is symmetric in $i,j$.
Hence $A_{0,\be_i}A_{0,\be_j}=A_{0,\be_j}A_{0,\be_i}$.

Suppose that $1\le h\le n-1$.
Substituting $(t,\balpha)=(1,\bzero)$ into
\cref{lem:generator-products}\ref{item:C-product}, and
$(t,\balpha)=(0,\be_i)$ into \cref{lem:generator-products}\ref{item:B-product},
gives, respectively,
\[
 A_{0,\be_i}A_{1,\bzero}=k_iA_{1,\bzero}+A_{1,\be_i},\qquad
 A_{1,\bzero}A_{0,\be_i}=k_iA_{1,\bzero}+A_{1,\be_i}.
\]
Here we used $k_0=1$.
If $(1,\be_i)\notin \Lambda$, the term $A_{1,\be_i}$ is omitted.
Thus $A_{0,\be_i}A_{1,\bzero}=A_{1,\bzero}A_{0,\be_i}$.
\end{proof}

\begin{proof}[Proof of \cref{thm:scheme}]
First suppose that $1\le h\le n-1$.
We prove \ref{item:scheme-association} and \ref{item:scheme-P} simultaneously
by verifying that $\frakJ_{n,h}(\frakX)=\bigl(\Theta,\mathcal{S},\Lambda\bigr)$
satisfies \cref{prop:scheme-from-generators}.

By \eqref{eq:D}, the set $\Lambda$ is a finite downward closed subset of
$\NN^{d+1}$ containing the standard unit vectors $(1,\bzero)$ and
$(0,\be_i)$ ($1\le i\le d$).
The surjectivity of $\mathcal{S}$ was proved in \cref{lem:nonempty-valency}.
Condition~\ref{item:generator-partition} of \cref{prop:scheme-from-generators}
follows from \cref{lem:partition-transpose}, and
condition~\ref{item:generator-commutativity} of \cref{prop:scheme-from-generators}
follows from \cref{lem:commuting}.
It remains to verify condition~\ref{item:generator-triangularity} of
\cref{prop:scheme-from-generators}.

By \cref{lem:generator-products}\ref{item:C-product},
\[
 A_{0,\be_i}A_{t,\balpha}
 \in\Span\{A_{u,\widetilde{\balpha}}:(u,\widetilde{\balpha})\in \Lambda,
                    (u,\widetilde{\balpha})\preceq_P(t,\balpha+\be_i)\},
\]
and, if $(t,\balpha+\be_i)\in \Lambda$, its coefficient is $\alpha_i+1\ne0$.
Also, by \cref{lem:generator-products}\ref{item:B-product},
\[
 A_{1,\bzero}A_{t,\balpha}
 \in\Span\{A_{u,\widetilde{\balpha}}:(u,\widetilde{\balpha})\in \Lambda,
                    (u,\widetilde{\balpha})\preceq_P(t+1,\balpha)\},
\]
and, if $(t+1,\balpha)\in \Lambda$, its coefficient is $(t+1)^2\ne0$.
\Cref{prop:scheme-from-generators} therefore shows that
$\frakJ_{n,h}(\frakX)$ is a commutative association scheme and is
$(d+1)$-variate $P$-polynomial on $\Lambda$ with respect to $\preceq_P$.
Thus \ref{item:scheme-association} and \ref{item:scheme-P} hold in this range.

For the remaining cases $h=0,n$, it suffices to prove \ref{item:scheme-association}.
By \cref{rem:endpoint-cases}, $\frakJ_{n,h}(\frakX)$ is a commutative
association scheme in both cases.
\end{proof}

\subsection{The Schurian property}\label{subsec:schurian}

An association scheme is \emph{Schurian} if its relations are the
\emph{orbitals} of a transitive permutation group on its vertex set, namely,
the orbits of the diagonal action on ordered pairs.
A pair $(G,H)$, where $G$ is a finite group and $H$ is a subgroup, is a finite
\emph{Gelfand pair} if the permutation representation of $G$ on $G/H$ is
multiplicity free. A commutative association scheme is Schurian if and only if
it is obtained from a finite Gelfand pair; see \cite{BannaiIto1984,CST2006}.

Ceccherini-Silberstein--Scarabotti--Tolli \cite{CST2006} constructed
$\frakJ_{n,h}(\frakX)$ for Schurian base association schemes.
The following proposition characterizes when $\frakJ_{n,h}(\frakX)$ is Schurian.
When $h=0$, the association scheme $\frakJ_{n,0}(\frakX)$ is the association scheme on a singleton
regardless of the base association scheme and is always Schurian, so we assume $h\ge1$ below.
\begin{proposition}\label[proposition]{prop:GJ-schurian}
Let $1\le h\le n$.
Then $\frakJ_{n,h}(\frakX)$ is Schurian if and only if the base association scheme $\frakX$ is Schurian.
\end{proposition}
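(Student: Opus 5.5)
We prove both directions separately. For ``if'', suppose $\frakX$ is Schurian, realized by a group $G$ acting transitively on $X$ with orbitals $R_0,\ldots,R_d$. We let the wreath product $W=G\wr\mathfrak S_n=G^n\rtimes\mathfrak S_n$ act on partial maps by
\[
 ((g_1,\ldots,g_n),\pi)\cdot\theta=\eta,\qquad \domn\eta=\pi(\domn\theta),\quad \eta(\pi(a))=g_{\pi(a)}\theta(a).
\]
This action preserves $|\domn\theta|$, and it is transitive on $\Theta$: $\mathfrak S_n$ moves any domain to any other, and $G^n$ acts transitively on the values. It also preserves $\mathcal S$, since $\pi$ preserves the size of the intersection of domains and each $g_a$ preserves every $R_i$. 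It remains to show that each $R_{t,\balpha}$ is a single $W$-orbit. Given $(\theta,\eta),(\theta',\eta')\in R_{t,\balpha}$, we proceed in three steps.
\begin{enumerate}[label=\textup{(\arabic*)}]
\item Choose $\pi\in\mathfrak S_n$ mapping $\domn\theta\cap\domn\eta$, $\domn\theta\setminus\domn\eta$ and $\domn\eta\setminus\domn\theta$ onto the corresponding sets for $(\theta',\eta')$. These sets have equal sizes $h-t$, $t$, $t$. On the common domain, $\pi$ should send coordinates of type $i$ to coordinates of type $i$; this is possible because the counts $\alpha_0,\ldots,\alpha_d$ agree.
\item Choose the $g_a$ coordinatewise. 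On a common coordinate of type $i$, transitivity of $G$ on the orbital $R_i$ gives $g$ with $g(\theta(a),\eta(a))=(\theta'(\pi a),\eta'(\pi a))$.
\item On a coordinate in only one of the domains, transitivity of $G$ on $X$ suffices.
\end{enumerate}
Thus the relations are exactly the orbitals of $W$, and $\frakJ_{n,h}(\frakX)$ is Schurian.

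For ``only if'', suppose $\frakJ_{n,h}(\frakX)$ consists of the orbitals of a transitive group $\Gamma\le\mathrm{Sym}(\Theta)$, with $h\ge1$. We recover $\frakX$ as a Schurian scheme from a subconstituent structure.

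\emph{Reduce to fixed domain.} Fix a domain $A$ of size $h$ and let $\Theta_A$ be the set of maps with domain $A$. The union of the relations with $t=0$ is the equivalence relation ``same domain''. This relation is a union of orbitals, so it is $\Gamma$-invariant, and the setwise stabilizer $\Gamma_A$ of the block $\Theta_A$ acts transitively on it, since $\Gamma$ is transitive on $\Theta$. The restricted relations $R_{0,\balpha}\cap(\Theta_A\times\Theta_A)$ are the relations of $\frakX^{[h]}$ on $X^A\cong X^h$. They are orbitals of $\Gamma_A$: any two pairs in the same $R_{0,\balpha}$ inside $\Theta_A$ are related by some $\gamma\in\Gamma$, and that $\gamma$ maps the block to itself. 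Hence $\frakX^{[h]}$ is Schurian, and if $h=1$ we are done.

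\emph{Pass from $\frakX^{[h]}$ to $\frakX$.} Fix $x_0\in X$ and a coordinate $a\in A$. Let $Y\subseteq\Theta_A$ be the set of maps equal to $x_0$ off $a$. Then $Y\cong X$, and the $\frakX^{[h]}$-relations restricted to $Y$ are exactly the $\frakX$-relations, of the form $\be_i$ or $\bzero$. We want a subgroup of $\Gamma_A$ stabilizing $Y$ and acting on it with orbitals $R_i$. The key observation is that $Y$ is determined by orbital data. Take a base point $\theta_0\in Y$. Then
\[
 Y=\{\theta_0\}\cup\{\xi:\ \xi \text{ differs from }\theta_0\text{ in only the coordinate }a\}.
\]
The difficulty is that ``the coordinate $a$'' is not invariant under $\Gamma_A$. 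To handle it, we use the neighbors at $t=1$. Let $\zeta$ be a map with domain $(A\setminus\{a\})\cup\{b\}$ agreeing with $\theta_0$ off $a$. Then $Y$ is exactly the set of $\xi\in\Theta_A$ with $(\xi,\zeta)\in R_{1,\bzero}$. Indeed, $(\xi,\zeta)\in R_{1,\bzero}$ forces $\xi$ to agree with $\zeta$ on the common domain $A\setminus\{a\}$, while $\xi(a)$ is free. So $Y$ is the intersection of an orbital neighborhood of $\zeta$ with the block $\Theta_A$, and the stabilizer $\Gamma_{A,\zeta}$ preserves $Y$.

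We then show that $\Gamma_{A,\zeta}$ acts on $Y$ with orbitals $R_i$. Take $(\xi,\xi'),(\xi'',\xi''')\in Y^2$ of the same type $i$. Every point of $Y$ is in relation $R_{1,\bzero}$ with $\zeta$, and each pair has $\mathcal S$-label $(0,\be_i)$, so the triples $(\xi,\xi',\zeta)$ and $(\xi'',\xi''',\zeta)$ have the same pairwise labels. We need such triples to lie in the same $\Gamma$-orbit, and this is the \textbf{main obstacle}. Orbitals of a group do not in general determine orbits on triples. We plan to use a coherent-configuration argument instead. Let $\Gamma_\zeta$ be the stabilizer of $\zeta$. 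Its orbits on $\Theta$ refine the partition by the label $\mathcal S(\zeta,\cdot)$. We show that $\Gamma_\zeta$ acts transitively on the neighborhood $R_{1,\bzero}(\zeta)$, which is exactly $\bigsqcup_{A'}Y_{A'}$ over the domains $A'$ adjacent to $\operatorname{dom}\zeta$. Then the orbits of $\Gamma_\zeta$ on pairs from this neighborhood give a Schurian scheme whose restriction to $Y$ is a fusion-free copy of $\frakX$.

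If this triple-orbit step fails in general, we fall back on a more explicit approach. We characterize the automorphism group of $\frakJ_{n,h}(\frakX)$ as containing $\mathrm{Aut}(\frakX)\wr\mathfrak S_n$. We then show that any transitive group realizing the orbitals projects, through its action on the blocks $Y$ described above, onto a group whose orbitals on $X$ are exactly $R_0,\ldots,R_d$. This would establish that $\frakX$ is Schurian.
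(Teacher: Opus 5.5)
You handle the ``if'' direction correctly, and more explicitly than the paper, which only cites Ceccherini-Silberstein--Scarabotti--Tolli for it. Your reduction to a block $\Theta_A$ is also correct and matches the paper: the setwise stabilizer $\Gamma_A$ is transitive on each $R_{0,\balpha}\cap(\Theta_A\times\Theta_A)$, so $\frakX^{[h]}$ is Schurian.

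The gap is the passage from $\frakX^{[h]}$ to $\frakX$, which is the real content of the ``only if'' direction, and you leave it open. By anchoring $Y$ to an external point $\zeta$ and working in $\Gamma_{A,\zeta}$, you turn the problem into a statement about orbits on triples. As you note yourself, orbitals do not control those. The proposed coherent-configuration argument is not carried out: nothing shows that the $\Gamma_\zeta$-orbits on pairs in $Y$ are no finer than the $R_i$. The fallback simply restates the goal. Containing $\mathrm{Aut}(\frakX)\wr\mathfrak S_n$ does not help, because $\Gamma$ is arbitrary and $\mathrm{Aut}(\frakX)$ need not be transitive on the $R_i$. Moreover, $\zeta$ requires a point $b\notin A$, so the device is unavailable when $h=n\ge2$, a case the statement includes.

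The missing idea is that $Y$ is determined invariantly by any two of its distinct points, so no point stabilizer is needed.
\begin{itemize}
\item Let $\omega\sim\omega'$ mean $(\omega,\omega')\in\bigcup_{i=1}^dR_{0,\be_i}$, that is, same domain and values differing in exactly one coordinate.
\item Take distinct $\xi,\xi'\in Y$, so they differ only at $a$. A common neighbour $\omega$ must differ from $\xi$ at $a$. If it differed instead at some $c\ne a$, it would differ from $\xi'$ at both $a$ and $c$.
\item Hence $Y=\{\xi,\xi'\}\cup\{\omega:\omega\sim\xi,\ \omega\sim\xi'\}$. Equivalently, $Y$ is the unique maximal clique containing $\xi$ and $\xi'$.
\item Each $R_{0,\be_i}$ is an orbital. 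So if $(\xi,\xi')$ and $(\xi'',\xi''')$ are two pairs of type $i$ in $Y$, any $g\in\Gamma$ carrying the first to the second preserves $\sim$ and therefore satisfies $gY=Y$.
\item Thus the setwise stabilizer of $Y$ is transitive on each $R_i$ with $i\ge1$. It is then transitive on $Y$ because $k_1>0$; the case $|X|=1$ is trivial.
\item Its orbitals on $Y\cong X$ are therefore exactly $R_0,\ldots,R_d$.
\end{itemize}
This is the paper's argument. It works uniformly for all $1\le h\le n$ and never uses neighbours at $t=1$.
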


\begin{proof}
It is shown in \cite{CST2006} that if $\frakX$ is Schurian, then
$\frakJ_{n,h}(\frakX)$ is Schurian, so we prove the converse here.

Suppose that $\frakJ_{n,h}(\frakX)$ is Schurian, and let $\Gamma$ be a
permutation group whose orbitals are its relations.
If $|X|=1$, then $\frakX$ is the association scheme on a singleton and is Schurian.
Henceforth suppose that $|X|\ge2$.
Since $t(\theta,\eta)=0$ is equivalent to equality of the two domains,
this equivalence relation is preserved by $\Gamma$.
Fix an $h$-element subset $A\subseteq[n]$, and put
\[
 \Theta_A=\{\theta\in\Theta:\domn\theta=A\}\cong X^A.
\]
Let $\Gamma_{\Theta_A}=\{g\in\Gamma:g\Theta_A=\Theta_A\}$
be the setwise stabilizer of $\Theta_A$ in $\Gamma$.
Since $\Gamma$ preserves equality of domains, an element $g\in\Gamma$
satisfying $g\theta=\theta'$ with some $\theta,\theta'\in\Theta_A$ must
stabilize $\Theta_A$ setwise. 
Let $(\theta,\eta)$ and $(\theta',\eta')$ belong to
$R_{0,\balpha}\cap(\Theta_A\times\Theta_A)$.
Since $R_{0,\balpha}$ is an orbital of $\Gamma$,
there exists $g\in\Gamma$ such that
$g\theta=\theta'$ and $g\eta=\eta'$.
Since $g\theta=\theta'$ and $\theta,\theta'\in\Theta_A$,
$g$ stabilizes $\Theta_A$ setwise.
Hence $g\in\Gamma_{\Theta_A}$.
Thus $\Gamma_{\Theta_A}$ acts transitively on
$R_{0,\balpha}\cap(\Theta_A\times\Theta_A)$.

Consider the graph on $\Theta_A$ in which two points are adjacent if their values
differ in exactly one coordinate.
Its adjacency relation is the restriction of $\bigcup_{i=1}^dR_{0,\be_i}$
to $\Theta_A\times \Theta_A$, so $\Gamma_{\Theta_A}$ preserves adjacency in this graph.
Fix $a\in A$ and $z\in X^{A\setminus\{a\}}$, and put
\[
 \mathcal{C}=\{\theta\in \Theta_A:\theta(b)=z(b)\text{ for all }b\in A\setminus\{a\}\}.
\]
The set $\mathcal{C}$ is a maximal clique, and it is the unique maximal clique
containing any two of its distinct points.
Indeed, those two points differ only in coordinate $a$, so a point adjacent
to both must agree with both in every other coordinate.

Under the identification $\mathcal{C}\to X$, $\theta\mapsto\theta(a)$,
the relation $R_{0,\be_i}\cap(\mathcal{C}\times\mathcal{C})$ coincides with
$R_i$ of $\mathfrak{X}$.
Fix $i\ge1$, and let $(\theta,\eta)$ and $(\theta',\eta')$
belong to $R_{0,\be_i}\cap(\mathcal C\times\mathcal C)$.
There exists $g\in\Gamma_{\Theta_A}$ such that
$g\theta=\theta'$ and $g\eta=\eta'$.
Then $g\mathcal C$ is a maximal clique containing
$\theta'$ and $\eta'$.
Since $\mathcal C$ is the unique maximal clique containing
these two points, $g\mathcal C=\mathcal C$.
Hence $K=\{g\in\Gamma_{\Theta_A}:g\mathcal{C}=\mathcal{C}\}$ acts transitively
on each $R_i$ $(i\ge1)$.
Since $R_1$ has positive valency and $K$ acts transitively on $R_1$,
$K$ also acts transitively on $\mathcal C$.
Therefore the orbitals of the induced action of $K$ on
$\mathcal C$ are precisely $R_0,\ldots,R_d$.
Thus $\frakX$ is Schurian.
\end{proof}

\begin{remark}\label[remark]{rem:gelfand-realization}
If $\frakX$ is obtained from a finite Gelfand pair $(G,H)$, then this
construction of $\frakJ_{n,h}(\frakX)$ agrees with the construction of generalized Johnson association schemes in terms of
finite Gelfand pairs by Ceccherini-Silberstein--Scarabotti--Tolli \cite{CST2006}.
According to \cite{CST2006}, $\frakJ_{n,h}(\frakX)$ corresponds to the finite
Gelfand pair $(G\wr S_n,(H\wr S_{h})\times (G\wr S_{n-h}))$.
\end{remark}

\section{\texorpdfstring{Primitive idempotents and the multivariate\\
$Q$-polynomial property of $\frakJ_{n,h}$}{Primitive idempotents and the multivariate Q-polynomial property of J(n,h)}}\label{sec:primitive-idempotents_of_GJS}

In this section, we construct the primitive idempotents of $\frakJ_{n,h}(\frakX)$
and determine its two eigenmatrices. We then use their product expressions
to prove the multivariate $Q$-polynomial property.

\subsection{Construction of the primitive idempotents}\label{subsec:primitive-idempotents_of_GJS}

We determine the primitive idempotents and the two eigenmatrices of the
generalized Johnson association scheme $\frakJ_{n,h}(\frakX)=\bigl(\Theta,\mathcal{S},\Lambda\bigr)$.
Put
\[
 \Lambda^\ast=\{(\bbeta,k)\in\NN^d\times\NN:
       k\le n-h,\ |\bbeta|+k\le h\}
\]
for the index set of the primitive idempotents.
The map $(\bbeta,k)\mapsto(k,\bbeta)$ is a bijection from $\Lambda^\ast$ to $\Lambda$.

Let $\{E_j\}_{j=0}^d$ be the primitive idempotents of the base
association scheme $\frakX$, with $E_0=v^{-1}J_X$,
and put $U_j=\im E_j\subseteq\CC^X$.
Then
\[
\CC^X=\bigoplus_{j=0}^d U_j,
\qquad \dim U_j=m_j\quad(0\le j\le d).
\]
Putting $u_0=v^{-1/2}\one$, we have $U_0=\CC u_0$.

Let $\star\notin X$ be a symbol, and let $e_\star$ be a unit vector orthogonal to $\CC^X$.
Equip
\[
 W=\CC e_\star\oplus \CC^X
\]
with the orthogonal direct sum inner product.
Thus $\{e_\star\}\cup\{e_y:y\in X\}$ is the standard orthonormal basis of $W$.
For $B\in\binom{[n]}{h}$ and $a\in[n]$, put
\[
 \cH_a(B)=
 \begin{cases}
  \CC^X,&a\in B,\\
  \CC e_\star,&a\notin B
 \end{cases},
\]
and define the subspace of $W^{\otimes n}$ by
\begin{equation}\label{eq:tensor-weight-space}
 \cH_h=
 \mathop{\bigoplus}_{B\in\binom{[n]}{h}}
 \bigotimes_{a=1}^n\cH_a(B).
\end{equation}
For $\theta\in\Theta$, identify $e_\theta\in\CC^\Theta$ with
\[
 \bigotimes_{a=1}^n f_a(\theta),
 \qquad
 f_a(\theta)=
 \begin{cases}
  e_{\theta(a)},&a\in\domn\theta,\\
  e_\star,&a\notin\domn\theta.
 \end{cases}
\]
This identifies $\CC^\Theta$ with $\cH_h$.

Let $(\bbeta,k)\in \Lambda^\ast$.
By the definition of $\Lambda^\ast$, we have
$0\le h-|\bbeta|\le n-|\bbeta|$ and $0\le k\le\min\{h-|\bbeta|,n-h\}$.
Choose pairwise disjoint subsets $B^{(j)}\subseteq[n]$, $|B^{(j)}|=\beta_j$ $(1\le j\le d)$.
Place $U_j$ in each factor indexed by $B^{(j)}$.
In the remaining $n-|\bbeta|$ factors, place either $\CC e_\star$ or $U_0$,
with $h-|\bbeta|$ factors equal to $U_0$.
The direct sum of these spaces, with $U_0$ in $h-|\bbeta|$ factors and
$\CC e_\star$ in $n-h$ factors, can be identified with
$\CC^{\binom{[n-|\bbeta|]}{h-|\bbeta|}}$.
Using $\im E_k^{J(n-|\bbeta|,h-|\bbeta|)}$ from \cref{subsec:Johnson},
take the subspace corresponding to fixed $B^{(1)},\ldots,B^{(d)}$ to be
\[
 \bigl(\im E_k^{J(n-|\bbeta|,h-|\bbeta|)}\bigr)\otimes
 \bigotimes_{j=1}^dU_j^{\otimes B^{(j)}}.
\]
Denote the orthogonal direct sum of these spaces over all pairwise disjoint
$B^{(1)},\ldots,B^{(d)}$ by $\mathsf W_{\bbeta,k}$.

First we verify that
\begin{equation}\label{eq:orthogonal-decomposition}
 \CC^{\Theta}
 =\bigoplus_{(\bbeta,k)\in \Lambda^\ast}\mathsf W_{\bbeta,k}.
\end{equation}
Applying the orthogonal decomposition
\[
 W=\CC e_\star\oplus 
   \left(\bigoplus_{j=0}^dU_j\right)
\]
to each factor in \eqref{eq:tensor-weight-space} decomposes $\cH_h$ into
an orthogonal direct sum of subspaces with either $\CC e_\star$ or $U_j$
in each factor.
Once the positions of $U_j$ $(1\le j\le d)$ are fixed as $B^{(j)}$,
the remaining $n-|\bbeta|$ factors contain $h-|\bbeta|$ copies of $U_0$.
Identify this part with $\CC^{\binom{[n-|\bbeta|]}{h-|\bbeta|}}$
and decompose it orthogonally into the images of the primitive idempotents
of the Johnson association scheme.
The condition $0\le k\le\min\{h-|\bbeta|,n-h\}$ is precisely the condition
$(\bbeta,k)\in \Lambda^\ast$, giving \eqref{eq:orthogonal-decomposition}.
Also, counting the choices of $B^{(1)},\ldots,B^{(d)}$ and taking the dimensions of
the factors gives
\begin{equation}\label{eq:globalmultiplicity}
 \dim\mathsf W_{\bbeta,k}
 =\binom{n}{n-|\bbeta|,\beta_1,\ldots,\beta_d}
  d_{k}^{n-|\bbeta|}\prod_{j=1}^dm_j^{\beta_j}
 =\binom{n}{|\bbeta|} d_{k}^{n-|\bbeta|}\,
  \frac{|\bbeta|!}{\beta_1!\cdots\beta_d!}
  \prod_{j=1}^dm_j^{\beta_j}.
\end{equation}
All these dimensions are positive.
Henceforth denote the value in \eqref{eq:globalmultiplicity} by $M_{\bbeta,k}$.
Let $E_{\bbeta,k}$ be the orthogonal projection onto $\mathsf W_{\bbeta,k}$.

\subsection{Primitive idempotents and product expressions for the eigenmatrices}\label{subsec:hypergeom}

We show that the orthogonal projections $E_{\bbeta,k}$ constructed above are
the primitive idempotents of $\frakJ_{n,h}(\frakX)$ and compute the corresponding eigenmatrices.

\begin{theorem}\label{thm:factorization-main}
The primitive idempotents of $\frakJ_{n,h}(\frakX)$ are
$\{E_{\bbeta,k}:(\bbeta,k)\in \Lambda^\ast\}$.
For $(t,\balpha)\in \Lambda$ and $(\bbeta,k)\in \Lambda^\ast$,
the first and second eigenmatrices are given, respectively, by
\begin{equation}\label{eq:P-factorization}
 P_{t,\balpha}(\bbeta,k)
 =
 \begin{cases}
 \displaystyle
 v^t\,\mathsf E_t(n-|\bbeta|,h-|\bbeta|;k)\,
 \kappa_{\balpha}^{(h-t)}
 \mathsf F_{d,d}(-\bbeta,-\balpha;-(h-t);\Omega^P),
 &|\bbeta|\le h-t,\\[2mm]
 0,&|\bbeta|>h-t,
 \end{cases}
\end{equation}
and
\begin{equation}\label{eq:Q-factorization}
 Q_{\bbeta,k}(t,\balpha)
 =
 \begin{cases}
 \displaystyle
 \frac{\binom{n}{|\bbeta|}}{\binom{h}{|\bbeta|}}\,
 \mathsf H_k(n-|\bbeta|,h-|\bbeta|;t)\,
 \mu_{\bbeta}^{(h-t)}
 \mathsf F_{d,d}(-\bbeta,-\balpha;-(h-t);\Omega^Q),
 &|\bbeta|\le h-t,\\[2mm]
 0,&|\bbeta|>h-t.
 \end{cases}
\end{equation}

\end{theorem}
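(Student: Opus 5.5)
The plan is to compute $A_{t,\balpha}$ acting on each $\mathsf W_{\bbeta,k}$ directly, using the tensor model $\CC^\Theta\cong\cH_h\subseteq W^{\otimes n}$. Since the subspaces $\mathsf W_{\bbeta,k}$ are pairwise orthogonal, sum to $\CC^\Theta$ by \eqref{eq:orthogonal-decomposition}, and number exactly $|\Lambda^\ast|=|\Lambda|=\dim\cA(\frakJ_{n,h}(\frakX))$, it suffices to prove one fact. Namely, each $A_{t,\balpha}$ acts on $\mathsf W_{\bbeta,k}$ as the scalar given in \eqref{eq:P-factorization}, and these scalar vectors are pairwise distinct as functions of $(\bbeta,k)$. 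Distinctness can be replaced by a dimension count. The Bose--Mesner algebra has dimension $|\Lambda|$, and if every $A_{t,\balpha}$ is scalar on every $\mathsf W_{\bbeta,k}$, then the projections $E_{\bbeta,k}$ refine the common eigenspace decomposition. The number of maximal common eigenspaces equals $|\Lambda|$, so each $\mathsf W_{\bbeta,k}$ must be a full maximal eigenspace. Hence the $E_{\bbeta,k}$ are the primitive idempotents. Then $Q$ follows from $P$ via \eqref{eq:PQ-entry-duality}, using $\kappa_{t,\balpha}$ from \cref{lem:nonempty-valency} and $M_{\bbeta,k}$ from \eqref{eq:globalmultiplicity}.

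For the scalar action, write $A_{t,\balpha}$ as an operator on $W^{\otimes n}$ restricted to $\cH_h$. Introduce on $W$ the operators $A_i$ (on $\CC^X$, zero on $e_\star$) and the rank-one maps $e_\star\mapsto$ (anything in $\CC^X$) and $\CC^X\to e_\star$. Then $A_{t,\balpha}$ is the sum, over choices of $t$ positions leaving the domain, $t$ positions entering, and a type assignment on the $h-t$ common positions, of tensor products. These products have the following factors:
\begin{itemize}
\item at a common position, $A_i$;
\item at a leaving position, $e_\star\langle \one,\cdot\rangle$;
\item at an entering position, $\one\langle e_\star,\cdot\rangle$;
\item elsewhere, the identity on $e_\star$.
\end{itemize}

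Apply this to a vector in the summand with $U_j$ fixed on $B^{(j)}$. A leaving factor kills $U_j$ for $j\ge1$, since $\one\perp U_j$. So every position of $B^{(j)}$ must be a common position, and there $A_i$ acts on $U_j$ by $P_i(j)$. The positions carrying $U_0$ or $e_\star$ behave exactly like the Johnson scheme $J(n-|\bbeta|,h-|\bbeta|)$, with an extra factor $v$ per entering position, because $\one\langle u_0,\cdot\rangle$ contributes $v^{1/2}\cdot v^{1/2}$ in normalized form. On these positions the common types $i$ act on $u_0$ by $k_i$.

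Grouping terms, the scalar factors as follows. First comes $v^t$ times the Johnson eigenvalue $\mathsf E_t(n-|\bbeta|,h-|\bbeta|;k)$, which accounts for the leave/enter pattern among the non-$B$ positions. Then comes a sum over ways to distribute the types $\balpha$ on the $h-t$ common positions, which include all of $\bigcup B^{(j)}$; this requires $|\bbeta|\le h-t$, otherwise the value is $0$. That sum is exactly the eigenvalue of the extension $\frakX^{[h-t]}$ on $E^{[h-t]}_{\bbeta}$, namely $\kappa_{\balpha}^{(h-t)}\mathsf F_{d,d}(-\bbeta,-\balpha;-(h-t);\Omega^P)$ by \eqref{eq:extension-P}.

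The main obstacle is this decoupling. The common set $C=\domn\theta\cap\domn\eta$ has size $h-t$ and must contain $\bigcup B^{(j)}$, while the Johnson part sees only $C\setminus\bigcup B^{(j)}$. I would first fix the sets and check that the count of type assignments on $C$, weighted by $\prod P_i(j)$ on $B$-positions and $\prod k_i$ on the rest, depends only on $|C|=h-t$ and $\bbeta$. It does, because this is just the $(\bbeta)$-eigenvalue of $A^{[h-t]}_{\balpha}$ via the tensor form \eqref{eq:extension-adjacency-tensor}, \eqref{eq:extension-idempotent-tensor}. The remaining sum over sets is then the Johnson adjacency $A_t^{J(n-|\bbeta|,h-|\bbeta|)}$ acting on $\im E_k$.

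Finally, $Q$ comes from duality. Compute $Q_{\bbeta,k}(t,\balpha)=M_{\bbeta,k}P_{t,\balpha'}(\bbeta,k)/\kappa_{t,\balpha}$ and simplify with \eqref{eq:Johnson-PQ-relation}, \eqref{eq:omega-PQ} and the extension duality. The binomial prefactor $\binom n{|\bbeta|}/\binom h{|\bbeta|}$ should emerge from the ratio of $\binom{n}{|\bbeta|}$ in $M_{\bbeta,k}$ and the ratio $\binom ht\binom{n-h}t/\nu_t^{n-|\bbeta|,h-|\bbeta|}$ together with $\mu^{(h)}_{\bbeta}$ versus $\mu^{(h-t)}_{\bbeta}$. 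Checking this cancellation is routine binomial algebra.
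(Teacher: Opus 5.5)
Your proposal is correct, but it runs dual to the paper's argument. The paper never lets $A_{t,\balpha}$ act on $\mathsf W_{\bbeta,k}$. Instead it computes the matrix entries $(E_{\bbeta,k})_{\theta,\eta}=\langle E_{\bbeta,k}e_\theta,E_{\bbeta,k}e_\eta\rangle_\Theta$ summand by summand. It evaluates the sum over the placements of $B^{(1)},\ldots,B^{(d)}$ inside $\domn\theta\cap\domn\eta$ as the coefficient $[z_1^{\beta_1}\cdots z_d^{\beta_d}]\prod_{i}\bigl(1+\sum_j z_jQ_j(i)\bigr)^{\alpha_i}$, and identifies this with $\mu_{\bbeta}^{(h-t)}\mathsf F_{d,d}(-\bbeta,-\balpha;-(h-t);\Omega^Q)$ via Mizukawa's coefficient formulas. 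Since the entries depend only on $(t,\balpha)$, it concludes $E_{\bbeta,k}\in\cA(\frakJ_{n,h}(\frakX))$, and the count $|\Lambda^\ast|=|\Lambda|$ then gives primitivity. It reads off $Q$ first and obtains $P$ by duality.

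You go the other way. You write $A_{t,\balpha}$ as a sum of local tensor operators and show that each fixed-$B^{(j)}$ summand is mapped to itself by $v^tP^{[h-t]}_{\balpha}(\bbeta)\,A_t^{J(n-|\bbeta|,h-|\bbeta|)}\otimes\mathrm{id}$. You identify the extension factor directly from \eqref{eq:extension-P}, and then get $Q$ by duality. Your decoupling step is sound: the image of any $E_{j_1}\otimes\cdots\otimes E_{j_{h-t}}$ with content $\bbeta$ lies in $\im E^{[h-t]}_{\bbeta}$, so the weighted type count does not depend on where the $B$-positions sit in the common set. Your primitivity argument (scalar action on each nonzero $\mathsf W_{\bbeta,k}$ refines the common eigenspace decomposition, and equal counts force equality) is a valid substitute for the paper's basis argument. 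Both routes rely on \cref{thm:scheme} for $\dim\cA=|\Lambda|$.

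One small clarification on the power of $v$: it comes as $v^{1/2}$ from each leaving factor, since $\langle\one,u_0\rangle=v^{1/2}$, and $v^{1/2}$ from each entering factor, since $\one=v^{1/2}u_0$. With $t$ of each, this gives $v^t$.

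As for what each route buys: yours shows directly why $\mathsf W_{\bbeta,k}$ is a common eigenspace, and it needs only the first-eigenmatrix formula of the extension rather than the coefficient-extraction identity. The paper's route produces the explicit projection kernel \eqref{eq:projector-kernel} and the generating-function expression \eqref{eq:extension-Q-coefficient}. Both are reused later: the kernel for $E_{\bzero,k}$ in \cref{prop:dual-closed-subset}, and the generating function for the polynomiality in \cref{prop:bs-polynomial}.
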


\begin{proof}
We compute the entry
$(E_{\bbeta,k})_{\theta,\eta}=\langle E_{\bbeta,k}e_\theta,E_{\bbeta,k}e_\eta\rangle_\Theta$
of the orthogonal projection $E_{\bbeta,k}$ onto $\mathsf W_{\bbeta,k}$
at $(\theta,\eta)\in R_{t,\balpha}$.
We have $|\domn\theta\cap\domn\eta|=h-t$, and put $\alpha_0=h-t-|\balpha|$.
For each $a\in \domn\theta\cap\domn\eta$, choose $i_a$ such that
$(\theta(a),\eta(a))\in R_{i_a}$.
A contribution from fixed $B^{(1)},\ldots,B^{(d)}$ requires
$B^{(1)}\sqcup\cdots\sqcup B^{(d)}\subseteq \domn\theta\cap\domn\eta$.
Indeed, if $a\in B^{(j)}$ does not belong to $\domn\theta$, then the $a$th
factor of the standard basis tensor $e_\theta$ is $e_\star$, which is
orthogonal to $U_j$. Hence the projection of $e_\theta$ onto the summand
corresponding to these fixed sets is zero. The same argument applies to
$e_\eta$ if $a\notin\domn\eta$. In either case, the contribution of this
summand to the projection entry is zero.
Thus, if $|\bbeta|>h-t$, then $(E_{\bbeta,k})_{\theta,\eta}=0$.

Henceforth suppose that $|\bbeta|\le h-t$.
When $B^{(1)}\sqcup\cdots\sqcup B^{(d)}\subseteq \domn\theta\cap\domn\eta$, the product of the projection
entries on the $B^{(j)}$ is, by \eqref{eq:base-PQ},
\[
 \prod_{j=1}^d\prod_{a\in B^{(j)}}(E_j)_{\theta(a),\eta(a)}
 =v^{-|\bbeta|}\prod_{j=1}^d\prod_{a\in B^{(j)}}Q_j(i_a).
\]
In the remaining $n-|\bbeta|$ factors, both sets
\[
\begin{aligned}
 T_\theta&=\domn\theta\setminus\bigl(B^{(1)}\sqcup\cdots\sqcup B^{(d)}\bigr),\\
 T_\eta&=\domn\eta\setminus\bigl(B^{(1)}\sqcup\cdots\sqcup B^{(d)}\bigr)
\end{aligned}
\]
have $h-|\bbeta|$ elements, and the Johnson distance between $T_\theta$ and $T_\eta$ is
\[
 h-|\bbeta|-|T_\theta\cap T_\eta|=h-|\bbeta|-(h-t-|\bbeta|)=t.
\]
To realize the Johnson association scheme in these tensor factors, put
$C=[n]\setminus\bigcup_{j=1}^d B^{(j)}$ and $H=h-|\bbeta|$.
For $T\in\binom{C}{H}$, let $b_T$ be the tensor with $u_0$ in the positions
indexed by $T$ and $e_\star$ in those indexed by $C\setminus T$.
The tensors $b_T$ form an orthonormal family, and the isometry $e_T\mapsto b_T$
identifies $\CC^{\binom{C}{H}}$ with their span.
We transfer the projection $E_k^{J(n-|\bbeta|,H)}$ to this span by this isometry.
Thus, by \eqref{eq:johnson-projector}, the $(T_\theta,T_\eta)$ entry of
$E_k^{J(n-|\bbeta|,h-|\bbeta|)}$ is
$\binom{n-|\bbeta|}{h-|\bbeta|}^{-1}\mathsf H_k(n-|\bbeta|,h-|\bbeta|;t)$.
Here $t\le h-|\bbeta|$ and $t\le n-h$, so the Hahn polynomial is evaluated
within its domain.
Since $\langle u_0,e_y\rangle_X=v^{-1/2}$, a factor
$v^{-(h-|\bbeta|)/2}$ occurs on each of the $\theta$ and $\eta$ sides.
Hence the contribution from fixed $B^{(1)},\ldots,B^{(d)}$ is
\[
 \frac{v^{-h}}{\binom{n-|\bbeta|}{h-|\bbeta|}}\mathsf H_k(n-|\bbeta|,h-|\bbeta|;t)
 \prod_{j=1}^d\prod_{a\in B^{(j)}}Q_j(i_a).
\]

Sum over all $B^{(1)},\ldots,B^{(d)}$.
Write $[z_1^{\beta_1}\cdots z_d^{\beta_d}]f$ for the coefficient of
$z_1^{\beta_1}\cdots z_d^{\beta_d}$ in a polynomial $f$ in the variables $z_1,\ldots,z_d$.
By $Q_j(0)=m_j$ and the coefficient formulas (1) and (2) in
\cite{Mizukawa2011}, we obtain
\begin{equation}\label{eq:extension-Q-coefficient}
\begin{aligned}
 &\sum_{\substack{B^{(1)}\sqcup\cdots\sqcup B^{(d)}\subseteq \domn\theta\cap\domn\eta\\
                   |B^{(j)}|=\beta_j\ (1\le j\le d)}}
    \prod_{j=1}^d\prod_{a\in B^{(j)}}Q_j(i_a)\\
 &\qquad=[z_1^{\beta_1}\cdots z_d^{\beta_d}]
   \prod_{i=0}^d\left(1+\sum_{j=1}^dz_jQ_j(i)\right)^{\alpha_i}\\
 &\qquad=\binom{h-t}{h-t-|\bbeta|,\beta_1,\ldots,\beta_d}
   \prod_{j=1}^dm_j^{\beta_j}\,
   \mathsf F_{d,d}(-\bbeta,-\balpha;-(h-t);\Omega^Q)\\
 &\qquad=\mu_{\bbeta}^{(h-t)}
   \mathsf F_{d,d}(-\bbeta,-\balpha;-(h-t);\Omega^Q).
\end{aligned}
\end{equation}
The last equality uses \eqref{eq:extension-multiplicity}.
Thus we have
\begin{equation}\label{eq:projector-kernel}
 (E_{\bbeta,k})_{\theta,\eta}
 =\begin{cases}
 \displaystyle
 \frac{v^{-h}}{\binom{n-|\bbeta|}{h-|\bbeta|}}\mathsf H_k(n-|\bbeta|,h-|\bbeta|;t)
 \mu_{\bbeta}^{(h-t)}
 \mathsf F_{d,d}(-\bbeta,-\balpha;-(h-t);\Omega^Q),&|\bbeta|\le h-t,\\[2mm]
 0,&|\bbeta|>h-t.
 \end{cases}
\end{equation}
Since this value depends only on $(t,\balpha)$, each $E_{\bbeta,k}$ belongs
to the Bose--Mesner algebra of $\frakJ_{n,h}(\frakX)$.
By \eqref{eq:orthogonal-decomposition}, these are mutually orthogonal nonzero
projections whose sum is $I_{\Theta}$.
Their number $|\Lambda^\ast|=|\Lambda|$ equals the dimension of the algebra,
so these projections form a basis of the algebra.
Hence each $\cA(\frakJ_{n,h}(\frakX))E_{\bbeta,k}$ is one-dimensional,
and $E_{\bbeta,k}$ is a primitive idempotent.
This gives a complete system of primitive idempotents and their multiplicities
\eqref{eq:globalmultiplicity}.

Next we determine the second eigenmatrix.
Let $Q_{\bbeta,k}(t,\balpha)$ be the value taken by
$|\Theta|E_{\bbeta,k}$ on $R_{t,\balpha}$.
From $|\Theta|=\binom nhv^h$ and
$\binom nh/\binom{n-|\bbeta|}{h-|\bbeta|}
=\binom{n}{|\bbeta|}/\binom{h}{|\bbeta|}$,
when $|\bbeta|\le h-t$, \eqref{eq:projector-kernel} gives
\begin{equation}\label{eq:second-eigenmatrix}
 Q_{\bbeta,k}(t,\balpha)
 =\frac{\binom{n}{|\bbeta|}}{\binom{h}{|\bbeta|}}\,
  \mathsf H_k(n-|\bbeta|,h-|\bbeta|;t)\mu_{\bbeta}^{(h-t)}
  \mathsf F_{d,d}(-\bbeta,-\balpha;-(h-t);\Omega^Q),
\end{equation}
which is the first line of \eqref{eq:Q-factorization}.
When $|\bbeta|>h-t$, \eqref{eq:projector-kernel} gives
$Q_{\bbeta,k}(t,\balpha)=0$, proving the second line as well.

Finally, we determine the first eigenmatrix.
Let $P_{t,\balpha}(\bbeta,k)$ be the eigenvalue of $A_{t,\balpha}$ on
$E_{\bbeta,k}$, and put $\balpha'=(\alpha_{1'},\ldots,\alpha_{d'})$.
Applying \eqref{eq:PQ-entry-duality} gives
\begin{equation}\label{eq:global-PQ-duality}
 \frac{P_{t,\balpha}(\bbeta,k)}{\kappa_{t,\balpha}}
 =\frac{Q_{\bbeta,k}(t,\balpha')}{M_{\bbeta,k}}.
\end{equation}
When $|\bbeta|>h-t$, the right-hand side vanishes, so the first eigenvalue
also vanishes.
Henceforth suppose that $|\bbeta|\le h-t$.
By \eqref{eq:omega-PQ} and \eqref{eq:Omega-PQ}, relabeling the columns by
$i\mapsto i'$ in the sum \eqref{eq:MT-hypergeom} gives
\[
 \mathsf F_{d,d}(-\bbeta,-\balpha';-(h-t);\Omega^Q)
 =\mathsf F_{d,d}(-\bbeta,-\balpha;-(h-t);\Omega^P).
\]
Also, \eqref{eq:globalmultiplicity} and \eqref{eq:extension-multiplicity} give
\[
 M_{\bbeta,k}
 =\frac{\binom{n}{|\bbeta|}}{\binom{h}{|\bbeta|}}d_{k}^{n-|\bbeta|}\mu_{\bbeta}^{(h)},
 \qquad
 \frac{\mu_{\bbeta}^{(h-t)}}{\mu_{\bbeta}^{(h)}}
 =\frac{\binom{h-t}{|\bbeta|}}{\binom{h}{|\bbeta|}}.
\]
Substituting these, \eqref{eq:valency-generalized}, and \eqref{eq:second-eigenmatrix} into
\eqref{eq:global-PQ-duality} gives
\[
\begin{aligned}
 P_{t,\balpha}(\bbeta,k)
 &=v^t\binom ht\binom{n-h}{t}
   \frac{\binom{h-t}{|\bbeta|}}{\binom{h}{|\bbeta|}}
   \frac{\mathsf H_k(n-|\bbeta|,h-|\bbeta|;t)}{d_{k}^{n-|\bbeta|}}
   \kappa_{\balpha}^{(h-t)}
   \mathsf F_{d,d}(-\bbeta,-\balpha;-(h-t);\Omega^P)\\
 &=v^t\binom{h-|\bbeta|}{t}\binom{n-h}{t}
   \frac{\mathsf H_k(n-|\bbeta|,h-|\bbeta|;t)}{d_{k}^{n-|\bbeta|}}
   \kappa_{\balpha}^{(h-t)}
   \mathsf F_{d,d}(-\bbeta,-\balpha;-(h-t);\Omega^P)\\
 &=v^t\mathsf E_t(n-|\bbeta|,h-|\bbeta|;k)\kappa_{\balpha}^{(h-t)}
   \mathsf F_{d,d}(-\bbeta,-\balpha;-(h-t);\Omega^P).
\end{aligned}
\]
The second equality uses
$\binom ht\binom{h-t}{|\bbeta|}/\binom{h}{|\bbeta|}=\binom{h-|\bbeta|}{t}$,
and the third uses \eqref{eq:Johnson-PQ-relation}.
This proves \eqref{eq:P-factorization}.
\end{proof}

\begin{remark}\label[remark]{rem:factorization-meaning}
The factor
$\kappa_{\balpha}^{(h-t)}\mathsf F_{d,d}(-\bbeta,-\balpha;-(h-t);\Omega^P)$
in \eqref{eq:P-factorization} is a first eigenvalue of the extension
$\frakX^{[h-t]}$ of the base association scheme, and the factor
$\mu_{\bbeta}^{(h-t)}\mathsf F_{d,d}(-\bbeta,-\balpha;-(h-t);\Omega^Q)$
in \eqref{eq:Q-factorization} is a second eigenvalue of the same association scheme.
When $h=t$, this extension is the association scheme on a singleton.
Thus the two formulas are products of an eigenvalue of the Johnson association scheme
and an eigenvalue of $\frakX^{[h-t]}$, multiplied by the displayed elementary factors.
The factor $v^t$ arises from the number of ways to choose values in $X$ at the
$t$ coordinates of $\domn\eta\setminus\domn\theta$ when the first partial
map $\theta$ is fixed.
\end{remark}

\subsection{The multivariate \texorpdfstring{$Q$}{Q}-polynomial property of \texorpdfstring{$\frakJ_{n,h}(\frakX)$}{J(n,h)(X)}}\label{subsec:main-results-Q}

In this subsection, we prove the multivariate $Q$-polynomial property of $\frakJ_{n,h}(\frakX)$ from the
product expressions for the eigenmatrices and orthogonality.
To this end, we verify the triangularity condition for the Krein parameters
and the nonvanishing of the leading coefficients.
Throughout this subsection, suppose that $1\le h\le n-1$.

For the primitive idempotents, we again use $\Lambda^\ast$.
Define an order $\preceq_Q$ on $\NN^d\times\NN$ by
\begin{equation}\label{eq:Q-block-order}
 (\bbeta_1,k_1)\prec_Q(\bbeta_2,k_2)
 \quad\Longleftrightarrow\quad
 \left\{
 \begin{array}{l}
 \bbeta_1\prec_{\grlex}\bbeta_2,\quad\text{or}\\
 \bbeta_1=\bbeta_2\ \text{and}\ k_1<k_2
 \end{array}
 \right.,
\end{equation}
and denote its restriction to $\Lambda^\ast$ by the same symbol.
For $d\ge1$, this is a monomial order of $d$-block type, with the first $d$
coordinates forming one block, and hence it is of $d$-elimination type.
For $d=0$, take the usual order on $\NN$.

\begin{theorem}\label{thm:Q}
Suppose that $1\le h\le n-1$.
Then $\frakJ_{n,h}(\frakX)$ is $(d+1)$-variate $Q$-polynomial on
$\Lambda^\ast$ with respect to $\preceq_Q$.
The primitive idempotents corresponding to the $E_{\eps_j}$ in
\cref{def:multivariate-Q-polynomial} can be taken to be
$E_{\be_j,0}$ $(1\le j\le d)$ and $E_{\bzero,1}$.
\end{theorem}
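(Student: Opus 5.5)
We verify condition~\ref{item:q-criterion-krein} of \cref{prop:Q-criterion} for the generators $E_{\be_j,0}$ $(1\le j\le d)$ and $E_{\bzero,1}$, using the Krein formula \eqref{eq:Krein-eigenmatrix-formula2} together with the factorization \eqref{eq:Q-factorization}. The approach is to split the sum over $(t,\balpha)\in\Lambda$ into an outer sum over $t$ and an inner sum over $\balpha$. The inner sum should be a Krein parameter of the extension $\frakX^{[h-t]}$, and the outer sum a Hahn-orthogonality sum to which \cref{lem:Johnson-quotient-degree} applies.

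\emph{Step 1 (reduction formula).} Set $L=h-t$. By \eqref{eq:Q-factorization},
\[
Q_{\bbeta,k}(t,\balpha)=c_{\bbeta}\,\mathsf H_k(n-|\bbeta|,h-|\bbeta|;t)\,Q^{[L]}_{\bbeta}(\balpha),
\qquad c_{\bbeta}=\tbinom{n}{|\bbeta|}\big/\tbinom{h}{|\bbeta|},
\]
where we read $Q^{[L]}_{\bbeta}=0$ when $|\bbeta|>L$. The valency \eqref{eq:valency-generalized} factors as $\binom ht\binom{n-h}tv^t\kappa^{(L)}_{\balpha}$. Applying \eqref{eq:Krein-eigenmatrix-formula2} to $\frakX^{[L]}$, the inner sum over $\balpha$ equals
\[
\sum_{\balpha}\kappa^{(L)}_{\balpha}\overline{Q^{[L]}_{\bbeta}Q^{[L]}_{\bgamma}}\,Q^{[L]}_{\boldsymbol\delta}=v^L\mu^{(L)}_{\boldsymbol\delta}\,q^{[L]\boldsymbol\delta}_{\bbeta\bgamma}.
\]
Using $\binom ht\mu^{(h-t)}_{\boldsymbol\delta}=\mu^{(h)}_{\boldsymbol\delta}\binom{h-|\boldsymbol\delta|}{t}\big/\binom{h}{|\boldsymbol\delta|}\cdot\binom{h}{t}\big/\binom{h-|\boldsymbol\delta|}{t}\cdots$, more simply the identity already used in the proof of \cref{thm:factorization-main}, the weight $\binom ht\binom{n-h}t\mu^{(h-t)}_{\boldsymbol\delta}$ becomes a positive constant times $\nu_t^{N,H}$ with $N=n-|\boldsymbol\delta|$ and $H=h-|\boldsymbol\delta|$. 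We therefore expect
\[
q_{(\bbeta,k)(\bgamma,k')}^{(\boldsymbol\delta,\ell)}=C\sum_{t}\nu_t^{N,H}\,q^{[h-t]\boldsymbol\delta}_{\bbeta\bgamma}\,\mathsf H_k(n-|\bbeta|,h-|\bbeta|;t)\,\mathsf H_{k'}(n-|\bgamma|,h-|\bgamma|;t)\,\mathsf H_\ell(N,H;t),
\]
with $C>0$. Here $t$ ranges over $0\le t\le\min\{h-|\boldsymbol\delta|,n-h\}=T$, because $\mu^{(h-t)}_{\boldsymbol\delta}$ vanishes when $|\boldsymbol\delta|>h-t$.

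\emph{Step 2 (generator $E_{\bzero,1}$).} Take $\bgamma=\bzero$ and $k'=1$. Then $Q^{[L]}_{\bzero}=1$, so $q^{[L]\boldsymbol\delta}_{\bbeta\bzero}=\delta_{\bbeta\boldsymbol\delta}$, and a nonzero Krein parameter forces $\boldsymbol\delta=\bbeta$. The factor $\mathsf H_1(n,h;t)$ has degree exactly $1$ by \eqref{eq:Hahn-zero-one}, because $1\le h\le n-1$. Hence $p(t)=\mathsf H_1(n,h;t)\mathsf H_k(N,H;t)$ has degree $k+1$. By \cref{lem:Johnson-quotient-degree}, nonvanishing implies $\ell\le k+1$, so $(\bbeta,\ell)\preceq_Q(\bbeta,k+1)$. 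If $(\bbeta,k+1)\in\Lambda^\ast$, the same lemma gives nonvanishing at $\ell=k+1$.

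\emph{Step 3 (generator $E_{\be_j,0}$).} Take $\bgamma=\be_j$ and $k'=0$, so $\mathsf H_0=1$. By \cref{lem:extension-Q-krein}, every term with $q^{[L]\boldsymbol\delta}_{\bbeta\be_j}\ne0$ has $\boldsymbol\delta\preceq_{\grlex}\bbeta+\be_j$. If $\boldsymbol\delta\prec_{\grlex}\bbeta+\be_j$, then $(\boldsymbol\delta,\ell)\prec_Q(\bbeta+\be_j,k)$ for every $\ell$, by the block definition \eqref{eq:Q-block-order}. If $\boldsymbol\delta=\bbeta+\be_j$, then \eqref{eq:extension-Q-top-krein} gives the constant $\beta_j+1$ for every $L\ge|\boldsymbol\delta|$, and only such $L$ occur in the sum. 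The remaining $t$-sum is therefore $\sum_t\nu_t^{N,H}\mathsf H_k(n-|\bbeta|,h-|\bbeta|;t)\mathsf H_\ell(N,H;t)$, where the first factor is a polynomial of degree $k$ in $t$. \cref{lem:Johnson-quotient-degree} then gives $\ell\le k$, and nonvanishing at $\ell=k$ whenever $(\bbeta+\be_j,k)\in\Lambda^\ast$.

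The main obstacle is Step~1: the weights and ranges must line up exactly. Two things must be checked. First, the mismatch between the Hahn parameters $(n-|\bbeta|,h-|\bbeta|)$ and $(N,H)$ is harmless, since only degrees in $t$ matter. Second, the truncation $|\boldsymbol\delta|\le h-t$ must coincide with the Johnson range $t\le T$, so that \cref{lem:Johnson-quotient-degree} applies verbatim. The complex conjugates coming from nonsymmetric $\frakX$ cause no difficulty, because Step~1 absorbs them into the Krein formula of the extension. The degenerate case $d=0$ is covered by the same argument, since then only Step~2 occurs.
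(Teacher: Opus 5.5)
Your proposal is correct and is essentially the paper's proof. You use the same reduction of $q_{(\bbeta,k)(\bgamma,\ell)}^{(\boldsymbol\delta,r)}$ to a $t$-sum of $\nu_t^{n-|\boldsymbol\delta|,h-|\boldsymbol\delta|}\,q^{[h-t]\boldsymbol\delta}_{\bbeta\bgamma}$ times three Hahn polynomials, followed by \cref{lem:extension-Q-krein} and \cref{lem:Johnson-quotient-degree}; the only difference is that you check the generator condition~\ref{item:q-criterion-krein} of \cref{prop:Q-criterion} instead of the paper's equivalent all-pairs condition~\ref{item:q-criterion-krein-triangularity}. Two small corrections: the garbled weight identity in Step~1 should just read $\binom ht\mu^{(h-t)}_{\boldsymbol\delta}=\mu^{(h)}_{\boldsymbol\delta}\binom{h-|\boldsymbol\delta|}{t}$, and the $t$-range is really $\min\{h-\max\{|\bbeta|,|\bgamma|,|\boldsymbol\delta|\},n-h\}$, which coincides with your $T$ precisely in the cases $\boldsymbol\delta=\bbeta$ and $\boldsymbol\delta=\bbeta+\be_j$ where you invoke the Hahn lemma.
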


\begin{proof}
The set $\Lambda^\ast$ is downward closed and contains $(\bzero,1)$ and
$(\be_j,0)$ $(1\le j\le d)$.
Also, $E_{\bzero,0}=|\Theta|^{-1}J_{\Theta}$.
It therefore suffices to verify the triangularity and nonvanishing conditions
in \cref{prop:Q-criterion} \ref{item:q-criterion-krein-triangularity}.

Take arbitrary $(\bbeta,k),(\bgamma,\ell),(\boldsymbol\delta,r)\in \Lambda^\ast$,
and, for $\boldsymbol\rho\in\{\bbeta,\bgamma,\boldsymbol\delta\}$, put
\[
 C_{\boldsymbol\rho}
 =\frac{\binom n{|\boldsymbol\rho|}}{\binom h{|\boldsymbol\rho|}}>0.
\]
By \eqref{eq:Q-factorization} and \eqref{eq:extension-Q}, for
$(\boldsymbol\rho,a)\in \Lambda^\ast$ we have
\begin{equation}\label{eq:Q-extension-factorization}
 Q_{\boldsymbol\rho,a}(t,\balpha)
 =C_{\boldsymbol\rho}\,
  \mathsf H_a(n-|\boldsymbol\rho|,h-|\boldsymbol\rho|;t)
  Q_{\boldsymbol\rho}^{[h-t]}(\balpha)
 \qquad(|\boldsymbol\rho|\le h-t),
\end{equation}
and the left-hand side is zero if $|\boldsymbol\rho|>h-t$.
Moreover, \eqref{eq:valency-generalized} and \eqref{eq:globalmultiplicity} give
\[
 \kappa_{t,\balpha}
 =v^h\binom ht\binom{n-h}t
   \frac{\kappa_{\balpha}^{(h-t)}}{v^{h-t}},
 \qquad
 M_{\boldsymbol\delta,r}
 =C_{\boldsymbol\delta}d_{r}^{n-|\boldsymbol\delta|}\mu_{\boldsymbol\delta}^{(h)}.
\]

First, we express the Krein parameters in terms of the Krein parameters of
an extension and Hahn polynomials.
Applying \eqref{eq:Krein-eigenmatrix-formula2} to $\frakX^{[h-t]}$, we obtain,
for $|\bbeta|,|\bgamma|,|\boldsymbol\delta|\le h-t$,
\[
 \sum_{\balpha\in D_{h-t}^{\mathrm{ext}}}
 \frac{\kappa_{\balpha}^{(h-t)}}{v^{h-t}}
 \overline{Q_{\bbeta}^{[h-t]}(\balpha)
           Q_{\bgamma}^{[h-t]}(\balpha)}
 Q_{\boldsymbol\delta}^{[h-t]}(\balpha)
 =\mu_{\boldsymbol\delta}^{(h-t)}
   q_{\bbeta\bgamma}^{[h-t]\boldsymbol\delta}.
\]
Furthermore, by \eqref{eq:extension-multiplicity},
\[
 \binom ht\binom{n-h}t\mu_{\boldsymbol\delta}^{(h-t)}
 =\mu_{\boldsymbol\delta}^{(h)}
   \binom{h-|\boldsymbol\delta|}t\binom{n-h}t
 =\mu_{\boldsymbol\delta}^{(h)}\nu_t^{n-|\boldsymbol\delta|,h-|\boldsymbol\delta|}.
\]
Hence, applying \eqref{eq:Krein-eigenmatrix-formula2} to $\frakJ_{n,h}(\frakX)$
and substituting \eqref{eq:Q-extension-factorization}, we obtain
\begin{equation}\label{eq:Q-krein-reduction}
\begin{aligned}
 q_{(\bbeta,k)(\bgamma,\ell)}^{(\boldsymbol\delta,r)}
 &=
 \frac{C_{\bbeta}C_{\bgamma}}{\binom nh\,d_{r}^{n-|\boldsymbol\delta|}}
 \sum_{t=0}^{U}\nu_t^{n-|\boldsymbol\delta|,h-|\boldsymbol\delta|}\,
 q_{\bbeta\bgamma}^{[h-t]\boldsymbol\delta}\\
 &\qquad{}\times
 \mathsf H_k(n-|\bbeta|,h-|\bbeta|;t)
 \mathsf H_\ell(n-|\bgamma|,h-|\bgamma|;t)
 \mathsf H_r(n-|\boldsymbol\delta|,h-|\boldsymbol\delta|;t),
\end{aligned}
\end{equation}
where $U=\min\bigl\{h-\max\{|\bbeta|,|\bgamma|,|\boldsymbol\delta|\},n-h\bigr\}$.
Here we used the fact that the Hahn polynomials are real.

By \eqref{eq:extension-Q-krein-triangularity}, if
$\boldsymbol\delta\succ_{\grlex}\bbeta+\bgamma$, then every term in
\eqref{eq:Q-krein-reduction} is zero.
If $\boldsymbol\delta\prec_{\grlex}\bbeta+\bgamma$, then
$(\boldsymbol\delta,r)\prec_Q(\bbeta+\bgamma,k+\ell)$ regardless of the value of $r$.
Thus the only remaining case for the triangularity condition is
$\boldsymbol\delta=\bbeta+\bgamma$.

Henceforth suppose that $\boldsymbol\delta=\bbeta+\bgamma$. Then
$U=\min\{h-|\bbeta|-|\bgamma|,n-h\}$.
Since $(\boldsymbol\delta,r)\in \Lambda^\ast$, we have
$h-|\bbeta|-|\bgamma|\ge0$ and $0\le r\le U$.
By \eqref{eq:extension-Q-top-krein}, for $0\le t\le U$ we have
\[
 q_{\bbeta\bgamma}^{[h-t]\bbeta+\bgamma}
 =c_{\bbeta,\bgamma}
 :=\prod_{j=1}^d\binom{\beta_j+\gamma_j}{\beta_j}>0,
\]
and this value is independent of $t$.
Thus, defining the polynomial $p(t)$ in $t$ by
\[
 p(t)=
 \mathsf H_k(n-|\bbeta|,h-|\bbeta|;t)
 \mathsf H_\ell(n-|\bgamma|,h-|\bgamma|;t),
\]
we obtain
\begin{equation}\label{eq:Q-krein-top-block}
 q_{(\bbeta,k)(\bgamma,\ell)}^{(\bbeta+\bgamma,r)}
 =
 \frac{C_{\bbeta}C_{\bgamma}c_{\bbeta,\bgamma}}
      {\binom nh\,d_{r}^{n-|\bbeta|-|\bgamma|}}
 \sum_{t=0}^{U}\nu_t^{n-|\bbeta|-|\bgamma|,h-|\bbeta|-|\bgamma|}
 p(t)\mathsf H_r(n-|\bbeta|-|\bgamma|,h-|\bbeta|-|\bgamma|;t).
\end{equation}

The polynomial $p$ has degree $k+\ell$.
Applying \cref{lem:Johnson-quotient-degree} with
$N=n-|\bbeta|-|\bgamma|$, $H=h-|\bbeta|-|\bgamma|$, and $T=U$
to the sum in \eqref{eq:Q-krein-top-block}, we obtain
\[
 q_{(\bbeta,k)(\bgamma,\ell)}^{(\bbeta+\bgamma,r)}\ne0
 \quad\Longrightarrow\quad r\le k+\ell.
\]
This establishes the triangularity condition in all cases.

Finally, suppose that $(\bbeta+\bgamma,k+\ell)\in\Lambda^\ast$.
Then $k+\ell\le U$. Since $\deg p=k+\ell$, the second part of
\cref{lem:Johnson-quotient-degree} shows that the sum in
\eqref{eq:Q-krein-top-block} is nonzero for $r=k+\ell$.
The coefficient preceding the sum is positive, so
\[
 q_{(\bbeta,k)(\bgamma,\ell)}^{(\bbeta+\bgamma,k+\ell)}\ne0.
\]
Thus, by \cref{prop:Q-criterion}, the $(d+1)$-variate
$Q$-polynomial property holds with respect to $\preceq_Q$.
\end{proof}

\begin{remark}
Remark~10 in \cite{BKZZ2025} conjectures the multivariate $Q$-polynomial property
for generalized Johnson association schemes obtained from finite Gelfand pairs.
The above proof includes this case and further applies to the construction
from an arbitrary commutative association scheme.
\end{remark}

\appendix

\section{The Hahn--multivariate Krawtchouk kernel and orthogonality relations}\label{sec:kernel}

In this appendix, we normalize the second eigenmatrix to give a kernel
corresponding to spherical functions.
The formula in the finite Gelfand pair case is given by
Theorem~6.18 in \cite{CST2006},
and we first check its correspondence with our expression.
We then describe the orthogonality relations for an arbitrary commutative base association scheme.

For the association scheme obtained from a finite Gelfand pair $(G,H)$, put $X=G/H$ and
$y_0=H$. Its primitive idempotents $E_j$ are the orthogonal projections onto the
distinct irreducible summands of $\CC^X$. The \emph{spherical function} associated with $E_j$ is
$\phi_j=(v/m_j)E_je_{y_0}$, normalized to have value $1$ at $y_0$
\cite{BannaiIto1984,CST2006}.
For $(y_0,y)\in R_i$, \eqref{eq:base-PQ} and \eqref{eq:PQ-entry-duality} give
\begin{equation}\label{eq:orbital-spherical-Q}
 \phi_j(y)=\frac{v}{m_j}(E_j)_{y,y_0}
 =\frac{Q_j(i')}{m_j}=\frac{P_i(j)}{k_i}.
\end{equation}
Thus the spherical function associated with $E_j^{\mathsf T}$ has value $Q_j(i)/m_j=\overline{\phi_j(y)}$ at $y$. In the symmetric case, the conjugation can be omitted.

Now let $\frakX$ be an arbitrary commutative association scheme.
Dividing each column of the second eigenmatrix of $\frakJ_{n,h}(\frakX)$
by its multiplicity, define
the \emph{normalized kernel} $\Psi_{\bbeta,k}=Q_{\bbeta,k}/M_{\bbeta,k}$.
Equations~\eqref{eq:second-eigenmatrix}, \eqref{eq:extension-Q},
and \eqref{eq:globalmultiplicity} give
\begin{equation}\label{eq:global-spherical}
 \Psi_{\bbeta,k}(t,\balpha)
 =\frac{Q_{\bbeta,k}(t,\balpha)}{M_{\bbeta,k}}
 =\begin{cases}
 \displaystyle
 \frac{\mathsf H_k(n-|\bbeta|,h-|\bbeta|;t)}{d_{k}^{n-|\bbeta|}}\,
 \frac{Q_{\bbeta}^{[h-t]}(\balpha)}{\mu_{\bbeta}^{(h)}},&|\bbeta|\le h-t,\\[2mm]
 0,&|\bbeta|>h-t.
 \end{cases}
\end{equation}
In particular, if $t=0$ and $\balpha=\bzero$, then
$\mathsf H_k(n-|\bbeta|,h-|\bbeta|;0)=d_{k}^{n-|\bbeta|}$ and
$Q_{\bbeta}^{[h]}(\bzero)=\mu_{\bbeta}^{(h)}$ give
$\Psi_{\bbeta,k}(0,\bzero)=1$.

\begin{remark}
Suppose that $\frakX$ comes from a finite Gelfand pair
$(G,H)$, with base point $y_0\in X=G/H$.
Let $\theta_0$ be the partial map with constant value $y_0$
on a fixed $h$-element subset of $[n]$,
and let $(\theta_0,\theta)\in R_{t,\balpha}$.
By \eqref{eq:orbital-spherical-Q}, the spherical function
associated with $E_{\bbeta,k}$ satisfies
\[
 \phi_{\bbeta,k}(\theta)
 =\frac{|\Theta|}{M_{\bbeta,k}}(E_{\bbeta,k})_{\theta,\theta_0}
 =\frac{Q_{\bbeta,k}(t,\balpha')}{M_{\bbeta,k}}
 =\overline{\Psi_{\bbeta,k}(t,\balpha)},
 \qquad \balpha'=(\alpha_{1'},\ldots,\alpha_{d'}).
\]
Thus $\Psi_{\bbeta,k}(t,\balpha)$ is the value at $\theta$
of the spherical function associated with $E_{\bbeta,k}^{\mathsf T}$.
If $\frakX$ is symmetric, then
$\Psi_{\bbeta,k}(t,\balpha)=\phi_{\bbeta,k}(\theta)$.
This gives the spherical function formula of
\cite[Theorem~6.18]{CST2006} in our notation.
\end{remark}

We now study the normalized kernel \eqref{eq:global-spherical} as a
multivariate orthogonal polynomial on a finite lattice.
We explicitly give the weight and squared norms from the product expression
for the second eigenmatrix, and also verify the orthogonality relations
directly from those of the Johnson scheme and the extension.
Let $n\ge1$ and $0\le h\le n$.
We regard $\Lambda$ and $\Lambda^\ast$ as the index sets
for the variables and degrees, respectively.
We use the normalized Hahn polynomials
\[
 \psi(N,H,k;t)
 :=\frac{\mathsf H_k(N,H;t)}{d_k^N}
 \qquad(0\le t,k\le\min\{H,N-H\}).
\]

\begin{definition}\label[definition]{def:bs-kernel}
By \cref{thm:factorization-main}, the normalized kernel in
\eqref{eq:global-spherical} is expressed as
\begin{equation}\label{eq:bs-kernel}
 \Psi_{\bbeta,k}(t,\balpha)
 =\begin{cases}
 \displaystyle
 \frac{\binom{h-t}{|\bbeta|}}{\binom{h}{|\bbeta|}}\,
 \psi(n-|\bbeta|,h-|\bbeta|,k;t)
 \mathsf F_{d,d}(-\bbeta,-\balpha;-(h-t);\Omega^Q),&|\bbeta|\le h-t,\\[2mm]
 0,&|\bbeta|>h-t
 \end{cases}.
\end{equation}
We call \eqref{eq:bs-kernel} the \emph{Hahn--multivariate Krawtchouk kernel}.
We always have $\Psi_{\bbeta,k}(0,\bzero)=1$.
\end{definition}

In this product, the first two parameters of $\psi$ are $n-|\bbeta|,h-|\bbeta|$
and the length parameter of $\mathsf F_{d,d}$ is $h-t$; thus they depend on the
degree or variable of the other factor.
This way of combining orthogonal polynomials with varying parameters is
reminiscent of Tratnik's \cite{Tratnik1991} product construction.
When $d=1$, $\mathsf F_{d,d}$ is also univariate, and this reduces to the
bivariate polynomials of the nonbinary Johnson association scheme \cite{CVZZ2024}.
For general $d$, the extension factor is a multivariate Krawtchouk polynomial
of Mizukawa--Tanaka type that depends on the entire eigenmatrix of the base association scheme.
Thus the connection with Tratnik type here concerns this product structure
with dependent parameters.
On the region where the kernel is defined to be zero, we do not evaluate
$\mathsf F_{d,d}$ outside its domain.
For nonsymmetric association schemes as well, \eqref{eq:bs-kernel} is defined under the
same convention by using $\Omega^Q$.

\begin{proposition}\label[proposition]{prop:bs-polynomial}
The function $\Psi_{\bbeta,k}$ is the restriction to $\Lambda$ of a polynomial
in $(t,\alpha_1,\ldots,\alpha_d)$ of total degree $|\bbeta|+k$.
\end{proposition}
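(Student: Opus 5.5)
The plan is to expand the first line of \eqref{eq:bs-kernel} term by term using the definition \eqref{eq:MT-hypergeom}. The point is that the prefactor $\binom{h-t}{|\bbeta|}$ cancels the $t$-dependent denominators $(-(h-t))_{|C|}$. The result is an explicit polynomial $\widetilde\Psi_{\bbeta,k}(t,\balpha)$ of total degree $|\bbeta|+k$. One then checks that $\widetilde\Psi_{\bbeta,k}$ agrees with $\Psi_{\bbeta,k}$ on all of $\Lambda$, including the region $|\bbeta|>h-t$ where the kernel is defined to be $0$.

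\emph{Step 1 (polynomiality of the extension factor).} Put $b=|\bbeta|$, $L=h-t$, and $c=|C|\le b$; only $C$ with $\rho_i(C)\le\beta_i$ contribute, so $c\le b$. For $c\le L$ we have $(-L)_c=(-1)^cL(L-1)\cdots(L-c+1)$. Hence
\[
 \frac{\binom{L}{b}}{(-L)_c}=\frac{(-1)^c}{b!}\,(L-c)(L-c-1)\cdots(L-b+1),
\]
which is a polynomial in $t$ of degree $b-c$. The numerator factors in \eqref{eq:MT-hypergeom} behave as follows.
\begin{itemize}
\item The factors $(-\beta_i)_{\rho_i(C)}$ are constants.
\item The factor $\prod_j(-\alpha_j)_{\sigma_j(C)}$ is a polynomial in $\balpha$ of degree $\sum_j\sigma_j(C)=c$.
\item The factors $z_{ij}^{c_{ij}}/c_{ij}!$ come from the fixed matrix $\Omega^Q$ and are constants.
\end{itemize}
Define $G_{\bbeta}(t,\balpha)$ as the finite sum over $C\in M_{d,d}(\NN)$ with $\rho_i(C)\le\beta_i$ of these products. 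Each summand has total degree at most $b$, and $G_{\bbeta}$ agrees with $\binom{h-t}{b}\mathsf F_{d,d}(-\bbeta,-\balpha;-(h-t);\Omega^Q)$ whenever $b\le h-t$.

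\emph{Step 2 (vanishing outside the domain).} This is the step I expect to need the most care: showing $G_{\bbeta}(t,\balpha)=0$ for every $(t,\balpha)\in\Lambda$ with $h-t<b$. Fix a term $C$ with $c=|C|$.
\begin{itemize}
\item If $c\le L<b$, the falling factorial $(L-c)\cdots(L-b+1)$ contains the factor $0$.
\item If $L<c$, then $|\balpha|\le L<c=\sum_j\sigma_j(C)$, so $\sigma_j(C)>\alpha_j$ for some $j$, and then $(-\alpha_j)_{\sigma_j(C)}=0$.
\end{itemize}
Thus every term vanishes. Next, the normalized Hahn polynomial $\psi(n-b,h-b,k;t)$ is a polynomial of degree exactly $k$ in $t$, because the denominators in \eqref{eq:Hahn-eigenvalue} are nonzero constants. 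On the region $b\le h-t$ it is evaluated inside its domain, since $(t,\balpha)\in\Lambda$ gives $t\le n-h$. Hence
\[
 \widetilde\Psi_{\bbeta,k}=\binom{h}{b}^{-1}\psi(n-b,h-b,k;t)\,G_{\bbeta}(t,\balpha)
\]
restricts to $\Psi_{\bbeta,k}$ on $\Lambda$, and its total degree is at most $b+k$.

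\emph{Step 3 (exact degree).} It remains to show that the coefficient of $t^{b+k}$ in $\widetilde\Psi_{\bbeta,k}$ is nonzero. In $G_{\bbeta}$, a term with $c>0$ contributes to the degree-$b$ homogeneous part only monomials involving some $\alpha_j$. So the coefficient of $t^{b}$ comes solely from $C=0$. That term is $\binom{h-t}{b}$, whose $t^b$-coefficient is $(-1)^b/b!\ne0$. Multiplying by the nonzero leading coefficient of $\psi$ in $t$ shows that the coefficient of $t^{b+k}$ is nonzero. Hence the total degree is exactly $|\bbeta|+k$.
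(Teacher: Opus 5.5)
Your proof is correct, but it takes a different route from the paper's. The paper never uses the series \eqref{eq:MT-hypergeom} here. Instead it writes the extension factor as the coefficient extraction \eqref{eq:extension-Q-coefficient}, $Q_{\bbeta}^{[h-t]}(\balpha)=[\mathbf z^{\bbeta}]\prod_{i=0}^d(1+\sum_j Q_j(i)z_j)^{\alpha_i}$ with $\alpha_0=h-t-|\balpha|$, and argues in three steps. Polynomiality comes from expanding each power with the binomial polynomials $\binom{\alpha_i}{r}$. Vanishing on the region $h-t<|\bbeta|$ is a one-line $\mathbf z$-degree count: for nonnegative integer $\alpha_i$ the product has degree $\sum_i\alpha_i=h-t<|\bbeta|$. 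The exact degree comes from computing the whole top homogeneous component, $\frac{1}{\beta_1!\cdots\beta_d!}\prod_j(-m_jt+\sum_{i=1}^d(Q_j(i)-m_j)\alpha_i)^{\beta_j}$, which is a product of nonzero linear forms. You stay with the definition \eqref{eq:bs-kernel}. The identity $\binom{L}{b}/(-L)_c=\frac{(-1)^c}{b!}(L-c)\cdots(L-b+1)$ removes the apparent poles in $t$. Your two-case termwise check handles the zero region, including the terms with $|C|>h-t$; these are absent from $\mathsf F_{d,d}$ but present in your fixed polynomial $G_{\bbeta}$, and your case analysis correctly kills them. For the degree you only need the $t^{b}$ coefficient of the $C=0$ term. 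Your argument is self-contained from the hypergeometric definition and does not depend on the identification \eqref{eq:extension-Q-coefficient} imported from Mizukawa's coefficient formulas. The paper's argument is shorter and gives more structure: the full leading form, not just one nonzero top coefficient. One small point: nonvanishing denominators in \eqref{eq:Hahn-eigenvalue} only give $\deg\psi\le k$. Exactness also needs $(-k)_k(k-N-1)_k\ne0$, which holds because $2k\le N$. Alternatively, simply cite the paper's statement that $\mathsf H_k(N,H;x)$ has degree $k$.
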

\begin{proof}
By \eqref{eq:extension-Q-coefficient} and \eqref{eq:extension-Q}, when
$|\bbeta|\le h-t$ we have
\[
 Q_{\bbeta}^{[h-t]}(\balpha)
 =[z_1^{\beta_1}\cdots z_d^{\beta_d}]
 \prod_{i=0}^d\left(1+\sum_{j=1}^dQ_j(i)z_j\right)^{\alpha_i},
 \qquad \alpha_0=h-t-|\balpha|.
\]
Expanding the right-hand side using binomial coefficient polynomials gives
a polynomial in $t,\balpha$ of total degree at most $|\bbeta|$.
When $\alpha_0,\ldots,\alpha_d$ are nonnegative integers with sum
$h-t<|\bbeta|$, the coefficient is zero, so this polynomial also covers the
zero region of \eqref{eq:bs-kernel}.
We can also interpret the expression as
\begin{equation}\label{eq:bs-kernel-coefficient}
 \Psi_{\bbeta,k}(t,\balpha)
 =\psi(n-|\bbeta|,h-|\bbeta|,k;t)\,
 \frac{Q_{\bbeta}^{[h-t]}(\balpha)}{\mu_{\bbeta}^{(h)}}.
\end{equation}
Here $Q_{\bbeta}^{[h-t]}$ for $|\bbeta|>h-t$ denotes the zero value given
by the coefficient above.
Put $b=|\bbeta|$ and
$L_i(\mathbf z)=\sum_{j=1}^d Q_j(i)z_j$.
Using the fact that $\binom{\alpha_i}{r}$ has leading term
$\alpha_i^r/r!$ for $r\ge1$,
we find that the homogeneous component of degree $b$ of
$Q_{\bbeta}^{[h-t]}(\balpha)$, viewed as a polynomial in
$t,\alpha_1,\ldots,\alpha_d$, equals that of
\[
 [\mathbf z^{\bbeta}]
 \frac{1}{b!}
 \left(\sum_{i=0}^d\alpha_iL_i(\mathbf z)\right)^b.
\]
Since $Q_j(0)=m_j$ and $\alpha_0=h-t-|\balpha|$, we have
\[
 \sum_{i=0}^d\alpha_iL_i(\mathbf z)
 =
 \sum_{j=1}^d
 \left(
 hm_j-m_jt+\sum_{i=1}^d(Q_j(i)-m_j)\alpha_i
 \right)z_j.
\]
The constant terms $hm_j$ do not contribute to degree $b$.
Thus the multinomial theorem gives the homogeneous component
of degree $b$ as
\[
 \frac{1}{\beta_1!\cdots\beta_d!}
 \prod_{j=1}^d
 \left(
 -m_jt+\sum_{i=1}^d(Q_j(i)-m_j)\alpha_i
 \right)^{\beta_j}.
\]
Since $m_j\ne0$, each factor is a nonzero linear form.
Since $\psi(n-|\bbeta|,h-|\bbeta|,k;t)$ has degree $k$ in $t$,
the claim follows.
\end{proof}

Put $p_i=k_i/v$ $(0\le i\le d)$.
Since $k_0=1$ and $\sum_ik_i=v$, we have $p_i>0$ and $\sum_ip_i=1$.
We use the following probability weight:
\begin{equation}\label{eq:bs-weight}
 w(t,\balpha)
 =\frac{\binom ht\binom{n-h}{t}}{\binom nh}
 \binom{h-t}{\alpha_0,\alpha_1,\ldots,\alpha_d}
 \prod_{i=0}^dp_i^{\alpha_i},\qquad
 \alpha_0=h-t-|\balpha|.
\end{equation}
This is the product of the distribution of the Johnson distance between two
$h$-element subsets and the multinomial distribution of length $h-t$ with
the distance $t$ fixed.
The multinomial theorem and Vandermonde's identity give $\sum_{\Lambda}w=1$,
and all weights are positive.
Also,
\begin{equation}\label{eq:bs-norm-multiplicity}
 M_{\bbeta,k}
 =\binom{n}{|\bbeta|}\frac{|\bbeta|!}{\beta_1!\cdots\beta_d!}
  d_{k}^{n-|\bbeta|}\prod_{j=1}^dm_j^{\beta_j}>0
\end{equation}
is the multiplicity in \eqref{eq:globalmultiplicity}.

\begin{proposition}\label[proposition]{prop:bs-orthogonality}
For the kernel in \eqref{eq:bs-kernel}, we have
\begin{align}
 \sum_{(t,\balpha)\in \Lambda}w(t,\balpha)
 \Psi_{\bbeta,k}(t,\balpha)
 \overline{\Psi_{\bgamma,\ell}(t,\balpha)}
 &=\frac{\delta_{\bbeta\bgamma}\delta_{k\ell}}{M_{\bbeta,k}},
 \label{eq:bs-orthogonality}\\
 \sum_{(\bbeta,k)\in \Lambda^\ast}M_{\bbeta,k}
 \Psi_{\bbeta,k}(t,\balpha)
 \overline{\Psi_{\bbeta,k}(u,\widetilde{\balpha})}
 &=\frac{\delta_{tu}\delta_{\balpha,\widetilde{\balpha}}}{w(t,\balpha)}.
 \label{eq:bs-dual-orthogonality}
\end{align}
In particular, $\{\Psi_{\bbeta,k}:(\bbeta,k)\in \Lambda^\ast\}$ is an
orthogonal basis of the space of all functions on $\Lambda$,
and the squared norm of $\Psi_{\bbeta,k}$ with respect to $w$ is $1/M_{\bbeta,k}$.
\end{proposition}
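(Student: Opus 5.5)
The plan is to derive both relations from the general orthogonality relations of the commutative association scheme $\frakJ_{n,h}(\frakX)$, after checking that $w$ is the normalized valency.

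\textbf{Step 1: $w$ is the normalized valency.} By \eqref{eq:valency-generalized},
\[
 \kappa_{t,\balpha}/|\Theta|
 =\binom ht\binom{n-h}t v^t\binom{h-t}{\alpha_0,\ldots,\alpha_d}\prod_{i}k_i^{\alpha_i}\Big/\Bigl(\tbinom nh v^h\Bigr).
\]
Since $\sum_i\alpha_i=h-t$ and $k_0=1$, we have
\[
 v^t\prod_i k_i^{\alpha_i}/v^h=\prod_{i=0}^d p_i^{\alpha_i}.
\]
Hence $w(t,\balpha)=\kappa_{t,\balpha}/|\Theta|$.

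\textbf{Step 2: the orthogonality relation \eqref{eq:bs-orthogonality}.} Apply \eqref{eq:Q-orthogonality} to $\frakJ_{n,h}(\frakX)$:
\[
 \sum_{(t,\balpha)\in\Lambda}\kappa_{t,\balpha}\overline{Q_{\bgamma,\ell}(t,\balpha)}Q_{\bbeta,k}(t,\balpha)=|\Theta|M_{\bbeta,k}\delta.
\]
Divide by $|\Theta|M_{\bbeta,k}M_{\bgamma,\ell}$ and substitute $\Psi=Q/M$. This gives exactly \eqref{eq:bs-orthogonality}, with $\delta/M_{\bbeta,k}$ on the right. Only \cref{thm:factorization-main} is needed, to identify $\Psi$ with the columns of the second eigenmatrix and $M_{\bbeta,k}$ with the multiplicities. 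It holds for all $0\le h\le n$, since \cref{thm:scheme}\ref{item:scheme-association} covers every $h$.

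\textbf{Step 3: the dual orthogonality \eqref{eq:bs-dual-orthogonality}.} Use the column orthogonality of $P$, namely $\sum_j m_j\overline{P_i(j)}P_{i'}(j)\cdots$, or derive it from Step 2 by a square-matrix argument. Form the $|\Lambda|\times|\Lambda^\ast|$ matrix
\[
 \mathcal U=\Bigl(\sqrt{w(t,\balpha)M_{\bbeta,k}}\,\Psi_{\bbeta,k}(t,\balpha)\Bigr).
\]
Step 2 says $\mathcal U^*\mathcal U=I$, after taking the conjugate if necessary. Since $|\Lambda|=|\Lambda^\ast|$ via the bijection $(\bbeta,k)\mapsto(k,\bbeta)$, the matrix $\mathcal U$ is square and therefore unitary. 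Consequently $\mathcal U\mathcal U^*=I$, which reads as \eqref{eq:bs-dual-orthogonality} after dividing by $\sqrt{w(t,\balpha)w(u,\widetilde\balpha)}$; all weights are positive.

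\textbf{Step 4: the basis statement.} Linear independence follows from \eqref{eq:bs-orthogonality}. Together with the equality of cardinalities, this gives an orthogonal basis of $\CC^\Lambda$ with squared norms $1/M_{\bbeta,k}$.

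The only delicate points are the placement of complex conjugates in the nonsymmetric case and the unitarity bookkeeping. Neither is a real obstacle, since \eqref{eq:Q-orthogonality} already fixes the conjugation convention.
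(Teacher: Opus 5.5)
Your proof is correct, but your Step 2 takes a different route from the paper. You identify $w=\kappa_{t,\balpha}/|\Theta|$ and $\Psi_{\bbeta,k}=Q_{\bbeta,k}/M_{\bbeta,k}$. Then \eqref{eq:bs-orthogonality} is just \eqref{eq:Q-orthogonality} for $\frakJ_{n,h}(\frakX)$ itself. This relies on \cref{thm:scheme}\ref{item:scheme-association}, \cref{lem:nonempty-valency}, and the identification of the primitive idempotents in \cref{thm:factorization-main}. It also avoids any bookkeeping of the zero region $|\bbeta|>h-t$.

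The paper instead works from the explicit product form \eqref{eq:bs-kernel-coefficient}. For each fixed $t$, it first sums over $\balpha$ using \eqref{eq:Q-orthogonality} for the extension $\frakX^{[h-t]}$, which kills the terms with $\bbeta\ne\bgamma$. Using $\binom ht\binom{n-h}t\binom{h-t}{|\bbeta|}/\binom h{|\bbeta|}=\nu_t^{n-|\bbeta|,h-|\bbeta|}$, it then reduces the remaining $t$-sum to the Hahn orthogonality \eqref{eq:E-orthogonality-Krein} in $J(n-|\bbeta|,h-|\bbeta|)$.

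Your argument is shorter and conceptually cleaner. The paper's is the direct verification from the factorized kernel announced before \cref{def:bs-kernel}. It needs only the orthogonality of the extension and Johnson factors, not the global spectral identification. It also makes the Tratnik-type nesting visible: extension orthogonality at length $h-t$ feeds the weight $\nu_t$ of a Johnson scheme whose parameters depend on $|\bbeta|$.

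Your Step 3 is the same unitary-matrix argument as the paper's. The alternative you mention in passing is garbled as written: the relevant identity is $\sum_j m_j\overline{P_i(j)}P_\ell(j)=vk_i\delta_{i\ell}$. Combined with \eqref{eq:PQ-entry-duality}, it would also give \eqref{eq:bs-dual-orthogonality}, but your square-matrix argument already suffices.
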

\begin{proof}
Applying \eqref{eq:Q-orthogonality} to $\frakX^{[h-t]}$ gives
\[
 v^{-(h-t)}\sum_{\balpha\in D_{h-t}^{\mathrm{ext}}}
 \kappa_{\balpha}^{(h-t)}
 Q_{\bbeta}^{[h-t]}(\balpha)
 \overline{Q_{\bgamma}^{[h-t]}(\balpha)}
 =\mu_{\bbeta}^{(h-t)}\delta_{\bbeta\bgamma}.
\]
Thus, substituting \eqref{eq:bs-kernel-coefficient} and summing over $\balpha$
first gives zero when $\bbeta\ne\bgamma$.
When $\bbeta=\bgamma$, the identities
\[
 \frac{\mu_{\bbeta}^{(h-t)}}{\mu_{\bbeta}^{(h)}}
 =\frac{\binom{h-t}{|\bbeta|}}{\binom{h}{|\bbeta|}},\qquad
 \binom ht\binom{n-h}{t}
 \frac{\binom{h-t}{|\bbeta|}}{\binom{h}{|\bbeta|}}
 =\binom{h-|\bbeta|}{t}\binom{n-h}{t}
\]
show that the remaining sum is
\[
 \frac{1}{\binom nh\mu_{\bbeta}^{(h)}}
 \sum_{t=0}^{\min\{h-|\bbeta|,n-h\}}
 \nu_t^{n-|\bbeta|,h-|\bbeta|}\psi(n-|\bbeta|,h-|\bbeta|,k;t)\psi(n-|\bbeta|,h-|\bbeta|,\ell;t)
 =\frac{\binom{n-|\bbeta|}{h-|\bbeta|}}{\binom nh\mu_{\bbeta}^{(h)}d_{k}^{n-|\bbeta|}}
   \delta_{k\ell}.
\]
Here we used \eqref{eq:E-orthogonality-Krein}.
The final coefficient is $1/M_{\bbeta,k}$, giving \eqref{eq:bs-orthogonality}.
The matrix
\[
 U_{(t,\balpha),(\bbeta,k)}
 =\sqrt{w(t,\balpha)M_{\bbeta,k}}\,
  \Psi_{\bbeta,k}(t,\balpha)
\]
is square and has orthonormal columns.
Hence it is unitary, and the orthogonality of its rows gives
\eqref{eq:bs-dual-orthogonality}.
\end{proof}

\section{Imprimitivity of \texorpdfstring{$\frakJ_{n,h}$}{J(n,h)}}\label{sec:imprimitive}

In this appendix, we describe the partition into blocks given by partial maps
with the same domain and its quotient.
We also relate the polynomial structures induced by the block-type orders
used in the main text to the inheritance theorems of
Higashitani--Kurihara \cite{HigashitaniKurihara2026}.

Let $\mathfrak{X}=(X,\mathcal R,\mathcal I)$ be a commutative association scheme.
For $i,j\in\mathcal I$, put $ij=\{\ell\in\mathcal I:p_{ij}^{\ell}>0\}$,
and, for $C_1,C_2\subseteq\mathcal I$, put
\[
 C_1C_2=\bigcup_{i\in C_1,\ j\in C_2}ij,\qquad
 C_1^{\mathsf T}=\{i':i\in C_1\}.
\]

A nonempty subset $C\subseteq\mathcal I$ is called a \emph{closed subset} if
$C^{\mathsf T}C\subseteq C$.
An association scheme is called \emph{imprimitive} if there exists a closed
subset with $\{0\}\subsetneq C\subsetneq\mathcal I$.

\begin{lemma}[cf.~Lemma~2.3 in \cite{HigashitaniKurihara2026}]\label[lemma]{lem:closed-equivalence}
For a nonempty $C\subseteq\mathcal I$, the following three conditions are equivalent.
\begin{enumerate}[label=\textup{(\roman*)},ref=\textup{(\roman*)}]
\item\label{item:closed-subset}
$C$ is a closed subset.
\item\label{item:closed-point-equivalence}
$R_C=\mathcal R^{-1}(C)=\bigcup_{i\in C}R_i$
is an equivalence relation on $X$.
\item\label{item:closed-label-equivalence}
\[
 i\equiv_Cj
 \quad\Longleftrightarrow\quad
 i'j\cap C\ne\varnothing
\]
is an equivalence relation on $\mathcal I$.
\end{enumerate}
\end{lemma}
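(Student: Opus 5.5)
We prove the cycle of implications \ref{item:closed-subset}$\Rightarrow$\ref{item:closed-point-equivalence}$\Rightarrow$\ref{item:closed-label-equivalence}$\Rightarrow$\ref{item:closed-subset}. The translation between labels and points rests on one fact: for $x,y\in X$ with $(x,y)\in R_\ell$, there exists $z$ with $(x,z)\in R_i$ and $(z,y)\in R_j$ if and only if $p_{ij}^{\ell}>0$, i.e.\ $\ell\in ij$. Thus $C_1C_2$ is exactly the set of labels of pairs $(x,y)$ that can be joined through an intermediate point, with the first step labelled in $C_1$ and the second in $C_2$. Since $R_{i}^{\mathsf T}=R_{i'}$, the set $C^{\mathsf T}$ labels the transposed relation $R_C^{\mathsf T}$.

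\ref{item:closed-subset}$\Rightarrow$\ref{item:closed-point-equivalence}:
\begin{itemize}
\item Reflexivity. Pick $i\in C$. Since $p_{i'i}^0=k_i>0$, we have $0\in i'i\subseteq C^{\mathsf T}C\subseteq C$, so $\Delta_X\subseteq R_C$.
\item Symmetry. For $i\in C$, we have $i'\in i'0\subseteq C^{\mathsf T}C\subseteq C$, because $p_{i'0}^{i'}=1$ and $0\in C$. Hence $C^{\mathsf T}=C$.
\item Transitivity. Now $CC=C^{\mathsf T}C\subseteq C$, so by the translation fact, $R_C$ is closed under composition.
\end{itemize}

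\ref{item:closed-point-equivalence}$\Rightarrow$\ref{item:closed-label-equivalence}: Interpret $i\equiv_C j$ pointwise. We have $i'j\cap C\neq\varnothing$ exactly when some (equivalently, by the intersection-number invariance, every suitable) configuration exists: points $x,y,z$ with $(z,x)\in R_i$, $(z,y)\in R_j$ and $(x,y)\in R_C$. Fix $z$. Then $i\equiv_C j$ means that some $R_i$-neighbour of $z$ and some $R_j$-neighbour of $z$ lie in the same $R_C$-class.

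The key observation is that this holds for some such pair if and only if it holds for all such pairs: whether the $R_C$-class of a point $x$ depends only on the label of $(z,x)$. I expect this to be the main obstacle. The intersection numbers give that the number of $y\in R_j(z)$ with $(x,y)\in R_\ell$ is $p_{i'j}^{\ell}$ counted appropriately; since this is independent of $x\in R_i(z)$, "some" upgrades to "every". More precisely, $\sum_{\ell\in C}p_{i j}^{\ell}$-type counts depend only on labels, not on points.

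With this in hand, $i\equiv_C j$ becomes: "the classes of $R_i(z)$ and $R_j(z)$ meet the same $R_C$-class". This relation is reflexive, symmetric and transitive, inherited from $R_C$ being an equivalence relation. For transitivity, use the uniform statement: if $i\equiv j$ and $j\equiv k$, pick $y\in R_j(z)$. Then $y$ is $R_C$-related to every suitable point of $R_i(z)$ and of $R_k(z)$, so there exist $x\in R_i(z)$ and $w\in R_k(z)$ with $(x,y),(y,w)\in R_C$, hence $(x,w)\in R_C$.

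\ref{item:closed-label-equivalence}$\Rightarrow$\ref{item:closed-subset}: Since $C\neq\varnothing$, pick $c\in C$. First show $0\in C$ by using reflexivity at $c$ together with symmetry of $\equiv_C$. Then observe that $\ell\in C$ if and only if $0\equiv_C \ell$, since $0'\ell=\{\ell\}$. Now take $i,j\in C$ and $\ell\in i'j$. From $0\equiv_C i$ and $0\equiv_C j$, symmetry and transitivity give $i\equiv_C j$ and related relations, and we conclude $0\equiv_C\ell$, i.e.\ $\ell\in C$.

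Here the concrete deduction I would try is the associativity of label products, $(0'i)(i'j)$-style containments, via $p$-number identities $\sum p p = \sum p p$. These identities give $0'\ell\cap C\neq\varnothing$ from $\ell\in i'j$ with $i,j\in C$. This bookkeeping, rather than any deep idea, is where care is needed. Alternatively, and more simply, one can follow the argument of Lemma~2.3 in \cite{HigashitaniKurihara2026} verbatim.
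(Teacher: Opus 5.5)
The direction (iii)$\Rightarrow$(i) has a genuine gap. The relations $0\equiv_C i$, $0\equiv_C j$, $i\equiv_C j$ involve only the labels $0,i,j$, so no combination of symmetry and transitivity among them can say anything about $\ell$. What you hope the ``$\sum pp=\sum pp$'' bookkeeping delivers, $0'\ell\cap C\ne\varnothing$, is just the goal $\ell\in C$ restated, and associativity of label products does not produce it.

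The missing ingredient is a relation tying $\ell$ to the class $C$. This is the rotation identity $k_\ell\,p_{i'j}^{\ell}=k_j\,p_{i\ell}^{j}$. You get it by counting triples $(x,z,y)$ with $(x,z)\in R_{i'}$, $(z,y)\in R_j$, $(x,y)\in R_\ell$ twice: once starting from the pair $(x,y)$, once from the pair $(z,y)$. It gives $\ell\in i'j\iff j\in i\ell$. Now suppose $i,j\in C$ and $\ell\in i'j$:
\begin{itemize}
\item $j\in(i')'\ell\cap C$, i.e.\ $i'\equiv_C\ell$;
\item symmetry applied to $0\equiv_C i$ gives $i'0\cap C=\{i'\}\cap C\ne\varnothing$, i.e.\ $0\equiv_C i'$;
\item transitivity then yields $0\equiv_C\ell$, that is, $\ell\in C$.
\end{itemize}
Also, $0\in C$ comes from reflexivity at $0$ itself, since $0'0\cap C=\{0\}\cap C$. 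Reflexivity at $c$ only gives $c'c\cap C\ne\varnothing$, which does not force $0\in C$. The paper states this lemma without proof, citing Lemma~2.3 of \cite{HigashitaniKurihara2026}, so deferring to that argument does not close the gap.

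Your (i)$\Rightarrow$(ii) is correct. In (ii)$\Rightarrow$(iii), the key observation as literally stated is false. ``Some pair if and only if all pairs'' already fails for $C=\{0\}$ and $i=j$ with $k_i\ge2$: there $x=y$ witnesses $i\equiv_C i$, but distinct points of $R_i(z)$ are not $R_C$-related. What the intersection numbers actually give, and all your transitivity step needs, is a weaker statement. If $i\equiv_C j$, then every $x\in R_i(z)$ has at least one $R_C$-partner in $R_j(z)$, because their number is $\sum_{\ell\in C}p_{\ell j'}^{i'}$ (not $p_{i'j}^{\ell}$), which is independent of $x$. Likewise every $y\in R_j(z)$ has a partner in $R_i(z)$; their number is $\sum_{\ell\in C}p_{i\ell}^{j}$, which is positive by the same rotation identity. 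With ``every suitable point'' replaced by ``some point'', your transitivity argument then goes through.
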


Let $C$ be a closed subset.
Denote the equivalence class of $x\in X$ under $R_C$ by $xC$, and the set
of all equivalence classes by $X/C$.
All equivalence classes have the same size
\begin{equation}\label{eq:block-size}
 p=|xC|=\sum_{i\in C}k_i.
\end{equation}
For an equivalence class $x_0C$, define
\[
 \mathcal R_{x_0C}\colon x_0C\times x_0C\longrightarrow C,
 \qquad (x,y)\longmapsto\mathcal R(x,y).
\]
This is a well-defined surjection.
The triple $\left(x_0C,\mathcal R_{x_0C},C\right)$ is called the
\emph{block association scheme} of $\frakX$ on $x_0C$.

Denote the equivalence class of $i\in\mathcal I$ under
\ref{item:closed-label-equivalence} of \cref{lem:closed-equivalence} by $iC$,
and put $\mathcal I/C=\{iC:i\in\mathcal I\}$.
Define
\[
 \mathcal R/C\colon X/C\times X/C\longrightarrow\mathcal I/C,\qquad
 (xC,yC)\longmapsto\mathcal R(x,y)C.
\]
With these relations, $\frakX/C=(X/C,\mathcal R/C,\mathcal I/C)$ is an
association scheme, called the \emph{quotient association scheme} of $\frakX$ by $C$.

Let $j\in\mathcal J$.
By completeness of the primitive idempotents, there exists a unique
$j^{\mathsf T}\in\mathcal J$ such that $E_{j^{\mathsf T}}=E_j^{\mathsf T}$.
Moreover, $(j^{\mathsf T})^{\mathsf T}=j$ and $m_{j^{\mathsf T}}=m_j$.
For $j,\ell\in\mathcal J$, use the Krein parameters to put
$j\circ\ell=\{a\in\mathcal J:q_{j\ell}^{a}>0\}$.
A nonempty $C^*\subseteq\mathcal J$ is called a \emph{closed subset with
respect to the Hadamard product} if $(C^*)^{\mathsf T}\circ C^*\subseteq C^*$.
This is equivalent to $\Span_{\CC}\{E_j:j\in C^*\}$ being a subalgebra under
the Hadamard product \cite{HigashitaniKurihara2026}.

Let $C\subseteq\mathcal I$ be a closed subset, and put $p=|xC|$. By \cite{HigashitaniKurihara2026}, there is a unique subset $C^\ast\subseteq\mathcal J$ such that
\begin{equation}\label{eq:dual-closed-projection}
 M_C=\frac1p\sum_{i\in C}A_i
     =\sum_{j\in C^*}E_j.
\end{equation}
This $C^*$ is called the \emph{dual closed subset} corresponding to $C$.
Moreover, $C^*$ is a closed subset with respect to the Hadamard product.

The set $C^*$ also defines an equivalence relation on $\mathcal J$.
Specifically, for $j,\ell\in\mathcal J$, put
\[
 j\equiv_{C^*}\ell
 \quad\Longleftrightarrow\quad
 (j^{\mathsf T}\circ\ell)\cap C^*\ne\varnothing
 \quad\Longleftrightarrow\quad
 q_{ja}^{\ell}>0\text{ for some }a\in C^*.
\]
Denote the equivalence class of $j$ by $jC^*$ and the set of all equivalence
classes by $\mathcal J/C^*=\{jC^*:j\in\mathcal J\}$.

We apply the above theory to $\frakJ_{n,h}(\mathfrak{X})$.
For $d=0$, that is, when $\frakJ_{n,h}(\mathfrak{X})$ is a Johnson association scheme,
the conditions for imprimitivity are already known, so we consider $d\ge1$ below.

\begin{proposition}\label[proposition]{prop:imprimitive-blocks}
Suppose that $d\ge1$ and $1\le h\le n-1$.
The relation on $\Theta$ given by
\[
 \theta\sim\eta
 \quad\Longleftrightarrow\quad
 \domn\theta=\domn\eta
\]
is a nontrivial equivalence relation.
Moreover,
\[
 C=\{(0,\balpha)\in \Lambda:|\balpha|\le h\}
\]
is a nontrivial closed subset.
Each block has size $v^h$, and there are $\binom nh$ blocks.
The quotient association scheme on the set of blocks is naturally isomorphic to the
Johnson association scheme $J(n,h)$, and the block association scheme on any one block is naturally
isomorphic to the extension of $\frakX$ of length $h$.
\end{proposition}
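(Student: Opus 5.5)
The plan is to verify each claim directly from the definitions, using \cref{lem:closed-equivalence} to pass between the equivalence relation on $\Theta$ and the closed subset of $\Lambda$.

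\textbf{Equivalence relation and closed subset.} By \eqref{eq:t-definition}, $t(\theta,\eta)=0$ holds if and only if $|\domn\theta\cap\domn\eta|=h$, that is, $\domn\theta=\domn\eta$. Hence $\theta\sim\eta$ is exactly $(\theta,\eta)\in\bigcup_{(0,\balpha)\in C}R_{0,\balpha}=R_C$. Since equality of domains is obviously reflexive, symmetric and transitive, \cref{lem:closed-equivalence} \ref{item:closed-point-equivalence}$\Rightarrow$\ref{item:closed-subset} shows that $C$ is closed. For nontriviality:
\begin{itemize}[label=--]
\item $C\ne\{(0,\bzero)\}$ because $h\ge1$ and $d\ge1$ give $(0,\be_1)\in\Lambda$;
\item $C\ne\Lambda$ because $1\le h\le n-1$ gives $(1,\bzero)\in\Lambda$.
\end{itemize}
Each block is $\Theta_A=\{\theta:\domn\theta=A\}\cong X^A$, so it has $v^h$ elements, and the blocks are indexed by $A\in\binom{[n]}{h}$, giving $\binom nh$ of them.

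\textbf{Block scheme.} Fix $A$ and identify $\Theta_A$ with $X^h$ by ordering $A$. For $\theta,\eta\in\Theta_A$ we have $t=0$, and $\alpha_i(\theta,\eta)$ counts the coordinates $a\in A$ with $(\theta(a),\eta(a))\in R_i$. This is exactly $\tau_i$ in \cref{def:extension}. The labels occurring are $\{(0,\balpha):|\balpha|\le h\}=C$, which corresponds bijectively to $D_h^{\mathrm{ext}}$. So the identity map on $X^h$, together with $(0,\balpha)\mapsto\balpha$, is an isomorphism of the block scheme onto $\frakX^{[h]}$.

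\textbf{Quotient scheme.} The map $\Theta/C\to\binom{[n]}{h}$, $\theta C\mapsto\domn\theta$, is a bijection. The main point, and the only mildly nontrivial step, is to identify the classes $(t,\balpha)C$ in $\Lambda/C$. I would show that $(t,\balpha)\equiv_C(u,\widetilde\balpha)$ holds if and only if $t=u$. This is done through the point relation, which avoids computing intersection numbers.

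\begin{itemize}[label=--]
\item Suppose $(t,\balpha)\equiv_C(u,\widetilde\balpha)$. Then there are $\theta,\eta,\xi$ with $(\eta,\theta)\in R_{(t,\balpha)'}$, $(\theta,\xi)\in R_{u,\widetilde\balpha}$ and $(\eta,\xi)\in R_C$. The last condition means $\domn\eta=\domn\xi$, and therefore $t=t(\theta,\eta)=t(\theta,\xi)=u$.
\item Conversely, suppose $t=u$. Pick $(\theta,\eta)\in R_{t,\balpha}$, and modify $\eta$ to some $\xi$ with the same domain so that $(\theta,\xi)\in R_{t,\widetilde\balpha}$. This is possible since the values on the common domain can be chosen freely. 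Then $(\eta,\xi)\in R_C$ gives $p^{(0,\cdot)}_{(t,\balpha)'(t,\widetilde\balpha)}>0$, which yields $(t,\balpha)\equiv_C(t,\widetilde\balpha)$.
\end{itemize}

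Hence $\Lambda/C\cong\{0,\ldots,\min\{h,n-h\}\}$ via $(t,\balpha)C\mapsto t$. Moreover, $(\mathcal S/C)(\theta C,\eta C)=t(\theta,\eta)=h-|\domn\theta\cap\domn\eta|=\delta(\domn\theta,\domn\eta)$ by \eqref{eq:Johnson-distance}. So the bijection above is an isomorphism of the quotient scheme onto $J(n,h)$.
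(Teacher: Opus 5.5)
Your proposal is correct and follows the paper's proof. You use the same steps: equality of domains is exactly $t=0$, closedness follows from \cref{lem:closed-equivalence}, the block is identified with $\frakX^{[h]}$ via $(0,\balpha)\mapsto\balpha$, and the quotient is identified with $J(n,h)$ via $t(\theta,\eta)=h-|\domn\theta\cap\domn\eta|$. Your explicit check that the $\equiv_C$-classes are exactly the fibres of $t$, and your nontriviality argument via $(0,\be_1),(1,\bzero)\in\Lambda$ rather than $v\ge2$ and $\binom nh\ge2$, only fill in details the paper treats as evident.
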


\begin{proof}
Having equal domains is clearly an equivalence relation.
By \eqref{eq:t-definition}, the conditions $\domn\theta=\domn\eta$ and
$t(\theta,\eta)=0$ are equivalent, and \eqref{eq:alpha-sum} gives
$|\balpha|\le h$ in this case.
This gives the displayed set of relation labels.
By \cref{lem:closed-equivalence}, $C$ is a closed subset.
There are $v^h$ partial maps with domain equal to a fixed $h$-element subset,
and there are $\binom nh$ choices of domain.
If $d\ge1$, the partition of $X\times X$ contains a nonempty relation other
than $R_0$, so $v\ge2$.
Also, if $1\le h\le n-1$, then $\binom nh\ge2$.
Thus both each block and the set of blocks have more than one element,
and the equivalence relation and closed subset are nontrivial.

The relation between two blocks, that is, two $h$-element domains $B_1,B_2$,
is determined by $h-|B_1\cap B_2|$.
Under the identification $(t,\balpha)C\mapsto t$ of the relation labels,
the quotient association scheme is naturally isomorphic to $J(n,h)$.
For a fixed domain $B$, the partial maps in its block are naturally
identified with $X^B$, and the relation between two points is determined
by the numbers of coordinates at which their values form a pair in each $R_i$.
We order $B$ increasingly to identify $X^B$ with $X^h$, and identify
the relation labels by $(0,\balpha)\mapsto\balpha$.
Under these identifications, the block association scheme is naturally
isomorphic to $\frakX^{[h]}$, as defined in \cref{def:extension}.
This is the same extension as in \cite{BKZZ2025}.
\end{proof}

\begin{proposition}\label[proposition]{prop:dual-closed-subset}
Suppose that $d\ge1$ and $1\le h\le n-1$.
Then
\[
 C^*=\{(\bzero,k)\in \Lambda^\ast:0\le k\le\min\{h,n-h\}\}
\]
is the dual closed subset corresponding to the closed subset $C$ of
\cref{prop:imprimitive-blocks}.
More precisely,
\begin{equation}\label{eq:GJ-dual-projection}
 \frac1{v^h}
 \sum_{\substack{\balpha\in\NN^d\\|\balpha|\le h}}A_{0,\balpha}
 =
 \sum_{k=0}^{\min\{h,n-h\}}E_{\bzero,k}.
\end{equation}
Consequently, the linear span of the primitive idempotents indexed by $C^*$
is closed under the Hadamard product.
\end{proposition}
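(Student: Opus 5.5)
The plan is to verify \eqref{eq:GJ-dual-projection} directly on the tensor model $\CC^\Theta\cong\cH_h$. The uniqueness of $C^*$ in \eqref{eq:dual-closed-projection} then identifies the dual closed subset, and Hadamard closure follows from the general statement quoted from \cite{HigashitaniKurihara2026}. Here $p=\sum_{\balpha}\kappa_{0,\balpha}=v^h$ by \eqref{eq:block-size} and \cref{prop:imprimitive-blocks}, so the left side of \eqref{eq:GJ-dual-projection} is $M_C$.

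First I identify $M_C$ as a projection. The matrix $\sum_{|\balpha|\le h}A_{0,\balpha}$ is the $0$--$1$ matrix of the relation $\domn\theta=\domn\eta$. Under the identification of $e_\theta$ with $\bigotimes_a f_a(\theta)$, it acts on the summand $\bigotimes_a\cH_a(B)$ as $\bigotimes_{a\in B}J_X\otimes\bigotimes_{a\notin B}(1)$. Dividing by $v^h$ gives $\bigotimes_{a\in B}E_0$ on that summand. Thus $M_C$ is the orthogonal projection onto
\[
\bigoplus_{B\in\binom{[n]}{h}}\ \bigotimes_{a\in B}U_0\otimes\bigotimes_{a\notin B}\CC e_\star .
\]

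Next I compare this with the decomposition \eqref{eq:orthogonal-decomposition}. For $\bbeta=\bzero$ there are no sets $B^{(j)}$. The space above is exactly the one identified with $\CC^{\binom{[n]}{h}}$, with $u_0$ in the positions of $T$ and $e_\star$ elsewhere. Its decomposition under $J(n,h)$ is
\[
\bigoplus_{k=0}^{\min\{h,n-h\}}\mathsf W_{\bzero,k}.
\]
Therefore $M_C=\sum_k E_{\bzero,k}$, which is \eqref{eq:GJ-dual-projection}.

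As a cross-check, I can compare entries using \eqref{eq:projector-kernel}. For $\bbeta=\bzero$ the extension factor $\mu_{\bzero}^{(h-t)}\mathsf F_{d,d}=1$. The sum $\sum_k\mathsf H_k(n,h;t)/\binom nh$ equals $\binom nh\delta_{t0}$ times $1/\binom nh$, by the column identity $\sum_k E_k^{J(n,h)}=I$. Hence
\[
\sum_k (E_{\bzero,k})_{\theta,\eta}=v^{-h}\delta_{t0},
\]
which matches the left side entrywise.

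The step I expect to be the main obstacle is the bookkeeping in the tensor identification: checking that $\bigotimes_{a\in B}E_0$ is indeed the $\bbeta=\bzero$ part, so that no $U_j$ factor with $j\ge1$ appears. The entrywise computation avoids this issue, so I would use it as the primary proof and keep the tensor argument as intuition. The final claim, that the span of $E_{\bzero,k}$ is closed under the Hadamard product, is immediate from the cited equivalence once $C^*$ is known to be the dual closed subset.
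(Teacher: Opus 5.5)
Your proof is correct, and your primary (entrywise) argument is the paper's own: you specialize \eqref{eq:projector-kernel} to $\bbeta=\bzero$, where the extension factor equals $1$, use $\sum_k E_k^{J(n,h)}=I$ to get entries $v^{-h}\delta_{t0}$, and then invoke the uniqueness of $C^*$ and the cited Hadamard-closure equivalence. Your tensor-model argument is also valid, and the bookkeeping you flagged is not a real obstacle: by construction, the $\bbeta=\bzero$ summands are exactly $\bigoplus_B\bigotimes_{a\in B}U_0\otimes\bigotimes_{a\notin B}\CC e_\star$.
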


\begin{proof}
If $\bbeta=\bzero$, then $|\bbeta|=0$, $\mu_{\bzero}^{(h-t)}=1$, and
$\mathsf F_{d,d}(\bzero,-\balpha;-(h-t);\Omega^Q)=1$.
Thus \eqref{eq:projector-kernel} becomes
\[
 (E_{\bzero,k})_{\theta,\eta}
 =v^{-h}
 \bigl(E_k^{J(n,h)}\bigr)_{\domn\theta,\domn\eta}.
\]
Since the sum of the primitive idempotents of the Johnson association scheme is the identity matrix, we have
\[
 \sum_{k=0}^{\min\{h,n-h\}}E_{\bzero,k}
 =
 \frac1{v^h}
 \sum_{\substack{\balpha\in\NN^d\\|\balpha|\le h}}
 A_{0,\balpha}.
\]
Indeed, the $(\theta,\eta)$ entry on each side is $v^{-h}$ if
$\domn\theta=\domn\eta$, and $0$ otherwise.
The block size of the closed subset in \cref{prop:imprimitive-blocks} is $v^h$,
so the right-hand side is $M_C$ in \eqref{eq:dual-closed-projection}.
Hence $C^*$ is the dual closed subset
corresponding to this closed subset, and the linear span of its primitive
idempotents is closed under the Hadamard product.
\end{proof}

The order \eqref{eq:P-block-order} is of $1$-block type, and
\eqref{eq:Q-block-order} is of $d$-block type.
Thus $C$ and $C^*$ in \cref{prop:imprimitive-blocks,prop:dual-closed-subset}
correspond to the inheritance results in Theorems~4.6 and~4.7 of
Higashitani--Kurihara \cite{HigashitaniKurihara2026}.
We give their content explicitly in terms of our generators and normalizations.
The following proposition follows by applying these theorems to $\frakJ_{n,h}(\frakX)$,
so we omit its proof.

\begin{proposition}\label[proposition]{prop:block-quotient-polynomial}
Let $d\ge1$ and $1\le h\le n-1$, and fix $B\in\binom{[n]}h$.
Let $\Theta_B=\{\theta\in\Theta:\domn\theta=B\}$ be the corresponding block.
Write $A_{\balpha}^{[h]}$ and $E_{\bbeta}^{[h]}$ for the adjacency matrices
and primitive idempotents of $\Theta_B$, and $D_t$ and $E_k^J$ for those of
the quotient $J(n,h)$.
Then the following hold.
\begin{enumerate}[label=\textup{(\roman*)},ref=\textup{(\roman*)}]
\item
The block association scheme $\frakX^{[h]}$ is $d$-variate $P$- and $Q$-polynomial on
$D_h^{\mathrm{ext}}=\{\balpha\in\NN^d:|\balpha|\le h\}$ with respect to
$\preceq_{\grlex}$.
The respective generators are $A_{\be_i}^{[h]}$ $(1\le i\le d)$ and
$E_{\be_j}^{[h]}$ $(1\le j\le d)$.
The quotient $J(n,h)$ is univariate $P$- and $Q$-polynomial with respect to
the usual order on $0\le t,k\le\min\{h,n-h\}$, with generators $D_1$ and $E_1^J$.
Their eigenmatrices are
\begin{align}
 P_{\balpha}^{\mathrm{bl}}(\bbeta)&=P_{\balpha}^{[h]}(\bbeta),&
 Q_{\bbeta}^{\mathrm{bl}}(\balpha)&=Q_{\bbeta}^{[h]}(\balpha),
 \label{eq:block-eigenmatrices}\\
 P_t^{\mathrm{qt}}(k)&=\mathsf E_t(n,h;k),&
 Q_k^{\mathrm{qt}}(t)&=\mathsf H_k(n,h;t),
 \label{eq:quotient-eigenmatrices}
\end{align}
where the right-hand sides for the block are given by the multivariate
Krawtchouk polynomials in \eqref{eq:extension-P} and \eqref{eq:extension-Q}.
\item
Let $f_{t,\balpha}(x_0,x_1,\ldots,x_d)$ and
$g_{\bbeta,k}(y_1,\ldots,y_d,z)$ be the associated polynomials of
$\frakJ_{n,h}(\frakX)$; that is,
\[
\begin{aligned}
 A_{t,\balpha}&=f_{t,\balpha}(A_{1,\bzero},A_{0,\be_1},\ldots,A_{0,\be_d}),\\
 F_{\bbeta,k}&=g_{\bbeta,k}(F_{\be_1,0},\ldots,F_{\be_d,0},F_{\bzero,1}),
\end{aligned}
\]
where $F_{\bbeta,k}=|\Theta|E_{\bbeta,k}$ and the latter is evaluated using
the Hadamard product.
The associated polynomials of the block and the quotient can be taken, respectively, as
\begin{align}
 f_{\balpha}^{\mathrm{bl}}(\boldsymbol{x})
 &=f_{0,\balpha}(0,\boldsymbol{x}),
 \label{eq:block-P-polynomials}\\
 f_t^{\mathrm{qt}}(x)
 &=\frac1{v^h}\sum_{\substack{\balpha\in\NN^d\\|\balpha|\le h-t}}
    f_{t,\balpha}(vx,hk_1,\ldots,hk_d),
 \label{eq:quotient-P-polynomials}\\
 g_{\bbeta}^{\mathrm{bl}}(\boldsymbol{y})
 &=\frac1{\binom nh}
   \sum_{k=0}^{\min\{h-|\bbeta|,n-h\}}
   g_{\bbeta,k}\left(\frac nh y_1,\ldots,\frac nh y_d,n-1\right),
 \label{eq:block-Q-polynomials}\\
 g_k^{\mathrm{qt}}(z)&=g_{\bzero,k}(0,\ldots,0,z).
 \label{eq:quotient-Q-polynomials}
\end{align}
On the $Q$ side, the variables are replaced by $v^hE_{\be_j}^{[h]}$ for the
block and by $\binom nh E_1^J$ for the quotient.
\end{enumerate}
\end{proposition}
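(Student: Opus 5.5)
Part (i) is assembled from results already in hand. By \cref{prop:imprimitive-blocks}, the block $\Theta_B$ with labels $(0,\balpha)\mapsto\balpha$ is $\frakX^{[h]}$. This scheme is $d$-variate $P$- and $Q$-polynomial on $D_h^{\mathrm{ext}}$ with respect to $\preceq_{\grlex}$ by \cite{BKZZ2025}, as recalled in \cref{sec:factorization-prelim}, with generators $A^{[h]}_{\be_i}$ and $E^{[h]}_{\be_j}$. Its eigenmatrices are \eqref{eq:extension-P} and \eqref{eq:extension-Q}, which gives \eqref{eq:block-eigenmatrices}. The quotient is $J(n,h)$, whose eigenmatrices are \eqref{eq:Johnson-eigenmatrices}, giving \eqref{eq:quotient-eigenmatrices}; it is univariate $P$- and $Q$-polynomial since $T=\min\{h,n-h\}\ge1$. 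The only thing to check is that the idempotent labels match: the restriction of $E_{\bbeta,k}$ summed over $k$ should give the block idempotents, and $E_{\bzero,k}$ should give $E_k^J$. I will verify both directly from the kernel formulas in the next step.

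For (ii), I will restrict and project the polynomial identities of $\frakJ_{n,h}(\frakX)$, with each formula tied to one operation.

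\emph{Block $P$-side.} Restricting to $\Theta_B\times\Theta_B$, the matrix $A_{1,\bzero}$ vanishes because it joins different domains. Each $A_{0,\balpha}$ is block diagonal, with block $A_{\balpha}^{[h]}$. The algebra of block-diagonal matrices (those with $t=0$) is a subalgebra, and restriction to one block is multiplicative on it. Since $\preceq_P$ is of $1$-block type, every monomial in $f_{0,\balpha}$ that contains $x_0$ is preceded by $(0,\balpha)$, which forces its $t$-coordinate to be $0$; so no $x_0$ appears at all. Hence $A_{0,\balpha}=f_{0,\balpha}(A_{1,\bzero},A_{0,\be_i})$ involves only the $A_{0,\be_i}$, and restricting gives \eqref{eq:block-P-polynomials}.

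\emph{Quotient $P$-side.} Compress to blocks using the map $\Pi$ that sums over a block. On the block quotient one has $A_{1,\bzero}\mapsto vD_1$, because each neighbouring block contributes $v$ points, and $A_{0,\be_i}\mapsto hk_i I$, by row sums. Compression is an algebra homomorphism on the Bose--Mesner algebra, since every element commutes with the block-averaging projection $M_C$ of \cref{prop:dual-closed-subset}. Summing $A_{t,\balpha}$ over $\balpha$ gives $v^hD_t$. This yields \eqref{eq:quotient-P-polynomials}.

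\emph{Block $Q$-side.} By \eqref{eq:projector-kernel}, the restriction of $E_{\bbeta,k}$ to a block is a scalar multiple of $E^{[h]}_{\bbeta}$ when $t=0$. So $F_{\be_j,0}|_B=\tfrac{h}{n}v^hE^{[h]}_{\be_j}$, using $C_{\be_j}=n/h$ and $\mathsf H_0=1$. Also $F_{\bzero,1}|_B=\mathsf H_1(n,h;0)J=(n-1)J$. Hadamard products commute with restriction, so each $g_{\bbeta,k}$ evaluates correctly on the block. Then $\sum_k F_{\bbeta,k}|_B$ equals $\binom nh v^hE^{[h]}_{\bbeta}$: \eqref{eq:projector-kernel} at $t=0$ together with $\sum_k d_k^{N}=\binom{N}{H}$ shows that the constant $C_{\bbeta}\binom{n-|\bbeta|}{h-|\bbeta|}/\bigl(\binom{n-|\bbeta|}{h-|\bbeta|}\bigr)$ collapses to $\binom nh$. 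Dividing by $\binom nh$ gives \eqref{eq:block-Q-polynomials} in the variables $v^hE_{\be_j}^{[h]}$.

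\emph{Quotient $Q$-side.} Work in the Hadamard subalgebra spanned by $E_{\bzero,k}$, which is closed by \cref{prop:dual-closed-subset}. There $F_{\bzero,k}$ is the inflation of $\binom nh E_k^J$, and inflation is a Hadamard homomorphism. Since $\preceq_Q$ is of $d$-block type, $g_{\bzero,k}$ contains no $y$-variables, so setting $y=0$ does nothing and \eqref{eq:quotient-Q-polynomials} follows.

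I expect the main obstacle to be the two block-type arguments that some variables are absent. These rely on the leading-term and support conditions in \ref{item:p-polynomial-representation}, combined with the elimination property of \cref{def:elimination-block-order}. The point is that every monomial exponent must lie in $\Lambda$ and be $\preceq$ the leading exponent. Rather than rederive this, I would cite Theorems~4.6 and~4.7 of \cite{HigashitaniKurihara2026}.
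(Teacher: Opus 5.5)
Your argument is correct, but it takes a different route from the paper's. The paper omits the proof. It simply applies the inheritance theorems of \cite{HigashitaniKurihara2026} (Theorems~4.6 and~4.7) for block and quotient schemes under orders of block type, using the closed subset $C$ and dual closed subset $C^*$ from \cref{prop:imprimitive-blocks,prop:dual-closed-subset}.

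You instead verify each of the four formulas by hand:
\begin{itemize}
\item restriction to $\Theta_B$ on the subalgebra of $t=0$ matrices;
\item compression to blocks, which is a homomorphism on the Bose--Mesner algebra of $\frakJ_{n,h}(\frakX)$ because that algebra is commutative and contains the block-averaging projection $M_C$;
\item restriction of Hadamard identities;
\item inflation from $J(n,h)$.
\end{itemize}
You read off the constants $v$, $hk_i$, $n/h$, $n-1$ and $\binom nh$ from \cref{lem:generator-products} and \eqref{eq:projector-kernel}. The citation buys brevity and generality. Your computation buys a self-contained check of exactly the normalizations the proposition displays.

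The ``main obstacle'' you defer to the citation is in fact immediate. Since $\preceq_P$ compares $t$ first and $\preceq_Q$ compares $\bbeta$ first, every exponent $\preceq_P(0,\balpha)$ has $t=0$, and every exponent $\preceq_Q(\bzero,k)$ has $\bgamma=\bzero$. The leading-term condition for your derived polynomials is also automatic. Their supports lie in the relevant index sets, and in a multivariate $P$- or $Q$-polynomial scheme the monomials indexed by such a set form a basis.

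Two computational slips need fixing.
\begin{itemize}
\item At $t=0$, \eqref{eq:projector-kernel} gives $F_{\bbeta,k}|_{\Theta_B}=C_{\bbeta}\,d_k^{n-|\bbeta|}\,v^hE^{[h]}_{\bbeta}$. So $F_{\be_j,0}|_{\Theta_B}=\frac nh\,v^hE^{[h]}_{\be_j}$, not $\frac hn$. With $\frac hn$ you would get the substitution $\frac hn y_j$, contradicting \eqref{eq:block-Q-polynomials}.
\item After summing over $k$, the constant is $C_{\bbeta}\binom{n-|\bbeta|}{h-|\bbeta|}=\binom nh$. The expression you wrote carries a spurious denominator and reduces to $C_{\bbeta}$.
\end{itemize}
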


\section*{Acknowledgments}
The author is supported by JSPS KAKENHI Grant Number JP24K06830.

\bibliographystyle{gjplain}
\bibliography{ref}

\end{document}